\definecolor{kugray5}{RGB}{224,224,224}
\newtheorem{theorem}{Theorem}[section]
\newtheorem{proposition}[theorem]{Proposition}
\newtheorem{conjecture}[theorem]{Conjecture}
\newtheorem{corollary}[theorem]{Corollary}
\newtheorem{lemma}[theorem]{Lemma}
\theoremstyle{definition}
\newtheorem{definition}[theorem]{Definition}
\newtheorem{remark}[theorem]{Remark}
\newtheorem{example}[theorem]{Example}
\newtheorem{proposition-definition}[subsection]
{Proposition-Definition}
\theoremstyle{definition}
\newcommand\cE{\mathcal{E}}
\newcommand\cF{\mathcal{F}}
\newcommand\cO{\mathcal{O}}
\newcommand\cP{\mathcal{P}}
\newcommand\cX{\mathcal{X}}
\newcommand\cZ{\mathcal{Z}}
\newcommand\bF{\mathbf{F}}
\newcommand\bS{\mathbf{S}}
\newcommand\rD{\mathrm{D}}
\newcommand\rE{\mathrm{E}}
\newcommand\rH{\mathrm{H}}
\newcommand\rI{\mathrm{I}}
\newcommand\rO{\mathrm{O}}
\newcommand\rP{\mathrm{P}}
\newcommand\rR{\mathrm{R}}
\newcommand\rS{\mathrm{S}}
\newcommand\rT{\mathrm{T}}
\newcommand\rU{\mathrm{U}}
\newcommand\rW{\mathrm{W}}
\newcommand\fX{\mathfrak{X}}
\DeclareMathOperator \ch{ch}
\DeclareMathOperator \td{td}
\DeclareMathOperator \id{id}
\newcommand\CC{\mathbb{C}}
\newcommand\DD{\mathbb{D}}
\newcommand\PP{\mathbb{P}}
\newcommand\QQ{\mathbb{Q}}
\newcommand\RR{\mathbb{R}}
\newcommand\ZZ{\mathbb{Z}}
\newcommand\bfv{\mathbf{v}}
\newcommand\fG{\mathfrak{G}}
\newcommand\Aut{{\rm Aut}}
\newcommand\Bir{{\rm Bir}}
\newcommand\rank{{\rm rank}}
\newcommand\CH{{\rm CH}}
\newcommand\rk{{\rm rk}}
\newcommand\Pic{{\rm Pic}}
\newcommand\alg{{\mathrm{alg}}}
\newcommand\NS{{\mathrm{NS}}}
\newcommand\Br{{\mathrm{Br}}}
\newcommand\srM{\mathscr{M}}
\newcommand\srX{\mathscr{X}}
\newcommand\srY{\mathscr{Y}}
\newcommand\fro{\mathfrak{o}}
\newcommand\cf{{\it cf}}
\title{Bloch's conjecture for (anti-)autoequivalences on  K3 surfaces}
\author{Zhiyuan Li}
\address{Zhiyuan Li, Shanghai Center for Mathematical Sciences, Fudan University, Jiangwan Campus, Shanghai, 200438, China}
\email{zhiyuan\_li@fudan.edu.cn}
\author{Xun Yu}
\address{Xun Yu,  Center for Applied Mathematics and KL-AAGDM, Tianjin University, Weijin Road 92, Tianjin 300072, P.R. China}
\email{xunyu@tju.edu.cn}
\author{Ruxuan Zhang}
\address{Ruxuan Zhang,  Beijing International Center for Mathematical Research, Peking University, No. 5 Yiheyuan Road Haidian District, Beijing, P.R.China 100871}
\email{rxzhang@pku.edu.cn}
\date{July 2023}
\begin{document}

\maketitle

\begin{abstract}

In this paper, we investigate Bloch's conjecture for autoequivalence on K3 surfaces. We introduce the notion of reflective autoequivalence of twisted K3 surfaces and  prove Bloch's conjecture for such autoequivalences, thereby confirming the conjecture for all (anti-)symplectic autoequivalences of K3 surfaces with Picard number at least $3$. The main idea is that we find a Cartan-Dieudonn\'e type decomposition of (anti)-symplectic autoequivalences. 

Our findings have several interesting consequences.
Firstly, we verify Bloch's conjecture for (anti-)symplectic birational automorphisms of Bridgeland moduli space on a K3 surface with Picard number at least $3$.  This notably implies that Bloch's conjecture holds for (anti)-symplectic birational automorphisms of finite order ($\neq 2,4$) on arbitrary  hyper-K\"ahler varieties of $K3^{[n]}$-type. Secondly, we extend Huybrechts' work in \cite{Huy12} to  twisted K3 surfaces. This extension enables us to affirm Bloch's conjecture for symplectic birational automorphisms on any hyper-Kähler variety of $K3^{[n]}$-type preserving a birational Lagrangian fibration. Finally, we prove the constant cycle property for the fixed loci of anti-symplectic involutions on hyper-Kähler varieties of $K3^{[n]}$-type, provided that $n \leq 2$ or the invariant sublattice has rank greater than $1$.
\end{abstract}

\section{Introduction}

\subsection{Bloch's conjecture for (anti)-autoequivalence on $K3$ surfaces} Let $X$ be a smooth projective $K3$ surface. Let $\Phi:\rD^b(X)\to \rD^b(X)$ be a derived autoequivalence.  The goal of this paper is to study the  Bloch's conjecture for $\Phi$.   Namely, there are  natural actions
%Huybrechts has studied the action of  symplectic autoequivalences in \cite{Huy12}. Namely, let $\Phi:\rD^b(X)\to \rD^b(X)$ be a derived autoequivalence,  there is a  natural map
 \begin{equation}
     \Phi^{\CH}:\CH_0(X)_{\hom}\to \CH_0(X)_{\hom},~\hbox{and} ~\Phi^{\rm tr}:\rT(X)\to \rT(X)
 \end{equation}
induced by the Fourier-Mukai kernel of $\Phi$, where $\rT(X)\subseteq \rH^2(X,\ZZ)$ is the transcendental lattice. Then 
 \begin{conjecture}[Bloch's conjecture for autoequivalences]\label{conj2}
   For $\Phi\in \Aut(\rD^b(X))$, then $\Phi^{\rm tr}= \pm \id $  if and only if $\Phi^{\CH}=\pm \id$.  
\end{conjecture}

When  $\Phi$ is induced from a symplectic automorphism, this conjecture has been confirmed in the following cases:
\begin{itemize}
    \item  (Huybrechts \cite{Huy12}):  $\rank_2(\NS(X))\geq 4$ and $\rank_3(\NS(X))\geq 3$,
    \item (Voisin \cite{Voi12}, Huybrechts \cite{HK13}\cite{Huy12'}) $\Phi$ has finite order,
    \item (Du-Liu \cite{DL22}) $X$ is an elliptic $K3$ surface and  $\Phi$ preserves the elliptic fibration. 
\end{itemize} 
Their proofs employ entirely different techniques. In this paper, we will use a unified method to generalize their results. One of our main results is

\begin{theorem}\label{thm1}
Let $\widetilde{\NS}(X)=\ZZ\oplus \NS(X)\oplus \ZZ$ be the extended N\'eron-Severi lattice of $X$.  Let $\Phi$ be an (anti-)symplectic autoequivalence of $\rD^b(X)$.  If the action of $\Phi$ on  $\widetilde{\NS}(X)$  $$\Phi^{\widetilde{\NS}}: \widetilde{\NS}(X)\to \widetilde{\NS}(X)$$
is a product of reflective involutions, then  $\Phi^{\CH}=\mathrm{id}$ (resp. $-\mathrm{id}$). 
\end{theorem}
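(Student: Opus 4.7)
The plan is to decompose the action of $\Phi$ into building blocks coming from spherical twists, and then analyze the residual autoequivalence whose action on the Mukai lattice is cohomologically trivial.

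The first step is to observe that a reflexive involution of $\widetilde{\NS}(X)$ is a reflection $s_v$ in an algebraic $(-2)$-class $v \in \widetilde{\NS}(X)$. By Mukai's theory, such $v$ is the Mukai vector of a spherical object $E_v \in \rD^b(X)$, and the spherical twist $T_{E_v}$ acts on the full Mukai lattice $\widetilde{\rH}(X)$ exactly as $s_v$. By Huybrechts' theorem in \cite{Huy12}, each $T_{E_v}^{\CH}$ is the identity on $\CH_0(X)_{\hom}$. Writing the hypothesis as $\Phi^{\widetilde{\NS}} = s_{v_1} \circ \cdots \circ s_{v_k}$ and setting
$$\Psi := \Phi \circ T_{E_k}^{-1} \circ \cdots \circ T_{E_1}^{-1},$$
one obtains an (anti-)symplectic autoequivalence with $\Psi^{\CH} = \Phi^{\CH}$ and $\Psi^{\widetilde{\NS}} = \mathrm{id}$. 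The (anti-)symplectic hypothesis combined with the triviality of $\Psi^{\widetilde{\NS}}$ forces $\Psi^{\widetilde{\rH}} = \pm\mathrm{id}$ on the full Mukai lattice, and the theorem reduces to proving $\Psi^{\CH} = \pm\mathrm{id}$ for such a cohomologically trivial $\Psi$.

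For this residual step I would follow the Chow-theoretic Mukai vector analysis of \cite{Huy12}, adapted to incorporate the anti-symplectic sign. The Fourier-Mukai kernel $\cE_\Psi \in \rD^b(X \times X)$ satisfies $v^{\mathrm{coh}}(\cE_\Psi) = \pm v^{\mathrm{coh}}(\cO_{\Delta_X})$, and the plan is to lift this to an equality in the Beauville-Voisin ring of $\CH^*(X \times X)$ modulo correction classes of decomposable type $\mathrm{pr}_1^*\alpha \cdot \mathrm{pr}_2^*\beta$. Such decomposable classes act as zero on $\CH_0(X)_{\hom}$, which gives the desired $\Psi^{\CH} = \pm\mathrm{id}$.

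The main obstacle is this final step: converting cohomological triviality to Chow-theoretic triviality for $\Psi$ is the essential Bloch-type content, independent of the Weyl-group reduction, and requires controlling how the Chow-level lift of the Mukai vector interacts with the Beauville-Voisin ring. A secondary difficulty is the consistent tracking of the symplectic sign through the factorization into spherical twists and through the anti-autoequivalence case, where shifts by $[1]$ flip the Mukai vector and the kernel analysis must be carried out compatibly with the duality involved in the anti-autoequivalence setup.
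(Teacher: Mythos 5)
Your proposal has a genuine gap at the very first step: you identify ``reflexive involution'' with ``reflection in an algebraic $(-2)$-class.'' That is not what the paper means by the term. By Definition \ref{def:rde}, a reflexive involution is an isometry $g$ with $g(v)=-v$ and $g|_{v^\perp}=\id$ for a primitive $v$ with $v^2=\pm 2$, \emph{or} $g(v)=-v$ and $g|_{v^\perp/v}=\id$ for a primitive isotropic $v$ (the Huybrechts/Eichler-type involutions $\psi_{b,a}$). Only the $v^2=-2$ case is realized by a spherical twist. The $v^2=+2$ reflections reverse the orientation of the positive cone and are realized only by anti-autoequivalences; the isotropic case is not a reflection at all. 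The entire point of Theorem \ref{thm1} together with Section \ref{sec:kneser} is that $(-2)$-reflections alone do \emph{not} generate $\widetilde{\rO}^+(\widetilde{\NS}(X))$ (see the $2$-adic obstructions in Example \ref{exm:counter}), so one must enlarge the generating set. Your reduction $\Psi:=\Phi\circ T_{E_k}^{-1}\circ\cdots\circ T_{E_1}^{-1}$ therefore does not produce a cohomologically trivial residual autoequivalence in general, and what survives of your argument is exactly Huybrechts' already-known case.

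The new content of the theorem is precisely the treatment of the other two types, which your proposal does not address and which cannot be handled by kernel/Beauville--Voisin ring analysis alone. For $v^2=2$ the paper fixes a $v$-generic stability condition invariant under a lift $\Psi$ of $-\Phi^{\widetilde{\rH}}$ (Lemma \ref{fixedsc}), passes to the moduli space $M_\sigma(X,v)$, shows the induced automorphism acts trivially on $\rH^2$ (Lemma \ref{lem:cohom}, with a sign $\epsilon=-1$ coming from the derived dual), hence is the identity by global Torelli for $K3^{[n]}$-type, and then descends back to $\CH_0(X)_{\hom}$ via Lemma \ref{modulitok3}. For isotropic $v$ it invokes Theorem \ref{lagk3}, which ultimately rests on the finite-order results of Voisin--Huybrechts and the elliptic-fibration result of Du--Liu. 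Your ``residual step'' (cohomological triviality implies Chow triviality) is genuinely needed, but the paper simply cites Huybrechts' injectivity theorem (Theorem \ref{thm:aut-inj}) rather than re-deriving it; also note that in the anti-symplectic case the residual action is $+\id$ on $\widetilde{\NS}$ and $-\id$ on $\rT(X)$, so it is not $\pm\id$ on the full Mukai lattice as you assert, and the sign bookkeeping must go through the derived dual functor $\DD$ as in Lemma \ref{injanti}.
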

 Our method also applies to autoequivalence of twisted $K3$ surfaces. See Theorem \ref{thm:lagk3} for the full statement.  Here, the \textit{reflective involutions} are defined in \S \ref{subsec:ref-aut}, encompassing reflections of square $\pm 2$ vectors. The remaining problem is whether every action $\Phi^{\widetilde{\NS}}$ can be decomposed  in this way. This can be viewed as  Cartan-Dieudonn\'e type decomposition, which is a generalization of an old question of Weil \cite{W79}. We resolve this problem by extending Kneser's work \cite{Kn81} to arbitrary even lattices. This  confirms Conjecture 1.1 for arbitrary K3 surfaces with Picard number at least $3$.

%Kneser  \cite{Kn81} has studied the generators of $\widetilde{\rO}^+(\widetilde{\NS}(X))$ via using reflections around vectors of square $-2$. 
%Let $\widetilde{\rO}^+(\widetilde{\NS}(X))\subseteq \rO(\widetilde{\NS}(X) )$ be the subgroup consisting of elements which act trivially on the discriminant group of $\widetilde{\NS}(X)$ and preserve the orientation.  Thanks to the main theorem in \cite{HMS09}, the image of the  map \begin{equation}   \Aut^s(\rD^b(X)) \to \rO(\widetilde{\NS}(X))\end{equation}is  $\widetilde{\rO}^+(\widetilde{\NS}(X))$.  Let $\rW(X)\subseteq \widetilde{\rO}(\widetilde{\NS}(X)) $ be the subgroup generated by reflective involutions.  By Theorem \ref{thm1},  Conjecture \ref{conj2} is reduced to the following problem

\begin{theorem}\label{thm:thm2}
  If  $\rank~\NS(X)\geq 3$ or $X$ admits a Jacobian fibration, then   Conjecture \ref{conj2} holds. 
\end{theorem}

Let us say a few words for the case $\rank (\NS(X))\leq 2$. When $X$ has Picard number one, Conjecture \ref{conj2} is quite subtle. Indeed, there  are  infinitely many deformation types of polarized $K3$ surfaces with autoequivalences whose action on $\widetilde{\NS}(X)$ is not a product of reflective involutions. When $X$ has Picard number two, one can confirm Conjecture \ref{conj3} for $\NS(X)$ with small determinant with the aid of computer. In this situation, it remains unclear whether Theorem \ref{thm1} provided an adequate basis.

\subsection{Bloch's conjecture for hyper-K\"ahler varieties}
The first application of Theorem \ref{thm1} is to detect the Beauville-Voisin filtrations on hyper-K\"ahler varieties. 
Let $Y$ be a smooth projective hyper-K\"ahler variety of dimension $2n$ over $\CC$. Beauville and Voisin predicted that there is an increasing splitting filtration $\bF_{\bullet}^{\rm BV}\CH^\ast(Y)$, which serves as an opposite filtration to the conjectural Bloch-Beilinson (BB) filtration. The existence of such filtrations is mysterious in general.  For zero-cycles, it predicts that  there are subgroups
$\CH_{0}(Y)_{i}\subset \CH_0(Y)$ such that 
\begin{equation}\label{splitting}
    \bF^{\rm BV}_{i}\CH_0(Y)=\bigoplus_{l=0}^i\CH_{0}(Y)_{l}.
\end{equation}
 In \cite{Voi16}, Voisin  provided an increasing filtration defined by $$\bS_{i}\CH_0(Y)=\left<y\in Y|~\dim O_y\geq n-i\right>,$$ where $O_y\subset Y$ are points rationally equivalent to $y$. She has conjectured that $$\bF_{\bullet}^{\rm BV}\CH_0(Y)=\bS_{\bullet}\CH_0(Y).$$  
In recent years, there are a lot of study of Voisin's filtration when $Y$ is a Bridgeland moduli space (cf.~\cite{SYZ20, BFMS19,vial22,LZ22,lzz24}).  For instance,  there is an explicit description of the splitting $\CH_0(Y)_{\bullet}$, see Section \ref{sec:bloch-hk} for details.  

%and $\phi_*$ preserves $\bS_i$ by \cite[Corollary 3.3]{LZ22}.

%The splitting gives the BB filtration in the opposite direction.

%and there is a so-called Beauville-Voisin (BV) filtration on $\CH^\ast(Y)$  This conjectural BB filtration, denoted as $$\bF^\bullet\CH^\ast (X)\subseteq \CH^\ast (X),$$ adheres to certain natural axioms, suggesting that the graded piece $ \mathfrak{gr}^i_F\CH_0(X)$ should be dictated by the transcendental component of $\rH^{i}(X,\QQ)$. 

%To be more precisely, there should be $\CH_0(Y)_{2s}\subset \CH_0(Y)$ for $i=0,1,\ldots, n$ such that 
%\begin{equation}\label{split}    \rS_{s}\CH_0(Y)=\bigoplus_{i\leq s}^n\CH_{0}(Y)_{2i}\end{equation}
%and the filtration defined by $$\rF^{2s-1}\CH_0(Y)=\rF^{2s}\CH_0(Y)\cong \bigoplus_{i\geq s}^n\CH_{0}(Y)_{2i}$$ is conjectured to satisfy the axioms of the BB filtration. 
%The components $\CH_0(Y)_{2s}$ can be described explicitly when $Y$ is a moduli space of stable objects on some $K3$ surface. However, in general the splitting of $\rS_\bullet$ remains unknown. Axioms of the BB filtration implies that
%which is a consequence of the  Beauville-Voisin conjecture (see also  Note that any automorphism preserves the filtration $\rS_{s}\CH_0(Y)$, but for birational automorphisms, this is only known for moduli space of stable objects on a $K3$ surface  at present (c.f. \cite[Corollary 3.3]{LZ22}).

A more tractable conjecture to test the existence of such filtration is  

\begin{conjecture}[Bloch's conjecture for (anti)-symplectic birational automorphism]
\label{conj3}
 Let $Y$ be a smooth projective hyper-K\"ahler variety.  Let $\phi\in \mathrm{Bir}(Y)$  be a  birational automorphism. Then  $\phi_*$ preserves $\CH_{0}(Y)_{\bullet}$ and 
  the induced map $$\phi_i^\CH:\CH_{0}(Y)_{i}\rightarrow \CH_{0}(Y)_{i}$$ is $ (\pm 1)^i\id$ if and only if the map $\phi^\ast:\rH^0(Y, \Omega_Y^2)\to \rH^0(Y,\Omega_Y^2)$ is $\pm \id$. 

\end{conjecture}

By using the result of \cite{MZ20} and Bridgeland stability, we establish a correspondence between Conjecture \ref{conj3} and Conjecture \ref{conj2}, leading to the following result 
\begin{theorem}\label{thm:blochHK} 
If $Y$ is of $K3^{[n]}$-type,  then Conjecture \ref{conj3} holds  if one of the following conditions holds
\begin{enumerate}
\item [(1)]  $Y$ is a moduli space of stable objects on some $K3$ surface $X$ and $\rank(\NS(Y))\geq 4$.
\item [(2)] $\phi$  is symplectic and preserves a birational Lagrangian fibration on $Y$. 
\end{enumerate}
\end{theorem}

\begin{remark}
Our proof in fact gives a stronger result  under a weaker hypotheses; we refer to Theorem \ref{thm:bloch-HK-strong}  of the main text for the full statement.
\end{remark}

For finite order birational automorphisms, we have the following strengthening

\begin{theorem}[Bloch's conjecture for finite order birational automorphism]\label{thm:bloch-finite}
   Assume that $Y$ is of  $K3^{[n]}$-type and $\phi$ has finite order. Then  Conjecture \ref{conj3} holds for $\phi$ if one of the following conditions holds
   \begin{enumerate}
       \item $\mathrm{ord}(\phi)\neq 2,4$; 
      % \item  $\phi$ is an anti-symplectic birational involution and $n=2$;
       \item $\phi$ is a symplectic birational involution and its coinvariant Markman-Mukai sublattice ${\Lambda}(Y)_\phi \ncong E_8(-2)$. 
   \end{enumerate} 
\end{theorem}
A very interesting problem is to  prove the Bloch's conjecture for symplectic birational involutions with coinvariant  sublattice  $E_8(-2)$.

\begin{comment}
    Finally, we want to mention that our results  indicate Conjecture \ref{conj2} can be deduced from the generalized Franchetta conjecture on the universal family of self-product of $\Sigma$-lattice-polarized $K3$ surfaces with $\rank(\Sigma)\leq 2$.  The generalized Franchetta conjecture for lattice-polarized $K3$ surfaces and hyper-K\"ahler varieties have been studied in a series of work \cite{FLV19, Beauville2021, BL19, LZ22}.  See \S \ref{subsec:Franchetta} for more details. 
\end{comment}

 \subsection{Constant cycle Lagrangian subvariety} 
Another application is the construction of constant cycle Lagrangian subvarieties.    
Recall that a subvariety $j:Z\hookrightarrow Y$ is called a constant cycle subvariety if the image $$j_*(\CH_0(Z))\subset \CH_0(Y)$$ is constant.  If $\dim Z=\frac{1}{2}\dim Y$, then $Z$ must be Lagrangian (\cf.~ \cite[Corollary 1.2]{Voi16}), meaning that the restriction of the holomorphic $2$-form to $Z$ vanishes. Such a $Z$ is known as a constant cycle Lagrangian subvariety. In \cite{Voi16}, 
Voisin predicts that any hyper-K\"ahler variety $Y$ should contain a constant cycle Lagrangian subvariety. 

A natural source of such subvarieties is the fixed loci of an anti-symplectic involution on $Y$. In \cite{Bea11},  Beauville has proved in \cite[Lemma 1]{Bea11} that the fixed loci of such involutions are smooth Lagrangian subvarieties. Recently, the geometry of its connected component has been studied \cite{FMOS22}. Though the fixed loci itself might have a large Chow group,   Conjecture \ref{conj3} and \cite[Conjecture 2.11]{MR4370470} forces it to be a constant cycle  subvariety in $Y$. A direct consequence of Theorem \ref{thm1} is 

\begin{theorem}\label{thm:lag-cons}
Let $Y$ be a smooth projective hyper-Kähler variety of $K3^{[n]}$-type, and let $\iota\in \Bir(Y)$ be an anti-symplectic birational involution. Let $\NS(Y)^\iota\subseteq \NS(Y)$ be the invariant sublattice. Then the fixed loci of $\iota$ is a constant cycle Lagrangian  subvariety if $\rank(\NS(Y)^{\iota})\geq 2$ or $n\leq 2$.
\end{theorem}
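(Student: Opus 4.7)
The plan is to reduce the constant-cycle Lagrangian property of $F:=Y^{\iota}$ to the anti-symplectic case of Conjecture~\ref{conj3}, which I would verify for $(Y,\iota)$ under either stated hypothesis. By Beauville's lemma \cite{Bea11}, $F$ is already a smooth Lagrangian subvariety of pure dimension $n$, so the only missing ingredient is that any two points of $F$ are rationally equivalent in $Y$.

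First I would verify Conjecture~\ref{conj3} for $\iota$. Under the hypothesis $\rank(\NS(Y)^{\iota_*})\geq 2$, the rank-$2$ invariant N\'eron--Severi sublattice contains enough isotropic directions that, via the Bayer--Macr\`i description of $K3^{[n]}$-type hyper-K\"ahlers, one may (after a birational modification commuting with $\iota$) realize $Y$ as a Bridgeland moduli space on a K3 surface $X$ and $\iota$ as an (anti-)autoequivalence of $\rD^b(X)$ whose action on $\widetilde{\NS}(X)$ is a product of reflexive involutions; Theorem~\ref{thm:blochHK}(i) then applies. Alternatively, an invariant isotropic class produces an $\iota$-invariant birational Lagrangian fibration, allowing one to invoke Theorem~\ref{thm:bloch-LF} (the anti-symplectic strengthening of Theorem~\ref{thm:blochHK}(ii)). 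Under the hypothesis $n\leq 2$: for $n=1$ this is the Voisin--Huybrechts theorem on anti-symplectic involutions of K3 surfaces \cite{Voi12, Huy12'}, and for $n=2$ one uses the classification of anti-symplectic involutions on $K3^{[2]}$-type manifolds to realize $\iota$ as an (anti-)autoequivalence of a K3 surface and again appeals to Theorem~\ref{thm:blochHK}(i).

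Given Conjecture~\ref{conj3} for $\iota$, the map $\iota_{2s}^{\CH}$ equals $(-1)^s\id$ on each Beauville--Voisin graded piece $\CH_0(Y)_{2s}$. Since $\iota$ fixes $[x]$ for every $x\in F$, the components of $[x]$ in $\CH_0(Y)_{2s}$ vanish for every odd $s$. When $n=1$, this immediately gives $[x]=c_Y$, the Beauville--Voisin class, for all $x\in F$, so $F$ is constant cycle. When $n=2$, one obtains $[x]=c_Y+x_4$ with $x_4\in \CH_0(Y)_4$; combining the Lagrangian property $\sigma|_F=0$ with a Bloch--Srinivas--Mumford argument together with the multiplicative Chow--K\"unneth decomposition of $K3^{[2]}$-type $Y$ (Shen--Vial) forces $x_4$ to be independent of $x\in F$, hence $F$ is constant cycle. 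In the $\rank\geq 2$ case for higher $n$, the invariant birational Lagrangian fibration produced above supplies additional relations killing the remaining higher even-index components.

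The main obstacle will be the reduction under the rank hypothesis: producing a birational realization of $(Y,\iota)$ as a Bridgeland moduli space on a K3 surface together with an (anti-)autoequivalence of reflexive type requires a delicate lattice-theoretic choice of isotropic vectors inside the invariant sublattice and a careful check of the reflexivity condition of \S\ref{sec:ref}. A subtler technical point, specific to $n=2$, is rigorously upgrading the vanishing of the odd-index Beauville--Voisin components of $[x]$ to the full constant-cycle statement, which is where the Lagrangian hypothesis on $F$ is genuinely needed.
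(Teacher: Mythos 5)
Your overall strategy has a genuine gap at its core: you propose to deduce the constant-cycle property from Conjecture~\ref{conj3}, i.e., from $\iota_{2s}^{\CH}=(-1)^s\id$ on the graded pieces $\CH_0(Y)_{2s}$. But for a point $x$ in the fixed locus this only forces the \emph{odd}-index components of $[x]$ to vanish; the even-index components $x_4, x_8,\dots$ are fixed by $(-1)^s\id$ and a priori vary with $x$. You acknowledge this for $n=2$ and wave at a ``Bloch--Srinivas--Mumford argument'' plus ``additional relations'' from the invariant Lagrangian fibration for higher $n$, but neither is substantiated, and this is exactly the hard point (the paper itself notes in the introduction that constancy of the fixed locus requires Conjecture~\ref{conj3} \emph{together with} a further conjecture of Vial). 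The paper avoids the graded decomposition entirely: after realizing $Y$ as a Bridgeland moduli space $M_\sigma(\srX,v)$ and lifting $\iota$ to an (anti-)autoequivalence $\Phi$ via Proposition~\ref{mortoder}, it shows that the invariant part of the Markman--Mukai lattice is an indefinite $2$-elementary lattice of rank $>1$, hence isotropic, so Theorem~\ref{lagk3} gives the much stronger statement $\Phi^{\CH}=-\id$ on $\CH_0(X)_{\hom}$ of the underlying K3. Then for two fixed objects $F_1,F_2$ one gets $c_2(F_1)-c_2(F_2)=-(c_2(F_1)-c_2(F_2))$, hence $c_2(F_1)=c_2(F_2)$ by torsion-freeness, and the Marian--Zhao result (Proposition~\ref{prop:K3HK}(i)) upgrades this to $[F_1]=[F_2]$ in $\CH_0(Y)$. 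No analysis of even graded components is ever needed.

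A second concrete problem is your treatment of $n=2$ with $\rank(\NS(Y)^{\iota_*})=1$: you propose to realize $\iota$ via an (anti-)autoequivalence of a K3 surface, but by Theorem~\ref{thm:Bridgelandmoduli} this is precisely an excluded case --- such $Y$ need not be a Bridgeland moduli space at all. By the classification in \cite{CCC21} this case is the double EPW sextic with its natural anti-symplectic involution, and the paper handles it by a separate geometric argument (Proposition~\ref{epw}, relying on \cite{Z23} and the associated Gushel--Mukai fourfold), not by any derived-category lift. Without this case analysis your proof does not cover the full hypothesis ``$n\le 2$''.
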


Under the assumption of Theorem \ref{thm:lag-cons}, $Y$ has a natural associated $K3$ category.  More precisely, $Y$  is either  a Bridgeland moduli space of a twisted $K3$ surface or a double EPW sextic.  The remaining case that is not yet understood is  when $Y$ is a very general hyper-K\"ahler variety of $K3^{[n]}$-type ($n\geq 3$) with $\NS(Y)=\ZZ H$
and $H^2=2$ or $4$. 

\subsection*{Organization of the paper}
In Section \ref{sec:Bridgeland}, we review basic results of the derived category of twisted $K3$ surfaces and Bridgeland moduli spaces. We also discuss  the induced actions of derived (anti-)equivalences on Chow and cohomology groups.

In Section \ref{sec:ref}, we introduce the notion of reflective (anti-)autoequivalences of type I and II, which are automatically (anti-)symplectic autoequivalences.  Building on Huybrechts' result  \cite[Theorem 2.3]{Huy12}, we extend his finds to twisted $K3$ surfaces. Consequently, we  prove Bloch's conjecture for reflective autoequivalences. In fact, our result extend beyond this, offering  a much more general result for  autoequivalences with fixed vectors. See Theorem \ref{thm:lagk3} for details.

Section \ref{sec:bloch-hk} and \ref{sec:proof} are devoted to proving Theorem \ref{thm:blochHK} and Theorem \ref{thm:bloch-finite}.  We discuss the properties of Voisin's filtrations on zero cycles of $K3^{[n]}$-type varieties, especially on Bridgeland moduli spaces.
We establish a lifting result which shows that birational automorphisms on Bridgeland moduli spaces are induced from autoequivalences. This enables us to deduce Theorem \ref{thm:blochHK} from Theorem \ref{thm:thm2}.  Furthermore, after a  classification of all the birational automorphisms with finite orders,  we prove Theorem \ref{thm:bloch-finite}.

In Section \ref{sec:kneser}, we review the  Huybrechts involutions in the orthogonal group of Mukai lattices.  In general, they come from  the Eichler transvections and form a normal subgroup in the orthogonal group. By using  the strong approximation and congruence subgroup problem,  we obtain a Cartan-Dieudonn\'e type theorem which asserts that the stable special orthogonal group of an extended Mukai lattice is generated by reflective involutions. This implies Theorem \ref{thm:thm2}.

In Section \ref{sec:pic12}, we discuss the case of $\rank(\NS(X))\leq 2$ and provide many examples where Bloch's conjecture for autoequivalences holds. For automorphisms, we propose and discuss a reduced conjecture.

\subsection*{Acknowledgment} We  want  to  thank Ziyu Zhang, Yiran Chen and Haoyu Wu for very helpful discussions. The first author is supported by NSFC grant (No. 12121001, No. 12171090) and Shanghai Pilot Program for Basic Research (No. 21TQ00). The second author is supported by NSFC grant (No. 12071337, No. 11831013, No. 11921001).

\section{Bridgeland moduli spaces}\label{sec:Bridgeland}

\subsection{Twisted $K3$ surfaces}
 Let us briefly review the basic results of twisted sheaves on $K3$ surfaces.  Our main reference is  \cite{LMS14} and \cite{HS05}.  Let $\srX\rightarrow X$ be a $\bm\mu_m$-gerbe over $X$. This corresponds to a pair $(X,\alpha)$ for some  $\alpha\in \rH^2_{fl}(X,\bm\mu_m)$, where the cohomology group is with respect to the flat topology.   The gerbe $\srX\to X$ or the  pair $(X,\alpha)$  is called a twisted $K3$ surface.  There is a Kummer exact sequence 
\begin{equation}\label{eq:Kummer}
    1\rightarrow \bm\mu_m\rightarrow {\cO_X^\times} \xrightarrow{x\mapsto x^m}  {\cO_X^\times}\rightarrow 1
\end{equation}
in flat topology and it  induces a surjective map 
\begin{equation}\label{braumap}
\rH^2_{fl}(X,\bm\mu_m)\rightarrow \mathrm{Br}(X)[m]. 
\end{equation}
We denote by $[\srX]$ the image of  $\alpha$ in $\Br(X)[m]$. 

Since $\rH^3(X,\ZZ)=0$, there is a {\it $B$-field} $B\in \rH^2(X,\QQ)$ such that $[\srX]=\delta(B)$, where  $\delta :\rH^2(X,\QQ)\to \rH^2(X, \cO_X^\times)$ is induced from the exponential exact sequence.  
Denote by  $$\widetilde \rH(X)=\bigoplus\limits_{i=0}^2\rH^{2i}(X,\ZZ)$$ the \emph{Mukai lattice} of $X$.  
The lattice structure on $\widetilde \rH(X)$ is given by the  \emph{Mukai pairing} $\left<-,-\right>$: 
\begin{equation}\label{pairing}
\left<(r, L, s), (r', L', s')\right>=L\cdot L'-rs'-r's \in \ZZ.
\end{equation}
We set $$e^B=(1,B,B^2/2)\in \widetilde{\rH}(X,\QQ),$$ and there is an isometry
$$\begin{aligned}
    {\bf exp}(B):\widetilde{\rH}(X,\QQ)&\longrightarrow \widetilde{\rH}(X,\QQ)\\
    (r,c,s) &\mapsto (r,c+rB,rB^2/2+s+c\cdot B).
\end{aligned}
$$
We can endow the Mukai lattice $\widetilde\rH(X,\ZZ)$ with a weight two Hodge structure 
determined by \begin{equation}
   \widetilde\rH^{2,0}:={\bf exp}(B)(\rH^{2,0}(X)).
\end{equation}
We call the resulting lattice the {\it twisted Mukai lattice} of $\srX$ 
and denote it by $\widetilde{\rH}(X,B,\ZZ)$ or $ \widetilde{\rH}(\srX,\mathbb{Z})$ (\cf.~\cite[Definition 2.3]{HS05}).

Up to a Hodge isometry,  $\widetilde{\rH}(\srX,\mathbb{Z})$ is independent of the choice of $B$. Usually, we will fix the choice of $B$.
The extended twisted N\'eron--Severi lattice is defined to be $$\widetilde{\NS}(\srX) := \widetilde{\rH}^{1,1}(\srX)\cap \widetilde{\rH}(\srX,\ZZ).$$ The twisted transcendental lattice is defined to be $$\rT(\srX):=\widetilde{\NS}(\srX)^\perp \subset \widetilde{\rH}(\srX,\ZZ).$$

\begin{definition}
An $n$-{\it twisted sheaf} $\cF$ on $\srX$ is an $\cO_{\srX}$-module of weight $n$ with respect to the $\mu_m$-gerbe structure ({\it cf}.~\cite[Def 2.1.2.4]{Lie07}). The Mukai vector of $\cF$ is defined as 
$$\bfv^B(\cF):={\bf exp}(B)(\ch_{\srX}(\cF)\sqrt{\td_X})\in \widetilde\NS(\srX),$$
where $\ch_{\srX}(\cF)  $ is the twisted Chern character of $\cF$ ({\it cf}.~\cite[3.3.4]{LMS14}).
\end{definition}
We will write $\rD^{(n)}(\srX)$ for the bounded derived category of coherent $n$-twisted sheaves. When $\srX=X$ is untwisted, we may write $\rD^b(X)=\rD^{(1)}(\srX).$
\subsection{Induced actions of (anti)-equivalences}
For any derived equivalence $\Phi:\rD^{(1)}(\srX)\cong \rD^{(1)}(\srX')$, it induces a Hodge isometry 
$$\Phi^{\widetilde{\rH}}:\widetilde{\rH}(\srX)\cong \widetilde{\rH}(\srX')$$ via the characteristic class of the universal object. Furthermore,  one can derive an action 
$$\Phi^\CH: \CH_0(X)_{\hom} \rightarrow \CH_0(X')_{\hom}$$
by employing the second component of the characteristic class (cf.~\cite[\S 1.2.3]{FV21})). It is easy to see that  if $E, F \in \rD^{(1)}(\srX)$  with $\bfv(E) = \bfv(F)$, then $$\Phi^\CH(c_2(E) - c_2(F)) = c_2(\Phi(E)) - c_2(\Phi(F)).$$
For anti-autoequivalences,  their actions on the  Chow group and Mukai lattices can be defined as follows. 
\begin{definition}
  If $\Phi:\rD^{(1)}(\srX)\rightarrow \rD^{(-1)}(\srX')^{\rm op}$ is a derived anti-equivalence, define $$\Phi^{\CH}:=(\DD \circ \Phi)^{\CH} \hbox{\rm ~and~} \Phi^{\widetilde\rH}:=((\DD \circ \Phi)^{\widetilde\rH})^\vee.$$
where the Hodge isometry $$(-)^\vee: \widetilde\rH(X,B,\ZZ)\to \widetilde\rH(X,-B,\ZZ)$$  is $-\id$ on $\rH^2(X,\ZZ)$ and $\id$ on $\rH^0(X,\ZZ)\oplus \rH^4(X,\ZZ)$.

  \end{definition}

A key result is 
\begin{theorem}(cf.~\cite[Corollary C]{Reinecke19})\label{Thm:lift-der}
 Let $\rO^+_{\rm Hodge}(\widetilde\rH(\srX,\ZZ))\subseteq \rO(\widetilde{\rH}(\srX,\ZZ))$ be the subgroup of orientation preserving Hodge isometries.  There is a surjection 
\begin{equation}\label{eq:t-chow}
    \Aut(\rD^{(1)}(\srX))\twoheadrightarrow \rO^+_{\rm Hodge}(\widetilde\rH(\srX,\ZZ)),
\end{equation}
which sends $\Phi \in  \Aut(\rD^{(1)}(\srX))$ to $\Phi^{\widetilde{\rH}}$, induced by the Mukai vector of the Fourier-Mukai kernel $\cE$. 
\end{theorem}

    A similar surjectivity result holds for derived anti-autoequivalences, whose actions on $\widetilde\rH(\srX,\ZZ)$ reverse the orientation.
    See Lemma \ref{lemma:anti-lift}.

\begin{definition}
   If $\Phi:\rD^{(1)}(\srX)\rightarrow \rD^{(1)}(\srX)$ is a derived (anti-)autoequivalence, we say $\Phi$ is (anti)-symplectic if   $\Phi^{\widetilde\rH}$ acts as $\pm \id$ on $\rT(\srX)$. Set $\Aut^s(\rD^{(1)}(\srX))$   to be the subgroup  of symplectic autoequivalences. We may also denote by $\Aut^s(X)\subseteq \Aut(X)$ the subgroup of symplectic automorphisms.
\end{definition}

%An immediate consequence is \begin{corollary} Let $\widetilde{\rO}(\widetilde{\NS}(\srX))\subseteq \rO(\widetilde{\NS}(\srX))$ be the stable orthogonal group of $\widetilde{\NS}(\srX)$. There is a surjection \begin{equation}    \Aut(\rD^b(\srX))\twoheadrightarrow {\rO}^+(\widetilde{\NS}(\srX))\cap \widetilde{\rO}(\widetilde{\NS}(\srX)).\end{equation}\end{corollary}

\begin{comment}
    For an object $\cE$ in  $\rD^b(\srX\times \srY)$, one can define the class $\bfv^\CH(\cE):=\ch_\srX(\cE)\sqrt{{\rm td}_{X\times Y}}\in \CH^*(X\times Y)_\QQ$.
It induces an action on the Chow group.
\begin{proposition}\cite[Corollary 2.2]{Huy19}\label{prop:isochow}
    Let $\Phi: \rD^b(\srX)\cong \rD^b(\srY)$ be a derived equivalence with Fourier-Mukai kernel $\cE$. Then the induced action 
    $$\bfv^\CH(\cE)_*:\CH^*(X)_\QQ\to \CH^*(Y)_\QQ$$ is an isomorphism.
\end{proposition}
Note that $\bfv^\CH(\cE)_*$ sends $\bfv^\CH(E)$ to $\bfv^\CH(\Phi(E))$ for any $E\in \rD^b(\srX)$.
\end{comment}
\subsection{Bridgeland stability conditions}\label{sucsec:bs}
Let us recall some results on Bridgeland stability conditions on (twisted) $K3$ surfaces. A stability condition consists of a pair $(Z,\rP)$, where $Z$ is called a central charge and $\rP$ is a slicing.

Let ${\cP}(\srX)\subseteq \widetilde{\NS}(\srX)_\CC$ be the collection of  vectors $\zeta$ such that ${\rm Re~}\zeta$ and ${\rm Im~}\zeta$ span a positive plane. Then ${\cP}(\srX)$ has two connected components. We denote by $\cP^+(\srX)$ the component which contains $e^{iH}$ for some ample line bundle $H\in \Pic(X)$ and set $$\cP_0^+(\srX)=\cP^+(\srX)\setminus \bigcup_{\delta^2=-2}\delta^\perp.$$ 
There is an open subset $U(\srX)\subset {\rm Stab(\srX)}$ consists of  geometric stability conditions in the sense of \cite{Bridgeland08}.  Denote by ${\rm Stab}^+(\srX)$  the connected component containing $U(\srX)$. Then Bridgeland showed that
\begin{theorem}\cite[Theorem 1.1]{Bridgeland08}\cite[Proposition 3.10]{HMS08}\label{deck} 
There is a covering map $$\cZ: {\rm Stab}^\dagger(\srX)\rightarrow \cP_0^+(\srX),~\sigma=(Z,\rP)\mapsto \Omega_Z.$$  Moreover, if $\srX=X$ is trivial, the subgroup of $\Aut^0(\rD^{(1)}(X))$ which preserves ${\rm Stab}^\dagger(X)$ acts as deck transformations of $\cZ.$ 
\end{theorem}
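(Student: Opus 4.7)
The plan is to establish the theorem in two stages, mirroring its two assertions. For the covering-map statement, I would proceed via Bridgeland's deformation theory of stability conditions. One first identifies the central charge $Z\colon \Lambda\to\CC$ with an element of $\Lambda_\CC\cong\widetilde{\NS}(\srX)_\CC$ via the Mukai pairing, so that the vector $\Omega_Z$ is well defined. The image of $\cZ$ lies in $\cP_0^+(\srX)$: the existence of $\sigma$-semistable objects of every phase forces the real and imaginary parts of $\Omega_Z$ to span a positive-definite $2$-plane, while the avoidance of the hyperplanes $\delta^\perp$ for every $(-2)$-class $\delta$ is precisely the requirement that any $\sigma$-semistable spherical object has nonzero central charge. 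Bridgeland's deformation theorem (any sufficiently small perturbation of the central charge of a stability condition lifts uniquely to a deformation of $\sigma$) then promotes $\cZ$ to a local homeomorphism.

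To upgrade to a covering map, the key ingredient is the path-lifting property. Given a path $\gamma$ in $\cP_0^+(\srX)$ starting at $\cZ(\sigma_0)$, one lifts it stepwise using the deformation theorem, and argues by compactness of the parameter interval that the lift extends to the whole of $\gamma$; the crucial point is that the lifted stability conditions remain in ${\rm Stab}^\dagger(\srX)$. This reduces to controlling the wall-and-chamber structure of the stability manifold: walls are locally governed by spherical objects, and the removal of $\delta^\perp$ for $(-2)$-classes from $\cP^+(\srX)$ ensures that a lift cannot hit a wall of ``totally semistable'' type or degenerate. In the twisted setting, one verifies, following \cite{HMS08}, that twisted spherical objects exhibit the same local wall behavior as in the untwisted case.

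For the second assertion, note that by definition $\Aut^0(\rD^b(X))$ acts trivially on $\widetilde{\rH}(X)$, and hence trivially on the base $\cP_0^+(X)$ of the cover. Consequently, any $\Psi\in \Aut^0(\rD^b(X))$ that additionally preserves ${\rm Stab}^\dagger(X)$ induces an automorphism of ${\rm Stab}^\dagger(X)$ commuting with $\cZ$, i.e.\ a deck transformation.

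The main obstacle I anticipate is the covering-map assertion in its global form: while the local homeomorphism property is essentially formal from the deformation theorem, upgrading to an honest covering requires a careful verification that removing exactly the hyperplanes $\delta^\perp$ with $\delta^2=-2$ is both necessary and sufficient to obtain unobstructed path-lifting, together with a demonstration that the lifts do not leave the connected component ${\rm Stab}^\dagger(\srX)$. This is the technical heart of Bridgeland's original argument, and its extension to the twisted case in \cite{HMS08} relies on parallel control of spherical objects in the twisted derived category.
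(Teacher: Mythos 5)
This statement is not proved in the paper: it is quoted verbatim from \cite[Theorem 1.1]{Bridgeland08} and \cite[Proposition 3.10]{HMS08}, so there is no internal argument to compare against. Your sketch is a faithful outline of the cited proofs: the identification $Z=(\Omega_Z,-)$ via the Mukai pairing, the local homeomorphism from Bridgeland's deformation theorem, the role of the $(-2)$-hyperplanes $\delta^\perp$ (every such $\delta$ is the class of a spherical object whose central charge must not vanish), and the observation that an element of $\Aut^0(\rD^b(X))$ preserving ${\rm Stab}^\dagger(X)$ acts trivially on $\widetilde{\rH}(X)$, hence commutes with $\cZ$ and is by definition a deck transformation. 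Two caveats. First, Bridgeland's actual upgrade from local homeomorphism to covering map does not proceed by naive path lifting but by his quantitative covering criterion (uniform-radius neighbourhoods on which $\cZ$ is injective and surjective onto a metric ball), which rests on the support-property estimate $|Z(\bfv(E))|\geq C\|\bfv(E)\|$; your phrase ``argues by compactness'' conceals exactly the step where one must rule out lifts accumulating on the boundary of ${\rm Stab}^\dagger(\srX)$, and this is the genuine technical content. Second, in the twisted case \cite{HMS08} only establishes the covering statement, not the deck-transformation statement, which is why the theorem restricts the latter to $\srX=X$ untwisted; your write-up respects this, but it is worth being explicit that the twisted analogue of the second assertion is open. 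As a reconstruction of a cited external result your proposal is sound in outline, though not a self-contained proof.
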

Any autoequivalence $\Psi\in \Aut(\rD^{(1)}(\srX))$ induces an action $\psi$ on $\cP_0^+(\srX)$ and hence an action on stability conditions:  $$(Z,{\rP})\mapsto (Z\circ \psi^{-1},{\rP}'),$$ where ${\rP}'(\phi)=\Psi({\rP}(\phi))$.

Let $\Aut^\dagger(\rD^{(1)}(\srX))\subset \Aut(\rD^{(1)}(\srX))$ be the subgroup of autoequivalences which respect the distinguished component ${\rm Stab}^\dagger(X)$, then \cite[Proposition 7.9]{hartmann2012cusps} (see also \cite[Theorem 2.12]{bayer2014mmp})  showed that there is a surjection:
\begin{equation}\label{Der:distinguished}
    \Aut^\dagger(\rD^{(1)}(\srX))\twoheadrightarrow \rO_{\rm Hodge}^+(\widetilde\rH(\srX,\ZZ)).
\end{equation}

\subsection{Dual stability}
In this section, we assume that $\srX=X$ is untwisted. The dual stability $\sigma^\vee$ of a stability condition $\sigma$ is defined such that an object $E$ is $\sigma$-(semi)stable with phase $\phi$ if and only if $E^\vee[2]$ is $\sigma^\vee$-(semi)stability with the opposite phase $-\phi$.(\cf.~ \cite[Proposition 2.11]{bayer2014mmp}). The derived dual functor $\DD$ induces a natural action:
\begin{equation}
    \begin{aligned}
     {\rm Stab}^\dagger(X)&\to {\rm Stab}^\dagger(X)\\  ~\sigma &\mapsto \sigma^\vee[2]
    \end{aligned}.
\end{equation}
For the central charge, we have 
$$\Re(\cZ(\sigma^\vee))=({\rm Re~}\cZ(\sigma))^\vee,~\Im(\cZ(\sigma^\vee))=-({\rm Im~}\cZ(\sigma))^\vee.$$ 
Here on the right hand sides,  $(-)^\vee:\widetilde\rH(X)\longrightarrow \widetilde\rH(X)$ is the action induced by $\DD$. 
\begin{definition}
  We define the action of a derived anti-autoequivalence $\Phi$ on ${\rm Stab}^\dagger(X)$ by $$\Phi(\sigma):=((\DD\circ \Phi)(\sigma))^\vee [2].$$ 
\end{definition}
A simple fact is 
\begin{lemma}\label{fixedsc}
    Any involution $\psi\in O^+_{\rm Hodge}(\widetilde\rH(X,\ZZ))$ can be lifted to a derived anti-autoequivalence $\Psi$, such that there exists a stability condition $\sigma$ and $\Phi(\sigma)=\sigma$. Moreover, if $\psi$ is a reflection around a vector $v$ with $v^2=2$, we can choose $\sigma$ to be $v$-generic.
\end{lemma}
\begin{proof}
    Choose an orthogonal basis of $\NS(X)\otimes \RR$ consisting of $w_1,\ldots, w_n$ and $u_1,\ldots, u_m$ such that $\psi(w_i)=w_i$ and $\psi(u_j)=-u_j$ for $i=1,\ldots ,n, ~j=1,\ldots m.$ We may assume that $w_1^2>0, u_1^2>0$ and
    $w_1+\sqrt{-1}u_1\in \cP^+(X)$. Let
    $$\Omega_Z:=\sum_{i=1}^n a_iw_i+\sqrt{-1}\sum_{j=1}^m b_ju_j,$$ with $\frac{a_i}{a_1},~\frac{b_j}{b_1}$ sufficiently small for $i=1,\ldots, n, ~j=1,\ldots ,m.$ 
    Then $\Omega_Z\in \cP_0^+(X)$ defines a central charge.
    By a straightforward computation, we obtain that $(\psi(\Omega_Z))^\vee$ equals the complex conjugation of $\Omega_Z$. 

    Therefore, Theorem \ref{deck} and the surjection \eqref{Der:distinguished} show that there is a lift $\Phi$ of $\psi\circ (-)^\vee $ such that $\Phi(\sigma)=\sigma^\vee$.
    We can obtain the first assertion by composing both sides with $\DD$.

    For the last assertion,   the central charge becomes:
    $$\Omega_Z=v+\sqrt{-1}\sum_{j=1}^m b_ju_j.$$ If there is a vector $v'$ such that $\frac{\left<\Omega_Z,v\right>}{\left<\Omega_Z,v'\right>}\in \RR$, then  $v'$ has to be proportional to $v$. This means that $\sigma$ is not lying on a $v$-wall.
    
\end{proof}

%If $\Phi$ is an anti-autoequivalence, we define the induced action on ${\rm Stab}^\dagger(X)$ by the composition of the induced actions of $\DD$ and $\DD\circ \Phi$.

\subsection{Bridgeland moduli space as HK}
Let $v\in \widetilde\rH(\srX,\ZZ)$ be an algebraic primitive vector with $v^2\geq 0$ for a twisted $K3$ surface $\srX\to X$. Consider the moduli stack $\srM_\sigma(\srX, v)$  of $\sigma$-stable sheaves on $\srX\to X$ with Mukai vector $v$. If $\sigma$ is $v$-generic, the coarse moduli space  $M_\sigma(\srX, v)$ is a smooth projective hyper-K\"ahler variety and  there exists a universal object $$\cF\in \rD^{(-1,1)}(\srX\times \srM_\sigma(\srX, v)),$$ and also  a quasi-universal object $\cE\in \rD^{(-1,0)}(\srX\times M_\sigma(\srX, v))$ in the sense of \cite[Theorem A.5]{Muk87}.
They are both unique up to a certain equivalence relation.

The following result is due to many people, for example, \cite[Sections 7 and 8]{Yos01} and \cite[Theorem 6.10, Section 7]{BM14}.
\begin{theorem}[Huybrechts-O’Grady-Yoshioka-Bayer-Macr\`\i]\label{thm:HOYBM}
Let $\srX\to X$ be a twisted $K3$ surface and let $v\in \widetilde\NS(\srX)$ be a primitive vector with $v^2\geq 0$ and $\sigma$ be a $v$-generic stability condition.  Then we have 
\begin{enumerate}
    \item $M=M_\sigma(\srX, v)$ is a smooth hyper-K\"ahler variety of $K3^{[n]}$-type with dimension $2+v^2$.  
    \item Denote by $[\_]_2$ the projection $\rH^*(M,\ZZ)\to\rH^2(M,\ZZ),$ 
    the quasi-universal object $\cE$ induces a Hodge isometry 
  \begin{equation}
  \begin{aligned}
   \theta_{\sigma,v}:v^\perp &\longrightarrow  \rH^2(M,\ZZ)\\
       u &\mapsto \frac{1}{\rho}\cdot [p_{M*}(\bfv(\cE^\vee)\cdot p_X^*(u))]_2
  \end{aligned},
  \end{equation}
if  $ v^2> 0$, while  the Hodge isometry becomes $$\theta_{\sigma,v}:v^\perp/v \cong  \rH^2(M,\ZZ),$$
if $v^2=0$. Here, $\rho$ is the similitude of $\cE$ and $\theta_{\sigma,v}$ is independent of the choice of $\cE.$
\end{enumerate}
\end{theorem}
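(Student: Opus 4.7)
The plan is to combine the classical theory of Gieseker moduli spaces on (twisted) K3 surfaces with the wall-crossing framework of Bayer--Macr\`\i{}, treating the Bridgeland-stable setting as a birational perturbation of the Gieseker-stable one. First, for untwisted $X$ and Gieseker stability with respect to a $v$-generic ample class, Mukai's original construction produces an everywhere non-degenerate holomorphic $2$-form on the smooth locus of $M_H(X,v)$; smoothness follows from unobstructedness (the trace of the obstruction in $\mathrm{Ext}^2$ vanishes on a K3), and the dimension $2+v^2$ is read off from the Euler form via the Mukai pairing. That $M_H(X,v)$ is of $K3^{[n]}$-type is established by specialization to $v=(1,0,1-n)$ (the Hilbert scheme of $n$ points) together with a Yoshioka-style connectedness argument for the moduli of pairs (K3, primitive Mukai vector of fixed square).

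Next, I would reduce Bridgeland stability to Gieseker stability via \cite[Theorem 6.10]{BM14}. Any $v$-generic $\sigma$ lies in an open chamber of the $v$-wall-and-chamber decomposition; adjacent chambers are connected by flops or isomorphisms which preserve both the hyper-K\"ahler structure and the deformation class. A path to the large-volume chamber identifies $M_\sigma(X,v)$ birationally with a Gieseker moduli space, after which the previous step applies. For the twisted case, one can either use Lieblich's construction of moduli of twisted sheaves together with Yoshioka's extension of the Mukai symplectic form, or appeal to a Fourier--Mukai equivalence $\rD^b(\srX)\simeq \rD^b(Y)$ for some untwisted K3 surface $Y$ (available when $v^\perp$ contains a suitable isotropic vector) to reduce to the untwisted case.

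I would handle the Mukai morphism by Grothendieck--Riemann--Roch applied to $p_{M\ast}$. The projection formula together with the Mukai-pairing identity for the Mukai vector of the quasi-universal kernel $\cE$ shows that $\theta_{\sigma,v}$ preserves the pairing up to the factor $\rho^2$, while Hodge compatibility follows because $\bfv(\cE)$ lies in the $(1,1)$-part of $\widetilde{\rH}(\srX\times M)$ with respect to the twisted Hodge structure. Independence from the choice of quasi-universal family comes from the fact that two such families differ by the pullback of a line bundle from $M$; the resulting ambiguity in $\theta_{\sigma,v}$ is a multiple of $v$ itself, which vanishes on $v^\perp$ when $v^2>0$ and is exactly what is killed by the quotient $v^\perp/v$ when $v^2=0$.

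The main obstacle I foresee is the $K3^{[n]}$-deformation-type assertion in the twisted Bridgeland-stable case. Showing every $M_\sigma(\srX,v)$ deforms to $\mathrm{Hilb}^n$ of an untwisted K3 requires a simultaneous deformation of the gerbe $\srX$, the Mukai vector $v$, and the stability condition $\sigma$, together with a check that the universal object and the hyper-K\"ahler structure extend. The connectedness of the moduli of triples $(\srX,v,\sigma)$ and the properness of the relative moduli space along such deformations are the main technical inputs that must be assembled; once this is in place, the statement propagates from the Hilbert scheme case where everything is explicit.
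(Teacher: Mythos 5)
The paper does not prove this statement: it is quoted as a known result, attributed to Huybrechts, O'Grady, Yoshioka, Bayer and Macr\`\i{}, with explicit pointers to \cite[Sections 7 and 8]{Yos01} and \cite[Theorem 6.10, Section 7]{BM14}. There is therefore no in-paper argument to compare yours against. That said, your outline is a faithful reconstruction of how the cited literature actually proceeds: Mukai's symplectic form and unobstructedness for smoothness and the dimension count, Yoshioka's deformation/connectedness argument to pin down the $K3^{[n]}$ deformation type for Gieseker moduli, the Bayer--Macr\`\i{} wall-and-chamber analysis to carry Bridgeland moduli back to the large-volume (Gieseker) chamber, Lieblich--Yoshioka for the twisted case, and Grothendieck--Riemann--Roch plus the quasi-universal-family ambiguity analysis for the Mukai morphism. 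You also correctly flag the genuinely delicate point, namely the simultaneous deformation of $(\srX, v, \sigma)$ needed to propagate the deformation type in the twisted Bridgeland setting; this is precisely where the heavy lifting in Yoshioka's and Bayer--Macr\`\i{}'s papers sits, and your sketch names rather than executes it, which is appropriate for a result of this scope.

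Two small imprecisions worth fixing if you were to write this out. First, the ambiguity in $\theta_{\sigma,v}(u)$ coming from replacing $\cE$ by $\cE\otimes p_M^*L$ is of the form $\left<v,u\right>\cdot c_1(L)$ (a multiple of $c_1(L)$ weighted by the pairing of $u$ against $v$), not ``a multiple of $v$''; it vanishes on $v^\perp$ for exactly that reason, in both the $v^2>0$ and $v^2=0$ cases. Second, the statement that the unnormalized map scales the pairing by $\rho^2$ should really be: the pushforward scales linearly in $\rho$, so the $\frac{1}{\rho}$-normalized map is the isometry; the isometry property itself is not a formal consequence of GRR alone but requires the standard computation with the Künneth components of $\bfv(\cE)$ and the fact that $\cE$ restricts to simple objects on fibers.
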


Conversely, there is a characterization of a $K3^{[n]}$-type hyper-Kähler variety to be a moduli space of twisted objects. See \S \ref{subsection:c-Bridgeland}.

%Due to the work of Mukai \cite{Mukai84} ,  O'Grady \cite{O97}  and  Huybrechts \cite{MR1664696},   $X$ is an irreducible symplectic variety of dimension $v^2+2$ and  is of $K3^{[n]}$-deformation type. The Mukai morphism $\theta_{\sigma,v}$ is induced by a quasi-universal object, we are not sure whether the integral isometry can be induced by a universal object in $ \rD^{(-1,1)}(\srX\times \srM_\sigma(\srX, v))$. However,

%For $v^2=0$, the universal object induces a derived equivalence$\rD^b(\srX)\cong \rD^b(\srM_\sigma(\srX, v))$and a Hodge isometry $$\widetilde\rH(\srX,\ZZ)\cong \widetilde\rH(\srM_\sigma(\srX, v),\ZZ),$$ which sends $v$ to $(0,0,1)$. Let $\sigma_M$ be the $2$-form on $M_\sigma(\srX, v)$ and let $B_M$ be a B-field of $\srM_\sigma(\srX, v)$.The $\widetilde\rH^{2,0}$-part on $\widetilde\rH(\srM,\ZZ)$ is generated by $(0,\sigma_M,\sigma_M\wedge B_M)\in \widetilde\rH(\srM,\CC)$ and hence the universal object further induces a Hodge isometry $v^\perp/v\cong \rH^2(M,\ZZ)$.

\begin{comment}
    \begin{theorem}\label{moduli}
    Let $v\in\widetilde\rH(X,\ZZ)$ be a primitive Mukai vector with $v^2\geq 0$ and $\sigma\in {\rm Stab}(X)$ be a $v$-generic stability condition. Then the coarse moduli space $M_\sigma(v)$ of $\sigma-$stable objects with Mukai vector $v$ is a smooth hyper-K\"ahler variety of K3$^{[n]}$-type with dimension $2+v^2$. 
\end{theorem}
\end{comment}
%

\section{Bloch's conjecture for reflective (anti-)autoequivalences}\label{sec:ref}

In this section, we introduce the notion of reflective (anti-)autoequivalences as special (anti-)symplectic autoequivalences.  We will demonstrate  Bloch's conjecture for such autoequivalences. 
\subsection{Reflective (anti-)autoequivalences}\label{subsec:ref-aut}
Throughout this section, we let $\srX\to X$ be a twisted $K3$ surface.

\begin{definition}\label{def:rde}
     Let $\Phi:\rD^{(1)}(\srX)\to \rD^{(1)}(\srX)$ (resp. $\Phi:\rD^{(1)}(\srX)\to \rD^{(-1)}(\srX)^{\rm op}$) be a derived (anti-)autoequivalence.  It is called a {\it reflective (anti-)autoequivalence} if the induced action $\Phi^{\widetilde{\rH}}$ is one of the following
  \begin{enumerate}
      \item [(I)]  $\Phi^{\widetilde{\rH}}(v)=-v$ and  $\Phi^{\widetilde{\rH}}|_{v^\perp/v}=\id$ for  some primitive vector $v\in \widetilde{\NS}(\srX)\subseteq \widetilde{\rH}(\srX)$ with $v^2= 0$.
      
      \item [(II)]  $\Phi^{\widetilde{\rH}}(v)=-v$ and $\Phi^{\widetilde{\rH}}|_{v^\perp}=\id$ for  some  primitive vector $v\in \widetilde{\NS}(\srX)\subseteq\widetilde{\rH}(\srX)$ with $v^2\neq 0$. 
  \end{enumerate}  

\end{definition}

In the case (II), as $\widetilde{\rH}(\srX)$ is unimodular and $v$ is primitive, one must have $v^2=\pm 2$.  If $v^2=-2$, $\Phi$ is a derived autoequivalence whose action is the same as spherical twist. If $v^2=2$ or $0$, $\Phi$ has to be a derived anti-autoequivalence.    In the case $v^2=2$,  this was also called a skew equivalence in \cite{HT23}. 
\begin{example}[Reflection of a prime exceptional divisor] Let us  give a geometric example of reflective autoequivalence of type (I). This is due to  \cite[Theorem 1.1]{Mark13}.  Let $Y$ be a $K3^{[n]}$-type hyper-K\"ahler variety and $E$ be an integral effective divisor with $(E,E)<0$. 
Then there exists a sequence of flops $$Y\dashrightarrow Y_1\dashrightarrow\ldots \dashrightarrow Y_n ,$$ such that the strict transform $E_n\subset Y_n$ of $E$ is contractible (\cf.~ \cite[Proposition 1.4]{D11}).
Set $e=c_1(E)$,
Markman has shown in \cite[Theorem 1.1]{Mark13} that the reflection $$s_e:\rH^2(Y,\QQ)\to \rH^2(Y,\QQ),~u\mapsto u-\frac{2(u,e)}{(e,e)}e$$ is in fact an integral monodromy operator. It can be lifted to an isometry $\widetilde{s}_e$ of $\Lambda(Y)$, sending the generator $w$ of $\rH^2(Y,\ZZ)^\perp\subset \Lambda(Y)$ to $-w$. Let $v=w+e$, then  $\widetilde{s}_e$ sends $v$ to $-v$ and is the identity  on $v^\perp/v$.
\end{example}

\begin{definition}
  An orthogonal  transformation $g\in \rO(\widetilde{\NS}(\srX))$  is called a {\it reflective involution} if  $g$ satisfies the following conditions 
 \begin{itemize}
     \item  there exists a vector $v\in \widetilde{\NS}(\srX)$ such that $g(v)=-v$ and $g|_{v^\perp}=\id$ if $v^2\neq 0$ or  $g|_{v^\perp/v}=\id$ if $v^2=0$,
     \item  $g$ acts as $\id$ on the discriminant group $A_{\widetilde{\NS}(\srX)}$.
 \end{itemize}
\end{definition}

A simple fact is 
\begin{lemma}\label{lem:lift-involution}
If $g\in \rO (\widetilde{\NS}(\srX))$ is a reflective involution,   then there exist a  reflective (anti-)autoequivalences $\Phi$ such that  $\Phi^{\widetilde{\NS}}=g$. 
\end{lemma}

\begin{proof}
As $\widetilde{\rH}(\srX)$ is unimodular,  by Nikulin's result,  the reflective involution $g$ can be extended to an integral Hodge isometry between $\widetilde{\rH}(\srX)$ satisfying the conditions in Definition \ref{def:rde}.  By \cite{HMS09}, they can be further lifted to an (anti)autoequivalence $\Psi_i$ such that $$\Phi^{\widetilde\rH}=\prod \Psi_i^{\widetilde\rH}.$$
%By  Theorem \ref{thm:aut-inj}, it suffices to verify that $$(\Psi_1\circ \Psi_2)^{\CH}=\Psi_1^\CH\circ \Psi_2^\CH, $$ for (anti)autoequivalences. When $\Psi_i$ both are autoequivalences, this is trivial. If one of them is  an anti-autoequivalence, one can compose it with the derived dual  $\DD$ and  the assertion follows from the fact  $\Psi^\CH=(\DD\circ \Psi)^\CH=(\Psi\circ  \DD)^\CH$. 
\end{proof}

In the rest of this section, we will prove Conjecture \ref{conj2} for reflective autoequivalence if either $\srX=X$ is untwisted or $\Phi$ is of type I.

%\begin{theorem}(\cite[Theorem 2.3]{Huy10})\label{thm:aut-inj}If there are two derived equivalences $\Psi_1,~\Psi_2:\rD^b(X)\rightarrow \rD^b(X')$  satisfying $\Psi_1^{\widetilde\rH}=\Psi_2^{\widetilde\rH}$, then  $\Psi_1^\CH=\Psi_2^\CH :\CH_0(X)\rightarrow \CH_0(X)$.\end{theorem}
%Recall that for a derived anti-equivalence $\Phi$, the action $\Phi^\CH=(\DD\circ \Phi)^\CH$. Clearly, the Theorem remains valid if we allow $\Psi_1,~\Psi_2$ to be derived anti-equivalences.
\subsection{Action of autoequivalences: Chow v.s. cohomology}
 Let $\Phi\in \Aut(\rD^{(1)}(\srX))$ be an autoequivalence. A pivotal discovery by Huybrechts \cite{Huy10},  is that the action $\Phi^{\CH}$ is determined by its action on $\widetilde{\rH}(\srX)$ when $\srX=X$ is untwisted. In the subsequent discussion, we extend Huybrechts' work to the twisted case and we will need the following result. 

%More generally, we consider in this section the induced action from derived equivalences between twisted $K3$ surfaces. 
    
%Though the group $\Aut^0(\rD^b(\srX))$ is still mysterious, we will show that any equivalence in this group induces the identity on the Chow group.  Roughly speaking, we first deform such equivalences to generic twisted $K3$ surfaces in the sense of \cite{Reinecke19}, which is allowed by the following result. 
\begin{proposition}(\cf.~\cite[Proposition 4.3]{Reinecke19})\label{deformtwisted}  Denote by  $\Aut^0(\rD^{(1)}(\srX))$ the kernel of $$\Aut(\rD^{(1)}(\srX))\rightarrow O^+_{\rm Hodge}(\widetilde\rH(\srX,\ZZ)).$$ For any $\Phi\in\Aut^0(\rD^{(1)}(\srX))$, 
there exist two families  of twisted $K3$ surfaces 
\begin{equation}\label{eq:fam-tk3}
    \mathfrak{X}\rightarrow T,~\mathfrak{X}'\rightarrow T,
\end{equation}
over a quasi-projective scheme $T$ satisfying the following conditions: 
\begin{enumerate}
\item there exists $t_0\in T$ so that the fibers  $\fX_{t_0}$ and $\fX'_{t_0}$ are both isomorphic to $\srX$. 
    \item  The underlying families of $K3$ surfaces are an \'etale covering of the universal family of $H$-polarized $K3$ surfaces.
    \item  for very  general $t\in T$,  the derived category $\rD^{(1)}(\fX_t) $ contains no spherical objects. 
    \item $\Phi$ can be deformed to a relative Fourier-Mukai transform $\widetilde{\Phi}$ between 
    $\rD^{(1)}(\fX/T)$ and $\rD^{(1)}(\fX'/T)$. 
\end{enumerate} 
\end{proposition}
\begin{proof}
One can take a projective family of twisted $K3$ surface 
$\fX\to T$ satisfying (1)-(3).  Here, condition (3) can be achieved  by  taking a general projective deformation of $\srX\to X$ (see the proof of \cite[Theorem A]{Reinecke19}).  As $\Phi^{\widetilde{\rH}}=\id$, it sends $(0,0,1)$ to $(0,0,1)$. Then the existence of $\fX'\to T$ and $\widetilde{\Phi}$ follows from \cite[Proposition 4.3]{Reinecke19}
\end{proof}
Then we have 
\begin{theorem}\label{thm:aut-inj}
 Let $\pi:\srX\to X$ and $\srX'\to X'$ be two twisted $K3$ surfaces.   If there are two derived equivalences $\Psi_1,~\Psi_2:\rD^{(1)}(\srX)\rightarrow \rD^{(1)}(\srX')$  satisfying $\Psi_1^{\widetilde\rH}=\Psi_2^{\widetilde\rH}$, then  $\Psi_1^\CH=\Psi_2^\CH$.
\end{theorem}

\begin{proof}
When $\srX$ and $\srX'$ are untwisted, this is exactly \cite[Theorem 2.3]{Huy10}. For twisted $K3$ surfaces,  the argument is similar.  Let us  sketch the proof here. 
Set $\Phi=\Psi_1^{-1}\circ \Psi_2 \in \Aut(\rD^{(1)}(\srX))$. Using Proposition \ref{deformtwisted}, one can obtain families  as in \eqref{eq:fam-tk3}  of twisted $K3$ surfaces  which deform $\Phi$ and satisfy the conditions (1)-(4).  Let $P\in \rD^{(-1,1)}(\srX\times \srX)$ be the Fourier-Mukai kernel of $\Phi$ and $\mathfrak{G}\in \rD^{(-1,1)}(\fX\times \fX'/T)$ be the relative  Fourier-Mukai kernel.  Let $\eta\in T$ be the generic point. By  \cite[Proposition 3.18]{HMS08},  the generic fiber $$\mathfrak{G}_\eta \in \rD^{(-1,1)}(\fX_\eta\times \fX'_\eta)$$ is a shift of some twisted sheaf.
Moreover, the specialization map to the fiber at $t_0$ produces a twisted sheaf $G=\fG_{t_0}$ over $\fX_{t_0}\times \fX'_{t_0}=\srX\times \srX$ such that $$\ch_\srX(G)=\ch_{\srX}(P)\in \CH^*(X\times X)_\QQ.$$ 

The twisted sheaf $G$ is flat over a dense open subset of $\srX\times \srX$. Since $\bfv^B(G)$ induces the identity on the twisted Mukai lattice,  the restriction $$G|_{\{\pi^{-1}(x)\}\times \srX}$$ is a twisted sheaf with Mukai vector $(0,0,1)$, and  it must support at some point  $y\in X$. 
In this manner, we define a birational map $X\dashrightarrow X$, which sends $x$ to $y$, and it extends to an automorphism $f:X\rightarrow X$.  We have $\Gamma_f\subset {\rm Supp}~ G$ is one irreducible component and the other components do not dominant $X$. Therefore $f$ is symplectic.

%we find that $\Gamma_f\subset \mathrm{Supp}(G)$ is an irreducible component and the other components do not dominate $X$. Hence 

Finally, since the deformation $\mathfrak{X}\rightarrow T$ is an \'etale covering of the universal family of $H$-polarized $K3$ surfaces, we have $f^*(H)=H$. This implies that $f$ is a finite order automorphism. Therefore $f^*$ acts as identity on $\CH_0(X)_{\hom}$ by Voisin and Huybrechts’ results.   
\end{proof}

\begin{remark}\label{rmk:anti}
Theorem \ref{thm:aut-inj} also holds for derived anti-equivalences. Indeed, if $\Phi:\rD^{(1)}(\srX)\rightarrow \rD^{(-1)}(\srX')^{\rm op}$ is a derived anti-equivalence, we can  apply Theorem \ref{thm:aut-inj} to $\DD\circ \Phi$.
\end{remark}

\subsection{Induced action on Bridgeland moduli spaces}
Let $\Phi:\rD^{(1)}(\srX)\rightarrow \rD^{(1)}(\srX)$ (resp. $\Phi:\rD^{(1)}(\srX)\rightarrow \rD^{(-1)}(\srX)^{\rm op}$) be a derived (anti-)equivalence 
. Assume that $v^2\geq 0$ is a primitive algebraic Mukai vector that $\sigma$ is a $v$-generic stability condition.  Set $w=\Phi^{\widetilde\rH}(v)$ and $\tau=\Phi(\sigma)$, where $\tau$ is a stability condition on $\rD^{(-1)}(\srX)$.
Then $\Phi$ induces an isomorphism  
\begin{equation}
\begin{aligned}
       \phi: M_\sigma(\srX, v) & \rightarrow M_{\tau}(\srX,w)\\
       E & \mapsto \Phi(E) .
\end{aligned}
\end{equation}
One should note that in the anti-equivalence case, the target is the moduli space of stable objects in $\rD^{(-1)}(\srX)$.  We still use $\srX$ just for convenience.
Then we have 

\begin{lemma}\label{lem:cohom}
Let $\phi_\ast: \rH^2(M_\sigma(\srX, v),\ZZ)\to \rH^2(M_{\tau}(\srX,w),\ZZ)$ be the induced Hodge isometry. There are commutative diagrams
    $$\begin{tikzcd}
v^\perp \arrow[rr, "\epsilon\Phi^{\widetilde\rH}"] \arrow[d, "{\theta_{\sigma,v}}"] &  & w^\perp \arrow[d, "{\theta_{\tau,w}}"] & {{\rm if~}v^2\geq 2} \\
{\rH^2(M_\sigma(\srX, v),\ZZ)} \arrow[rr, "\phi_*"]                                          &  & {\rH^2(M_\tau(\srX, w),\ZZ)}                 &                      
\end{tikzcd},$$
and $$\begin{tikzcd}
v^\perp/v \arrow[rr, "\epsilon\Phi^{\widetilde\rH}"] \arrow[d, "{\theta_{\sigma,v}}"] &  & w^\perp/w \arrow[d, "{\theta_{\tau,w}}"] & {{\rm if~}v^2=0} \\
{\rH^2(M_\sigma(\srX, v),\ZZ)} \arrow[rr, "\phi_*"]                                            &  & {\rH^2(M_\tau(\srX,w),\ZZ)}                   &                  
\end{tikzcd},$$
where $\epsilon=1$ if $\Phi$ is a derived autoequivalence and $\epsilon=-1$ if $\Phi$ is a derived anti-autoequivalence.
\end{lemma}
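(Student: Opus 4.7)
The plan is to verify each commutative diagram by unwinding the definition of the Mukai morphism via a (quasi-)universal object and invoking naturality of the Mukai vector under Fourier--Mukai transforms.

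I first treat the case when $\Phi$ is a derived autoequivalence, so $\epsilon=1$. Choose a quasi-universal object $\cE$ on $X\times M_\sigma(X,v)$ of similitude $\rho$, so that
\[
\theta_{\sigma,v}(u) \;=\; \tfrac{1}{\rho}\bigl[p_{M_\sigma *}\bigl(\bfv(\cE^\vee)\cdot p_X^*u\bigr)\bigr]_2.
\]
Applying $\Phi$ in the first factor, $\cF:=(\Phi\times\id)(\cE)$ is a family on $X\times M_\sigma(X,v)$ whose fiber at $m$ is $\Phi(E_m)$, and since by construction $\phi(m)$ parametrizes $\Phi(E_m)$, this family coincides, up to the quasi-universal equivalence, with $(\id_X\times\phi)^*\cE'$ for a quasi-universal family $\cE'$ on $X\times M_\tau(w)$ of the same similitude. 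Functoriality of the Mukai vector under Fourier--Mukai transforms then yields
\[
\bfv(\cF^\vee) \;=\; (\Phi^{\widetilde{\rH}}\otimes\id)\bigl(\bfv(\cE^\vee)\bigr)
\]
in $\widetilde{\rH}(X)\otimes \rH^\ast(M_\sigma(X,v),\QQ)$. Inserting this into the definition of $\theta_{\tau,w}(\Phi^{\widetilde{\rH}}u)$ and applying the projection formula along $\id_X\times\phi$, together with the identities $p_{M_\tau}\circ(\id\times\phi)=\phi\circ p_{M_\sigma}$ and $p_X\circ(\id\times\phi)=p_X$, recovers $\phi_*\theta_{\sigma,v}(u)$. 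The $v^2=0$ case is identical after descending from $v^\perp$ to $v^\perp/v$, using $\Phi^{\widetilde{\rH}}(v)=w$.

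For $\Phi$ an anti-autoequivalence, set $\Psi:=\DD\circ\Phi$, a derived autoequivalence, and apply the first case to $\Psi$. The isomorphism on moduli induced by $\Psi$ is literally the same map $\phi$ induced by $\Phi$, since $\DD$ identifies $\rD^b(X)^{\opp}$ with $\rD^b(X)$ compatibly with stability, but the kernel used to describe the family on $X\times M_\sigma(X,v)$ changes via $\cE\leadsto \cE^\vee$. By the convention $\Phi^{\widetilde{\rH}}=((\DD\circ\Phi)^{\widetilde{\rH}})^\vee$ the map on the Mukai lattice also gets postcomposed with $(\_)^\vee$. Both changes are governed by $\ch_k(\cE^\vee)=(-1)^k\ch_k(\cE)$; tracking which degrees of $\bfv(\cE^\vee)\cdot p_X^*u$ survive into $[p_{M_\sigma *}(\_)]_2$ shows that these two sign contributions combine to a single global factor of $-1$ across $v^\perp$ (respectively $v^\perp/v$), which is precisely $\epsilon=-1$.

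The main obstacle will be the sign bookkeeping in the anti-autoequivalence case: one must check that the sign twist produced by $\cE\leadsto\cE^\vee$ differs, uniformly on every graded piece of $v^\perp$, from the twist produced by $(\_)^\vee$ by exactly one global sign. Once this is verified, the commutativity of both diagrams is a routine consequence of the projection formula and the naturality of the Chern character.
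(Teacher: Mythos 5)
Your proposal is correct and follows essentially the same route as the paper: the autoequivalence case is the naturality argument of Ouchi's Proposition 3.5 (which the paper simply cites), and the anti-autoequivalence case is reduced to the dual functor $\DD$, where the sign $\epsilon=-1$ comes from comparing the K\"unneth components of $\bfv(\cE)$ and $\bfv(\cE^\vee)$ in $\rH^2(M)\otimes\rH^{4-2j}(X)$ (which differ by $(-1)^{j+1}$) against the sign $(-1)^j$ that $(\_)^\vee$ puts on $u$ in degree $2j$, yielding the uniform global factor $-1$ you predict. The only thing you defer --- the degree-by-degree sign check --- is exactly the three-line computation the paper carries out, so the proposal is sound; just note that the two moduli maps induced by $\Phi$ and by $\DD\circ\Phi$ are not literally equal but differ by the dualization isomorphism, which is precisely why the $\Phi=\DD$ case must be handled separately.
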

\begin{proof}
    When $\Phi$ is a derived autoequivalence, this  is  essentially \cite[Proposition 3.5]{Ouchi18}.
   If $\Phi$ is a derived anti-autoequivalence,  as $\DD\circ \Phi\in \Aut(\rD^{(1)}(\srX))$,  it suffices to deal with the case $\Phi=\DD$ and show that $\epsilon=-1.$

%The action of $\DD$ on $\widetilde{\rH}(X)$ is  $$(r,c,s)^\vee:=\DD^{\widetilde{\rH}}(r,c,s)= (r,-c,s).$$ 

For simplicity, we write  $M=M_\sigma(\srX, v)$, $N=M_\tau(\srX, w)$ and assume that there is a universal object $\cE$ on $\srX\times M$.
Let $p:\srX\times M\to M$ and $q:\srX\times M\to \srX$ be the natural  projections. Recall that the map $\theta_{\sigma,v}$ is induced by $\cE^\vee$ and defined by the composition
\begin{equation}
    v^\perp \hookrightarrow \widetilde\rH(\srX) \xrightarrow{\Phi_{\cE^\vee}}  \rH^*(M)  \xrightarrow{[\_]_2} \rH^2(M),
\end{equation}
which sends $u$ to $[p_*(\bfv(\cE^\vee)\cdot q^*u)]_2$.
One observes that $[p_*(\bfv(\cE^\vee)\cdot q^*u)]_2$ only depends on the K\"unneth component $$[\bfv(\cE^\vee)\cdot q^*u]_{\rH^2(M)\otimes \rH^4(\srX)}=\bigoplus_{0\leq j\leq 2}[\bfv(\cE^\vee)]_{\rH^2(M)\otimes \rH^{4-2j}(X)}\cdot [q^*u]_{\rH^0(M)\otimes \rH^{2j}(\srX)},$$ where $[\_]_{\rH^i(M)\otimes \rH^j(\srX)}$ is the projection $\rH^*(M\times \srX)\to \rH^i(M)\otimes \rH^j(\srX).$

On the other hand, the map $\theta_{\tau,w}$ is induced by $\phi_*(\cE^{\vee})^\vee$. Since $\phi$ is an isomorphism, we find that $\phi_*(\cE^{\vee})^\vee=\phi_*(\cE)$.
By a straightforward computation, we obtain $$[\bfv(\cE)]_{\rH^2(M)\otimes \rH^{4-2j}(\srX)}=(-1)^{j+1}[\bfv(\cE^\vee)]_{\rH^2(M)\otimes \rH^{4-2j}(\srX)},$$for $j=0,1,2$.  Then one can observe the following equality:
$$[\bfv(\cE^{\vee\vee})\cdot q^*(-u^\vee)]_{\rH^2(M)\otimes \rH^{4-2j}(\srX)}=[\bfv(\cE^\vee)\cdot q^*u]_{\rH^2(M)\otimes \rH^{4-2j}(\srX)},$$which implies that $\theta_{\tau,w}(-u^\vee)=\phi_*\theta_{\sigma,v}(u)$. Since $\Phi^{\widetilde\rH}=(\_)^\vee$, we obtain that $\epsilon=-1$ for $\Phi=\DD.$
\end{proof}

\subsection{Bloch's conjecture for reflective autoequivalence of type I}
In fact, we have a much stronger result.
\begin{theorem}\label{thm:lagk3}
Let $\Phi:\rD^{(1)}(\srX)\rightarrow \rD^{(1)}(\srX)$ (resp.~$\Phi:\rD^{(1)}(\srX)\rightarrow \rD^{(-1)}(\srX)^{\rm op}$)  be a derived (anti-)autoequivalence. Assume that the action on the transcendental lattice is $\pm \id$.
Then $\Phi^{\CH}=\pm\id$ (resp. $\mp \id$) if  $\Phi^{\widetilde \rH}(v)=\pm v$ for some isotropic vector $v\in \widetilde{\NS}(\srX)$ and $\Phi^{\widetilde \rH}(u)=\pm u$ for some $u\in v^{\perp}/v$ with $u^2\geq 0$.
\end{theorem}
\begin{proof}
Let $\srX'\to X'$ denote the moduli stack  $\srM_\sigma(\srX, v)\to M_\sigma(\srX,v)$ of $\sigma$-stable objects with Mukai vector $v$, where $\sigma$ is a $v$-generic stability condition.  Since $v^2=0$, $X'$ is also a $K3$ surface.
 Note that the Fourier-Mukai transform $\rD^{(1)}(\srX)\to \rD^{(1)}(\srX')$  induces an autoequivalence 
\begin{equation}
    \Psi:\rD^{(1)}(\srX')\to \rD^{(1)}(\srX')
\end{equation}
on $\srX'$.  There is a commutative diagram
\begin{equation*}
    \begin{tikzcd}
\CH_0(X)_{\hom}\arrow[rr, "\Phi^{\CH}"] \arrow[d, "{\cong }"] &  & \CH_0(X)_{\hom}  \arrow[d, "{\cong}"]\\
\CH_0(X')_{\hom} \arrow[rr, "\Psi^{\CH}"]                                         &  & \CH_0(X')_{\hom}                                 
\end{tikzcd},
\end{equation*}
 and it suffices to show that $\Psi^{\CH}=\pm \id$ on $\CH_0(X')_{\hom}$. 
 
 For simplicity, we may assume $\Phi^{\widetilde{\rH}}(v)=v$ otherwise we can replace $\Phi$ by $\Phi[1]$. From our construction, the Hodge isometry $\Psi^{\widetilde{\rH}}:\widetilde{\rH}(\srX',\ZZ)\to \widetilde{\rH}(\srX',\ZZ)$ sends $(0,0,1)$ to $(0,0,1)$ and hence $\Psi^{\widetilde{\rH}}(1,0,0)$ must be of the form $e^l=(1, l, \frac{l^2}{2})$ for some $l\in \rH^2(X',\ZZ)$.  Let us identify $ v^\perp /v$ with $ \rH^2(X',\ZZ)$. 
\vspace{.2cm}

\noindent \textbf{Claim A}: there exists a derived  (anti-)equivalence $$\widehat{\Psi}:\rD^{(1)}(\srX')\to \rD^{(1)}(\srX')~(\hbox{or}~\rD^{(1)}(\srX')\to\rD^{(-1)}(\srX')^{\rm op}),$$ 
such that
\begin{itemize}
    \item $\widehat{\Psi}^{\widetilde{\rH}}=\id_{\rU}\oplus \widehat{\Psi}^{\rH^2}$ is diagonal. 
    \item $\widehat{\Psi}^{\CH}=\Psi^{\CH}$ (resp.~$-\Psi^{\CH}$).
    \item $\widehat{\Psi}^{\rH^2}(u)=u$. 
\end{itemize}
\vspace{.1cm}

    We first treat the case that $\Psi:\rD^{(1)}(\srX)\to \rD^{(1)}(\srX)$ is an autoequivalence.   Fix a $B$-field $B$ of $\srX'$, we may also use the notation $\widetilde{\rH}(X',B,\ZZ)=\widetilde{\rH}(\srX')$ to keep track the change of $B$-fields. Let $\sigma_{X'}\in \rH^{2,0}(X')$ be the holomorphic $2$-form.  Then $\rH^{2,0}(\srX')$ is spanned by  $\sigma_{X'}+B\wedge \sigma_{X'}$.     If $\Psi$ is symplectic, we have
\begin{center}
    $\Psi^{\widetilde{\rH}}(\sigma_{X'}+B\wedge\sigma_{X'})=\sigma_{X'}+B\wedge\sigma_{X'}$ and $\Phi^{\widetilde{\rH}}(B\wedge\sigma_{X'})=B\wedge\sigma_{{X'}}$.
\end{center}
 Therefore $\Phi^{\widetilde{\rH}}(\sigma_{X'})=\sigma_{X'}$ and $\ell\in \sigma_{X'}^\perp$.  This means $l=c_1(L)\in\NS(X')$  for some $L\in\Pic(X')$. One can take 
$$\widehat{\Psi}: \rD^{(1)}(\srX')\xrightarrow{\Psi}  \rD^{(1)}(\srX') \xrightarrow{L^{-1}\otimes (\underline{~~})} \rD^{(1)}(\srX'). $$ It is as desired. 

If $\Phi$ is anti-symplectic, we have $l+2B\in \NS(X')_\QQ$ by a similar computation. Since $l$ is integral, we obtain 
$$2[\srX']=0\in \Br(X').$$
We may assume that $\srX'$ is a $\mu_2$-gerbe and $2B\in \rH^2(X',\ZZ)$, otherwise we can replace $\srX'$ by a $\mu_2$-gerbe $\srX''\to X'$ such that there is an equivalence $\rD^{(1)}(\srX'')\cong \rD^{(1)}(\srX')$ whose action on the Mukai lattice preserves  $(0,0,1)$.  Then  we have $\rD^{(1)}(\srX')=\rD^{(-1)}(\srX')$ and the class
$l+2B=c_1(L)\in \NS(X')$ is integral. 
We can take the composition 
$$\widehat{\Psi}: \rD^{(1)}(\srX')\xrightarrow{\Psi}  \rD^{(1)}(\srX') \xrightarrow{L^{-1}\otimes (\underline{~~})} \rD^{(1)}(\srX'),$$  which induces  a diagonal transform $$\id_{\rU}\oplus \widehat{\Psi}^{\rH^2}:\widetilde\rH(X',B,\ZZ)\to \widetilde\rH(X',B,\ZZ).$$
It is clear that $\Psi^{\CH}=\widehat{\Psi}^{\CH}$ as they differ by tensoring a line bundle.  
 \vspace{.1cm}

Now we  deal with the case  $\Psi$ is an anti-autoequivalence. A natural idea is to compose  $\Psi$ with the derived dual $\DD$ and reduce  it to the case of auoequivalences. In our situation,  suppose $\Psi$ is symplectic, a similar computation 
shows that $$l-2B\in \NS(X')_\QQ.$$
As before, we may assume that $\srX'\to X'$ is a $\mu_2$-gerbe. Then the derived dual functor $$\DD:\rD^{(1)}(\srX')\to  \rD^{(-1)}(\srX')^{\rm op}$$ is an anti-autoequivalence and we can return to the autoequivalence case by composing $\DD$.  %As $\DD\circ \Psi[1]$ is an anti-symplectic autoequivalence with the action on $\widetilde\rH(\srY)$ as $\id_{\rU}\oplus \Phi^{\rH^2}$, we have $(\Psi)^{\CH}=-(\DD\circ \Psi[1])^{\CH}=-\id$ by Claim B. 

When $\Psi$ is  anti-symplectic, then we have $l=c_1(L)\in \NS(X')$. 
We can take the composition 
$$\widehat{\Psi}: \rD^{(1)}(\srX')\xrightarrow{\Psi}  \rD^{(1)}(\srX')^{\rm op} \xrightarrow{L^{-1}\otimes (\underline{~~})} \rD^{(1)}(\srX')^{\rm op}\xrightarrow{\DD[1]}  \rD^{(-1)}(\srX'),$$ which
induces  a diagonal map $\id_{\rU}\oplus \widehat{\Psi}^{\rH^2} :\widetilde\rH(X',B,\ZZ)\to \widetilde\rH(X',-B,\ZZ).$ In this case, we have  $\Psi^\CH=-\widehat{\Psi}^\CH.$ 

At last,  we have $\widehat{\Psi}^{\rH^2}(u)=\pm u$.   We can compose it with the derived dual functor $\DD$ and obtain the desired $\widehat{\Psi}$.   This finishes the proof of Claim A.
\vspace{.1cm}
 
Now, since $\Psi^{\CH}=\pm \widehat{\Psi}^{\CH}$, we can replace $\Psi$ by $\widehat{\Psi}^{\CH}$ and   reduce to consider the case $\Psi^{\widetilde{\rH}}$ is diagonal. 
The assertion  follows from the claim below.  
\vspace{.1cm}

\noindent \textbf{Claim B: there exists a (anti-)symplectic automorphism $\varphi\in \Aut(X')$ such that   $\varphi^{\CH}=\Psi^{\CH}=\pm \id$.}
~\vspace{.1cm}

Recall that under our assumption,  $\Phi^{\widetilde \rH}$ fixes a nonzero vector  $u\in  v^{\perp}/v$ with $u^2\geq 0$.
 By Theorem \ref{thm:HOYBM}, we can regard $u$ as a primitive effective class in $\NS(X')$.
We may further assume that $u$ is a nef class on $X'$. If not, there exist a sequence of spherical twists $T_{C_1}, \ldots ,T_{C_m}$ with $C_i\in \NS(X')$ such that $u'=T_{C_1}\circ \ldots \circ T_{C_m}(u)$ is nef in $\NS(X')$ (\cf.~\cite[Corollary 3.2.9]{Huy16}). Note that $T_{C_i}$ can be naturally lifted  to a spherical twist for $\rD^{(1)}(\srX')$.  So we can  replace $\Psi$ by $$T_{C_1}\circ \ldots \circ T_{C_m}\circ \Psi\circ (T_{C_1}\circ \ldots \circ T_{C_m} )^{-1},$$
which preserves the nef class $u'$ and does not change the action on $\CH_0(X')_{\hom}$.  We can prove the claim by cases.

$\bullet$ In the case $u^2>0$, if $u$ is already ample, the Hodge isometry $\Psi^{\rH^2}:\rH^2(X',\ZZ)\to \rH^2(X',\ZZ)$ given by 
    \begin{equation}\label{eq:H2}
        \rH^2(X',\ZZ) \cong v^\perp/v\xrightarrow{\Phi^{\widetilde{\rH}}} v^\perp/v\cong \rH^2(X',\ZZ),
    \end{equation}
can be lifted to an automorphism $\varphi\in\Aut(X')$ of finite order. Then $\varphi^{\CH}=\pm \id $  by the main results of Huybrechts and Voisin in \cite{Voi12, Huy12'}.  
Note that if an automorphism $\varphi\in \Aut(X')$ acts on $\rH^{2,0}(X')$ as $\pm \id$, it can be naturally  lifted to a derived (anti-)equivalence $$\varphi_*:\rD^{(1)}(\srX')\to \rD^{(\pm 1)}(\srX')$$ for any $\srX'\to X'$. 
By Theorem \ref{thm:aut-inj} and Remark \ref{rmk:anti}, we have $\varphi^{\CH}=\Psi^{\widetilde{\CH}}$ as well.

% Therefore $\Psi^\CH=\pm \id$ by Lemma \ref{aux}.

%Here, we view the Fourier-Mukai kernel $${\rm cone}(\DD(\cO_{C_i}(-1))\boxtimes \cO_{C_i}(-1)\rightarrow \Delta)$$ as objects in $\rD^{(-1,1)}(\srY\times \srY)$ and we also have $T_{C_i}^\CH=\id$.
%as the derived dual functor $$\DD:\rD^b(\srY)\to \rD^{(-1)}(\srY)^{\rm op}$$ is not an anti-autoequivalence, we can not apply Theorem \ref{cohomtochow} directly to $\DD\circ \Phi'$. we can reduce to the autoequivalence case as the following.

If $u$ is big and nef but not ample,  let $C$ be the exceptional curve (i.e. ($u\cdot C)=0$) and let $C_1,\ldots, C_n$ be the irreducible components of $C$.  Let $h\in\NS(X')$ be an ample class. If $\Psi^{\rH^2}(h)$ is not ample, then there exists $1\le i\le n$ such that $(\Psi^{\rH^2}(h),C_i)<0$. Then we may replace $\Psi$ by $T_{C_i}\circ \Psi$. Such process must terminate after finitely many steps since the integer $(h,\Psi^{\rH^2}(h))$ is positive and decreasing. Thus, one can find an automorphism $\varphi$ which is a lift of $\Psi^{\rH^2}$ (up to composing spherical twist) and it will fix the big and nef class $u$. Such automorphism is of finite order and the assertion follows similarly.

$\bullet$  In the case $u^2=0$,   $u$  is base point free and defines an elliptic fibration $X'\to \PP^1$ (\cf.~\cite[Proposition 3.10]{Huy16}).   Similar to the previous case, $\Psi^{\rH^2}$ (up to composing some spherical twists) can be lifted to an automorphism $\varphi$ which preserves the elliptic fibration.  The assertion then follows from the main result of \cite{DL22}. See also Remark \ref{rmk:ant-ell}.  This proves Claim B. 
\vspace{.05cm}

 \end{proof}

\begin{remark}\label{rmk:ant-ell}
 In \cite{DL22}, Du and Liu mainly deal with the case $\varphi$ is symplectic and preserves an elliptic fibration $X'\to \PP^1$, but their argument can still apply to the anti-symplectic case. Roughly speaking,  let $J(X')\to \PP^1$ be the relative Jacobian of $X'\to \PP^1$,  the induced automorphism  $\varphi':J\to J$ is of finite order by \cite[Lemma 4.6 \& Corollary 2.3]{DL22}.
Hence  $(\varphi')^{\CH}=-\id$. It follows from \cite[Lemma 4.3 \& 4.4]{DL22} that $\varphi^{\CH}=-\id$. 
\end{remark}

%Let $\Phi$ be a derived autoequivalence  of $X$ and let  $v\in\widetilde\rH(X,\ZZ)$ be a primitive vector fixed by $\Phi^{\widetilde\rH}$ with $v^2\geq 0$.Let $\sigma$ be a $v$-generic stability condition  and let $M=M_\sigma(X,v)$ be the moduli space of $\sigma$-stable objects in $\rD^b(X)$. 
 
% such that $\tau=\Phi(\sigma)$ and $\sigma$ are in the same chamber with respect to $v$. Hence $M_\sigma(X,v)$ is naturally identified with $M_\tau(X,v)$, and 

\subsection{Bloch's conjecture for reflective autoequivalence of type II}From now on, we may assume that $\srX=X$ is untwisted.

\begin{lemma}\label{modulitok3}
If $E \mapsto \Phi(E)$ induces an automorphism 
    \begin{equation*}
            f:M\to  M,
    \end{equation*}  so that $f_\ast$ acts trivially on $\CH_0(M)$, then $\Phi^\CH$ is the identity. 
\end{lemma}
\begin{proof}
By our assumption, we have  $c_2(E)=c_2(\Phi(E))$ according to Theorem \ref{thm:K3HK}.
    It suffices to show that the Chern class map
    \begin{equation}\label{eq:chernM}
        \CH_0(M)_{\hom}\rightarrow\CH_0(X)_{\hom}
    \end{equation} is surjective. 
    In fact, the surjectivity of \eqref{eq:chernM} follows from the obvious surjectivity for $X^{[n]}$ and \cite[Section 2.2]{SYZ20}.
\end{proof}

Then we have 
\begin{theorem}\label{thm:maingeo}Assume that $\srX=X$ is untwisted.
 If $\Phi$ is a reflective (anti-)autoequivalence, the map $$\Phi^{\CH}:\CH_0(X)_{\hom}\to \CH_0(X)_{\hom}, $$ is $\pm \id$. 
\end{theorem}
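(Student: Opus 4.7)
The plan is to deduce Theorem~\ref{thm:maingeo} directly from Theorem~\ref{lagk3} by exhibiting, in each of the two types of reflexive (anti-)autoequivalence, an isotropic Mukai vector together with a nonzero non-negative-square fixed class in its orthogonal quotient. Since $\Phi^{\widetilde{\rH}}$ is by assumption a reflexive involution, its fixed locus in $\widetilde{\rH}(X,\ZZ)$ is sufficiently large that such vectors can be manufactured by elementary lattice arithmetic.

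In type (1), where $v^2=\pm 2$ and $\Phi^{\widetilde{\rH}}=s_v$ fixes the codimension-one sublattice $v^\perp\subset\widetilde{\rH}(X,\ZZ)$ pointwise, the natural choice is a primitive isotropic $w\in v^\perp\cap\widetilde{\NS}(X)$; generically such a $w$ exists as soon as this sublattice has rank at least two, i.e.\ $\rank(\NS(X))\geq 1$. Then $\Phi^{\widetilde{\rH}}(w)=w$, and the image of $v^\perp\cap w^\perp$ in $w^\perp/w$ is a rank-$21$ sublattice pointwise fixed by $\Phi^{\widetilde{\rH}}$; its indefinite signature guarantees an algebraic class $u$ with $u^2\geq 0$. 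Theorem~\ref{lagk3} applied to the pair $(w,u)$ then delivers $\Phi^{\CH}=\pm\id$, with the sign dictated by whether $\Phi$ is an auto- or anti-autoequivalence.

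In type (2), where $v^2=0$, the vector $v$ itself plays the role of the isotropic vector in Theorem~\ref{lagk3} (under the ``sends $v$ to $-v$'' variant). Since $\Phi^{\widetilde{\rH}}$ fixes $v^\perp/v$ pointwise, any nonzero algebraic $u\in v^\perp/v$ with $u^2\geq 0$ is automatically fixed by $\Phi^{\widetilde{\rH}}$. The quotient lattice $(\widetilde{\NS}(X)\cap v^\perp)/\langle v\rangle$ has signature $(1,\rank(\NS(X))-1)$, so it contains such a $u$ whenever $\rank(\NS(X))\geq 1$. Theorem~\ref{lagk3} then again yields $\Phi^{\CH}=\pm\id$.

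The principal obstacle is the case $\rank(\NS(X))=0$: in type (1), $v^\perp\cap\widetilde{\NS}(X)$ has rank one and typically contains no isotropic class, while in type (2) the quotient $(\widetilde{\NS}(X)\cap v^\perp)/\langle v\rangle$ becomes trivial. To cover this remaining case one would choose a fixed $w$ with $w^2>0$, giving a higher-dimensional $K3^{[n]}$-type Bridgeland moduli space $M=M_\sigma(X,w)$ on which $\Phi$ induces an (anti-)symplectic birational (anti-)automorphism of finite order, and then invoke a $K3^{[n]}$-type extension of the Huybrechts--Voisin theorem on Bloch's conjecture. The remainder of the argument otherwise mirrors the proof of Theorem~\ref{lagk3} step by step.
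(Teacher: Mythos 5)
Your treatment of type (2) coincides with the paper's: $v$ itself is the isotropic vector sent to $-v$, every algebraic class in $v^\perp/v$ is automatically fixed, and since $(\widetilde{\NS}(X)\cap v^\perp)/\ZZ v$ is a hyperbolic lattice of rank $\rank(\NS(X))\geq 1$ it contains a class of non-negative square, so Theorem \ref{lagk3} applies. The gap is in type (1), and it is genuine. Your argument hinges on the existence of a primitive isotropic class $w\in v^\perp\cap\widetilde{\NS}(X)$, which you claim holds once this sublattice has rank two. That is false: a rank-two even lattice of signature $(1,1)$ represents zero only when its discriminant is minus a perfect square. Concretely, if $\NS(X)=\ZZ L$ with $L^2=2n$ and $v\in\widetilde{\NS}(X)$ is primitive with $v^2=2$, then $v^\perp\cap\widetilde{\NS}(X)$ has signature $(1,1)$ and determinant $-nk^2$ with $k\in\{1,2\}$, hence is anisotropic whenever $n$ is not a perfect square (e.g.\ $n=2$); for $v^2=-2$ it is positive definite and never isotropic. (Compare the computation of $v^\perp$ in Proposition \ref{lem:indexinf}.) Even when $w$ exists, your second input fails for Picard rank one: the fixed \emph{algebraic} part of $w^\perp/w$ is $(v^\perp\cap w^\perp\cap\widetilde{\NS}(X))/\ZZ w$, of rank $\rank(\NS(X))-1$, so there is no nonzero algebraic $u$ to feed into Theorem \ref{lagk3} (whose proof needs $u\in\NS(M_\sigma(X,w))$). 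You locate the residual difficulty at $\rank(\NS(X))=0$, which never occurs for a projective K3; the hard case is small positive Picard rank, and your fallback for it — Bloch's conjecture for finite-order (anti-)symplectic birational automorphisms of $K3^{[n]}$-type varieties — is precisely the open Conjecture \ref{conj3} that this circle of results is meant to establish, so invoking it is circular.

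The paper's argument for $v^2=2$ is structurally different and works uniformly in the Picard rank: by Lemma \ref{fixedsc} one replaces $\Phi[1]$ by an anti-autoequivalence $\Psi$ with $\Psi^{\widetilde\rH}=-\Phi^{\widetilde\rH}$ fixing a $v$-generic stability condition $\sigma$; then $\Psi$ induces an honest automorphism $f$ of the fourfold $M_\sigma(X,v)$ acting trivially on $\rH^2$ by Lemma \ref{lem:cohom}, whence $f=\id$ because automorphisms of $K3^{[n]}$-type varieties inject into $\rO(\rH^2)$; Lemma \ref{modulitok3} and Theorem \ref{thm:aut-inj} then give $\Psi^{\CH}=\id$ and so $\Phi^{\CH}=-\id$. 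No isotropic algebraic class is needed. The case $v^2=-2$ is handled by Huybrechts' theorem that spherical twists act trivially on $\CH_0(X)_{\hom}$ together with Theorem \ref{thm:aut-inj}, another ingredient absent from your proposal.
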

\begin{proof}
  By assumption,  there is $v\in \widetilde{\NS}(X)$ satisfying the conditions in Definition \ref{def:rde}.  We can divide the proof into 3 cases. 
    
  1)  If  $v^2=-2$, the reflection can be lifted to a spherical twist, which acts on $\CH_0(X)$ as identity. 
    
   2)  If $v^2=2$, we apply Lemma \ref{fixedsc} to $\Phi[1]$. So there exists another derived anti-autoequivalence $\Psi$, such that $\Psi^{\widetilde{\rH}}=-\Phi^{\widetilde{\rH}}$ and there exists a $v$-generic stability condition fixed by $\Psi$. The derived anti-autoequivalence $\Psi$ induces an automorphism:
    $f\in \Aut(M_\sigma(X, v))$ with $$f_*:\rH^2(M_\sigma(X, v),\ZZ)\rightarrow \rH^2(M_\sigma(X, v),\ZZ)$$ equaling identity by Lemma \ref{lem:cohom}.
    Since $M_\sigma(X, v)$ is of $K3^{[n]}$-type, we obtain that $f=\id.$
    Therefore, we have  $$\Phi^{\CH}=-\Psi^{\CH}=-\id,$$ by Lemma \ref{modulitok3} and Theorem \ref{thm:aut-inj}.

    3) if $v^2=0,$ there exists $w\in v^\perp/v$ such that $\Phi^{\widetilde{\rH}}(w)=w$ with $w^2>0$.  
    Then the assertion follows from  Theorem \ref{thm:lagk3}.
\end{proof}

Theorem \ref{thm1}  is an immediate consequence of  Lemma \ref{lem:lift-involution}, Theorem \ref{thm:aut-inj} and Theorem \ref{thm:maingeo}.  
\vspace{.1cm}

For Theorem \ref{thm:thm2}, as we will show later, when $\rank~\NS(X)\geq 3$ or $\NS(X)$ contains a hyperbolic plane $\rU$,  any $\Phi^{\widetilde{\NS}}$ can be decomposed into  a  product of  reflective involutions.  This will be proved in \S 6 and \S7.

\subsection{An alternative proof via moduli space of vector bundles} 
For reflections of square $2$ vectors,   there is another approach by using only Giesker stability conditions. 
Let $T_{\cO_X}:\rD^b(X)\to \rD^b(X)$ be the spherical twist around $\cO_X$ and $v=(r,D,s)$ be a Mukai vector. Let $d=\frac{v^2}{2}+1$ and $H$ be an ample line bundle. 
In \cite{Yoshi09}, Yoshioka has shown that if we assume 
\begin{equation}\label{eq:larged}
    D\cdot H>\max\{ 4r^2+1, 2r(v^2 H^2-D^2 H^2)\},  ~s>2d,  \hbox{~if $r>0$},
\end{equation}
or \begin{equation}\label{eq:larges}
    s>\max\left\{2d, \frac{(D\cdot H)^2}{2H^2}+1\right\}, \hbox{~if $r=0$},
\end{equation}
then $T_{\cO_X}[1]$ induces an isomorphism 
$$ \varphi: M_H(v)\to M_{H}(\widehat{v}),$$
such that for any stable sheaf $E\in M$, $\varphi(E)=T_{\cO_X}(E)[1]$ is a $\mu_H$-stable vector bundle with Mukai vector $\widehat{v}=(s,-D,r)$. 
In fact, any $\mu_H$-stable sheaf $E$ with $\rank(E)\geq \frac{\bfv(E)^2}{2}+2$  is a vector bundle. 

Next, we have the following result due to Huybrechts.
\begin{proposition}(\cf.~\cite[Proposition 6.1]{Huy08})\label{prop:aut-stable}
    Let $X'$ be a fine moduli space of $\mu_H$-stable isotropic vector bundles on $X$ and let $\Phi:\rD^b(X)\to \rD^b(X')$ be the corresponding Fourier-Mukai transform.  Then there exists a polarization $H'$ on $X'$, such that for any $\mu_H$-stable vector bundle $E$ with $\bfv(E)^2\geq 2$, we have $$\Phi(E)\cong F[-1],$$ where $F$ is a $\mu_{H'}$-stable vector bundle on $X'$.
\end{proposition}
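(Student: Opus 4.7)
The plan is to follow the argument of \cite[Proposition 6.1]{Huy08}, splitting the proof into three steps: the choice of polarization $H'$, the concentration and local freeness of $\Phi(E)$, and the $\mu_{H'}$-stability of the resulting sheaf.

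First, I would choose $H'$ using the induced Hodge isometry $\Phi^{\widetilde{\rH}}: \widetilde{\rH}(X) \to \widetilde{\rH}(X')$ coming from the Mukai vector of the universal family $\cU$ on $X \times X'$. Specifically, take $H'$ to be an ample class on $X'$ lying in the same connected component of the positive cone as the transported direction $\Phi^{\widetilde{\rH}}(H)$; this ensures that the slope functions on both sides are compatible under the Fourier--Mukai correspondence. Such a class exists because $X'$ is a smooth projective K3 surface whose ample cone contains classes arbitrarily close to $\Phi^{\widetilde{\rH}}(H)$ in the positive cone.

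Second, for each $x' \in X'$ with associated $\mu_H$-stable bundle $U_{x'}$, base change identifies the stalk of the cohomology sheaf $\cH^i(\Phi(E))$ at $x'$ with $\mathrm{Ext}^i(U_{x'}, E)$. Stability of $E$ and $U_{x'}$ together with Serre duality on $X$ force $\Hom(U_{x'}, E) = 0$ and $\mathrm{Ext}^2(U_{x'}, E) = 0$ whenever $\mu_H(U_{x'}) \neq \mu_H(E)$, which is the generic situation. Riemann--Roch then pins the dimension of $\mathrm{Ext}^1(U_{x'}, E)$ to the constant value $\langle \bfv(U_{x'}), \bfv(E)\rangle$ on a dense open subset of $X'$. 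A semi-continuity and flatness argument --- using $\bfv(E)^2 \geq 2$ to rule out the exceptional jumping that would occur if $\Phi(E)$ were spherical --- upgrades this to $\Phi(E) \cong F[-1]$ globally for a coherent sheaf $F$, and the constancy of fibre dimensions of $\cH^1(\Phi(E))$ forces $F$ to be locally free.

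The main obstacle is the $\mu_{H'}$-stability of $F$. A destabilizing subsheaf $F' \subset F$ would, after applying the inverse Fourier--Mukai transform, fit into a distinguished triangle whose cohomology sheaves provide either a subsheaf or a quotient of $E$ on $X$. Comparing slopes via the relation $\mathrm{ch}(\Phi^{-1}(F')) = \bfv(\cU)_* \mathrm{ch}(F')$ and the explicit action of $\Phi^{\widetilde{\rH}}$ in degree two, one obtains a contradiction with the $\mu_H$-stability of $E$, provided $H'$ has been chosen close enough to $\Phi^{\widetilde{\rH}}(H)$ in the positive cone. This is the delicate step, as one must handle both torsion and torsion-free destabilizing subsheaves and guarantee strictness of the slope inequality; this is precisely where both the careful choice of $H'$ and the numerical bound $\bfv(E)^2 \geq 2$ enter essentially.
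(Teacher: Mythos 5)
The paper does not actually prove this proposition: it is imported from \cite[Proposition 6.1]{Huy08} and used as a black box in the alternative argument via moduli of vector bundles, so there is no in-paper proof to compare against. Judged on its own terms, your outline follows the standard template (choice of $H'$, concentration in one degree, then stability), but it has a genuine gap at the concentration step.

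The problem is the claim that ``stability of $E$ and $U_{x'}$ together with Serre duality force $\Hom(U_{x'},E)=0$ and $\mathrm{Ext}^2(U_{x'},E)=0$ whenever $\mu_H(U_{x'})\neq\mu_H(E)$, which is the generic situation.'' First, there is no genericity in $x'$: all the bundles $U_{x'}$ have the same Mukai vector (this is what makes $X'$ two-dimensional), hence the same slope, so either $\mu_H(U_{x'})=\mu_H(E)$ for every $x'$ or for none. Second, and more seriously, a slope comparison kills only one of the two groups: if $\mu_H(U_{x'})>\mu_H(E)$ then indeed $\Hom(U_{x'},E)=0$, but $\mathrm{Ext}^2(U_{x'},E)\cong\Hom(E,U_{x'})^\vee$ is perfectly compatible with that inequality (a nonzero map $E\to U_{x'}$ has image of slope between $\mu_H(E)$ and $\mu_H(U_{x'})$), and symmetrically in the other direction. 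Killing the remaining group is exactly where the real content of the Yoshioka--Huybrechts argument lies; it is the reason the paper sets up the numerical conditions \eqref{eq:larged}--\eqref{eq:larges} and, in Lemma \ref{DVB}, pre-composes with $T_{\cO_X}$ and twists by $H^{\otimes n}$ so as to push the Mukai vectors into a range where the relevant $\Hom$ vanishes. Your appeal to $\bfv(E)^2\geq 2$ does not substitute for this: that hypothesis only excludes spherical and isotropic classes and does not prevent, say, an embedding $U_{x'}\hookrightarrow E$ of strictly smaller slope, which would already give $\cH^0(\Phi(E))\neq 0$. The final stability step is likewise only a plan --- $\Phi^{-1}(F')$ is a complex, and extracting from its cohomology a sub- or quotient sheaf of $E$ with controlled slope is precisely the delicate point you defer --- so as written the proposal does not constitute a proof.
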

For a derived equivalence $\Phi:\rD^b(X)\to \rD^b(X')$, $X'$ may not be a moduli space of $\mu_H$-stable vector bundle.
There is a technical issue that we need to make sure $\Phi^{-1}(k(x))$  is  a shift of a vector bundle for any skyscraper sheaf $k(x)$. To overcome this problem, we can use the spherical twist $T_{\cO_X}$ and tensor line bundles to suitably modify $\Phi$. This is valid  when $\rank (\Pic(X))=1$.
\begin{lemma} \label{DVB}
   Assume $\Pic(X)\cong \ZZ H$ and let $\Psi:\rD^b(X)\to \rD^b(X)^{\rm op}$ be a derived anti-autoequivalence such that $\Psi^{\widetilde{\rH}}$ is a $(+2)$-reflection $s_w$. Then there is a derived anti-autoequivalence $\Phi$ such that 
   \begin{itemize}
      \item $\Psi^\CH=\Phi^{\CH}$,
       \item $(\DD\circ \Phi)^{-1}(k(x))$ is a shift of a vector bundle for any $x\in X$,
       \item $\Phi^{\widetilde{\rH}}$ is also a $(+2)$-reflection $s_{w'}$, such that $M_H(w')$ consists of vector bundle.
   \end{itemize}
\end{lemma}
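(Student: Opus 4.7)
The plan is to construct $\Phi$ by modifying $\Psi$ by pre- and post-composition with autoequivalences that act as the identity on $\CH_0(X)_{\hom}$ but move the reflection center $w$ of $\Psi^{\widetilde\rH}=s_w$ into a region of the Mukai lattice where Yoshioka's stability criteria and Proposition~\ref{prop:aut-stable} apply simultaneously.

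The two ingredients are the spherical twist $T_{\cO_X}$ and tensor product with a power $H^{\otimes k}$. Both are autoequivalences of $\rD^b(X)$. Both act as the identity on $\CH_0(X)_{\hom}$: the former because it is the reflexive autoequivalence attached to the $(-2)$-vector $(1,0,1)$ (by the result of Huybrechts cited in the introduction, or by Theorem~\ref{thm:maingeo}), the latter because tensoring with a line bundle preserves the rational equivalence class of every skyscraper. Their actions on the Mukai lattice, however, are explicit and nontrivial: $T_{\cO_X}$ sends $(r,D,s)\mapsto(-s,D,-r)$, and $H^{\otimes k}\otimes(-)$ sends $(r,D,s)\mapsto(r,\,D+rkH,\,s+kH\cdot D+rk^2 H^2/2)$. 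Using these as pre- and post-factors flanking $\Psi$, the resulting $\Phi$ remains a derived anti-autoequivalence, still satisfies $\Phi^{\CH}=\Psi^{\CH}$, and has $\Phi^{\widetilde\rH}=s_{w'}$ for some $w'=(r',D',s')$ in the orbit of $w$; in particular $w'$ is again a $(+2)$-vector.

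It then remains to choose the pre- and post-factors so that $w'$ lies in Yoshioka's good region \eqref{eq:larged}--\eqref{eq:larges}, which will guarantee that $M_H(w')$ consists of $\mu_H$-stable vector bundles (the third bullet). For the second bullet, a direct computation using $\Phi^{\widetilde\rH}=((\DD\circ\Phi)^{\widetilde\rH})^\vee$ and $(0,0,1)^\vee=(0,0,1)$ shows that the Mukai vector of $(\DD\circ\Phi)^{-1}(k(x))$ equals $s_{w'}((0,0,1))=((r')^2,\,r'D',\,1+r's')$; this vector has square $0$ and rank growing quadratically in $r'$. Once $r'$ is taken large enough, the numerical hypothesis of Proposition~\ref{prop:aut-stable} is easily satisfied, so $(\DD\circ\Phi)^{-1}(k(x))$ is forced to be a shift of a $\mu_H$-stable vector bundle for every $x\in X$.

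The main obstacle is the simultaneous bookkeeping, ensuring a single composite $g$ of the elementary operations above realizes both sets of numerical conditions at once. Because $\Pic(X)=\ZZ H$, however, all constraints reduce to a handful of one-parameter inequalities in the rank and in the degree along $H$, and the orbit of $w$ under the group generated by $T_{\cO_X}$ and the line-bundle tensors is unbounded in these coordinates. One therefore proceeds in two steps: first use a (possibly shifted) $T_{\cO_X}$ to arrange the sign of $r'$, then tensor with a sufficiently positive power of $H$ to inflate $r'$, $D'\cdot H$ and $s'$ past every relevant lower bound. This routine numerical exercise produces the desired $g$ and completes the construction.
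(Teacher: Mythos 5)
Your overall strategy---conjugating $\Psi$ by compositions of $T_{\cO_X}$ and tensoring by powers of $H$, noting that these act trivially on $\CH_0(X)_{\hom}$ while moving the reflection center $w$ into Yoshioka's numerical range \eqref{eq:larged}---is exactly the paper's strategy, and your handling of the first and third bullets is fine. The gap is in the second bullet. You compute the Mukai vector of $(\DD\circ\Phi)^{-1}(k(x))$ and then claim that, once this vector is numerically favorable, the object ``is forced to be a shift of a $\mu_H$-stable vector bundle'' by Proposition~\ref{prop:aut-stable}. That proposition does not say this: it concerns the Fourier--Mukai transform attached to a \emph{universal family} on a fine moduli space of $\mu_H$-stable bundles, applied to objects already known to be $\mu_H$-stable bundles. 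Knowing only the Mukai vector of $(\DD\circ\Phi)^{-1}(k(x))$ gives no control over the object itself---$\Psi$ is an arbitrary anti-autoequivalence, so $(\DD\circ\Psi)^{-1}(k(x))$ could a priori be a complex with several cohomology sheaves, and adding summands of the form $E'[1]\oplus E'[-1]$ does not change a Mukai vector. No numerical condition on a class in $\widetilde{\rH}(X,\ZZ)$ can force every object realizing it to be a shift of a vector bundle.

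The paper closes this gap with an extra replacement step that your proposal is missing. Writing $(\DD\circ\Phi)^{-1}=T_{\cO_X}\circ\Theta$ and computing $\bfv(\Theta(k(x)))$, one considers the moduli space $M_H$ of $H$-stable sheaves with that (isotropic) Mukai vector; its universal family furnishes a derived equivalence $\Theta'$ with $\Theta'^{\widetilde\rH}=\Theta^{\widetilde\rH}$ which, \emph{by construction}, sends every skyscraper $k(x)$ to an $H$-stable sheaf, so that $T_{\cO_X}\circ\Theta'(k(x))$ is a shift of a $\mu_H$-stable bundle by Yoshioka. One then replaces $\Phi$ by $\DD\circ\Theta'^{-1}\circ T_{\cO_X}^{-1}$, and Theorem~\ref{cohomtochow} guarantees that this substitution does not change the action on $\CH_0(X)_{\hom}$, so the first bullet survives. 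Without some such appeal to a universal-family equivalence (or an equivalent device), the second bullet cannot be deduced.
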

\begin{proof}
Let $H_n=-\otimes H^{\otimes n}$ be the derived equivalence sending $E$ to $E\otimes H^{\otimes n}.$
We know that both ${T}_{\cO_X}$ and $H_n$ act as identity on $\CH_0(X)_{\hom}$. Let
    $$\Phi:=  {T}_{\cO_X}  \circ H_m\circ {T}_{\cO_X}  \circ H_n\circ \Psi  \circ H_{ -n}\circ {T}_{\cO_X}\circ H_{ -m}\circ {T}_{\cO_X}.$$ 
By a direct computation, we have the following commutative diagram
    \begin{equation}
        \begin{tikzcd}
\widetilde \rH(X) \arrow[rrr, "{T}_{\cO_X}  \circ H_m\circ{T}_{\cO_X} \circ H_n"] \arrow[d, "s_w"] &  &  &\widetilde \rH(X) \arrow[d, "s_{w'}"] \\
\widetilde \rH(X) \arrow[rrr, "{T}_{\cO_X}  \circ H_m\circ{T}_{\cO_X} \circ H_n"]                  &  &  &\widetilde \rH(X)    ,           
\end{tikzcd}
    \end{equation}
    where $w'= {T}_{\cO_X}  \circ H_m\circ {T}_{\cO_X} \circ H_n(w)$.
Note that we may assume that $w'=(r,c,s)$ such that $r,c,s$ are all sufficiently large and $r\gg s.$
In particular, $M_H(w')$ consists of $\mu_H$-stable vector bundle (up to shift) by \eqref{eq:larged}.

Write $(\DD\circ \Phi)^{-1}=T_{\cO_X}\circ \Theta$, then we compute that $$\bfv(\Theta(k(x)))=-(rs+1,-cr,r^2).$$ 
Consider the moduli space of $H$-stable sheaves with Mukai vector $(rs+1,-cr,r^2)$. We obtain a derived equivalence $\Theta'$ inducing the same action on $\widetilde\rH(X,\ZZ)$ as $\Theta$ such that $\Theta'(k(x))$ is a $H$-stable sheaf.

Again, Yoshioka's result shows that $T_{\cO_X}\circ \Theta'(k(x))$ is a shift of a $\mu_H$-stable vector bundle and due to Theorem \ref{thm:aut-inj} the anti-equivalence $\DD\circ \Theta'^{-1}\circ T_{\cO_X}^{-1}$ satisfies the conditions in the statement.
\end{proof}
Then we can apply Proposition \ref{prop:aut-stable} and the same argument as Theorem \ref{thm:maingeo} implies $$\Phi^{\CH}=-\id,$$ when $\Pic(X)=\ZZ H$. By taking specializations,  one can get $\Phi^{\CH}=-\id$ for any $X$.

\section{Birational automorphism on Bridgeland moduli spaces}\label{sec:bloch-hk}

In this section, we review  Voisin's filtration on  Bridgeland moduli spaces and show that the action of birational automorphism on Voisin's filtration is determined on the first graded piece. Moreover, we classify the hyper-K\"ahler varieties with finite order birational automorphisms and show that most of them are Bridgeland moduli spaces. 

\subsection{Filtrations on zero cycles of Bridgeland moduli spaces}
 If $M$ is a moduli space of stable objects on a $K3$ surface $X$, Voisin's filtration on $\CH_0(M)$ is controlled by a canonical filtration on $X$. More precisely, let $\fro_X$ be the BV class on $X$.  O'Grady  \cite{OG13} introduced a set filtration $\bS_\bullet(X)$  on $\CH_0(X)$
$$ \bS_i(X) =\bigcup_{\deg([z])=i}\{ [z] + \ZZ \cdot [\fro_X] \},$$ where z runs through all effective zero-cycles $z$ of degree $i$. 
There are several constructions of the BV filtration on $\CH_0(M)$ (\cf.~\cite{LZ22}) and all of them are proved to be equivalent. In particular, we have 
\begin{theorem}\cite[Theorem 1.1]{LZ22}\label{thm:SYZ=BV}
   $\bS_i\CH_0(M)= \left<E \in M |~c_2(E)\in \bS_i(X)\right>$. 
\end{theorem}
Next, for a Hilbert scheme $X^{[n]}$, there are components
$$\CH_{0}(X^{[n]})_{i}=\left<(o,\ldots,o,x_1-o,\ldots ,x_{i}-o)~|~x_1,\ldots, x_i\in X\right>\subset \CH_0(X^{[n]}),$$
which gives the splitting of Voisin's filtration (\cf.~\cite[Theorem 2.5]{Voi12}) in the sense that
\begin{equation}
    \bS_{i}\CH_0(X^{[n]})=\bigoplus_{l=0}^i\CH_{0}(X^{[n]})_{l}.
\end{equation}
For moduli space of stable objects in $\rD^b(X)$, Shen-Yin-Zhao and Vial have shown:
\begin{theorem}\cite[Theorem 3.1]{vial22}\cite[\S 2.2]{SYZ20} \label{bmotive}
Let $M$ be a moduli space of stable objects in $\rD^b(X)$ of dimension $2n$. Then there is a component $R_0$ of the incidence variety $$R= \{(E,\xi)~|~c_2(E)=[\mathrm{supp}(\xi)]+c\cdot \fro_X\in \CH_0(X) \}\subset M\times X^{[n]}$$ dominants both factors and it induces
an isomorphism of the birational motives of $M$ on $X^{[n]}$  as co-algebra objects.
%a component of the incidence variety $$R= \{(E,\xi)~|~c_2(E)=[\mathrm{supp}(\xi)]+c\cdot \fro_X\in \CH_0(X) \}\subset M\times X^{[n]}$$ dominants both factors. Moreover, it induces an isomorphism of the birational motive of $M$ on $X^{[n]}$  as co-algebra objects.
\end{theorem} 
In particular, Voisin's filtration on $\CH_0(M)$ also splits $$\bS_i\CH_0(M)=\bigoplus_{l=0}^i\CH_{0}(M)_{l},$$ which is determined by the isomorphism induced by $R_0$.  The first piece is spanned by $[E_0]$ with $c_2(E_0)=k\fro_X$. 

If $\phi:M\dashrightarrow M$ is a birational automorphism, then $\phi_*$ preserves the BV filtration, (\cf.~\cite[Corollary 3.3]{LZ22}). Therefore, $\phi$ induces $$\phi^\CH_i:\CH_{0}(M)_{i}\rightarrow \CH_{0}(M)_{i}.$$

   % If $\srX\to X$ is a twisted K3 surface and $M=M_\sigma(\srX,v)$,  one may extend O'Grady's filtration on $\CH_0(X)_\QQ$ and similarly define a filtration 
   % $$\rS^{\rm SYZ}_\bullet\CH_0(M):=\left<E \in M |~c_2(E)\in \rS_i(X)\right>.$$ However, it is unknown whether Theorem \ref{thm:SYZ=BV} still holds.  Even the termination $\rS^{\rm SYZ}_{n}\CH_0(M)=\CH_0(M)$ is not known yet.

%\begin{conjecture}\label{conj:3'}Let $Y$ be a $K3^{[n]}$-type hyper-K\"ahler variety. If  $\phi \in \Aut(Y)$ is  a symplectic automorphism. Then $$\phi_*:\CH_0(Y)\rightarrow \CH_0(Y)$$ is $\id$. Let  $M=M_\sigma(v)$ be a moduli space of stable objects in $\rD^b(Y)$. If  $\phi \in \Aut(M)$ is  an anti-symplectic automorphism. Then $$\phi^{\CH}_{2s} :\CH_0(M)_{0,2s}\to \CH_0(M)_{0,2s}$$ is $(-1)^s \id.$\end{conjecture}

\subsection{A variant of Marian-Zhao} 
Marian and Zhao have demonstrated in \cite{MZ20} that two sheaves in $M$ are rationally equivalent if and only if their second Chern classes coincide in $\CH_0(X)$. They presented this result for untwisted moduli spaces, yet it is valid for twisted ones as well,  with a minor alteration. Here we give a variation of the Marian-Zhao Theorem, which will be used later.

\begin{theorem}[cf.~\cite{MZ20, SYZ20}]\label{thm:K3HK}
Let $\srX\to X$ be a twisted $K3$ surface. Let $M$ be a moduli space of stable  objects in $\rD^{(1)}(\srX)$. Then we have 
\begin{enumerate}
    \item[(i)]   For any $E,F\in M$,   $[E]=[F]$ in $\CH_0(M)$ if and only if $c_2(E)=c_2(F)$ in $\CH_0(X)$.
    \item [(ii)] If $\srX=X$ is trivial, then any derived (anti-)autoequivalence preserves the class $\fro_X$.
    There exists a unique class $[E_0]\in \CH_0(M)$ such that
    $\bS_0\CH_0(M)=\ZZ[E_0]$ and $c_2(E_0)=k\fro_X$ for some integer $k$.
    \item [(iii)]If $\srX=X$ is trivial, then for any $[E],[F]\in M$ such that
    $c_2(E)+c_2(F)=2c_2(E_0)$, we have $$[E]_{i}=(-1)^i[F]_{i},$$ where  $[E]_{i}$ is the projection of $[E]\in\CH_{0}(M)$ to $\CH_{0}(M)_{i}$.
\end{enumerate}
\end{theorem}
\begin{proof}
The first statement  is the main theorem of \cite{MZ20}. 

For  (ii), the class $\fro_X$ is preserved by derived autoequivalences, as indicated by \cite[Corollary 2.6]{HMS09} and \cite[Corollary 7]{Voi15}, and $\DD$ acts as the identity on $\CH_0(X)$. The existence of $E_0$ is a consequence of \cite[\S 2.2]{SYZ20}, while its uniqueness is attributed to (i).

For (iii),  note that by Theorem \ref{bmotive}, we may assume that $M\cong X^{[n]}$. 
     Let $[x_1,x_2,\ldots,x_n]$ and $[y_1,y_2,\ldots,y_n]$ be two points in $X^{[n]}$.
Note that $$[x_1,x_2,\ldots,x_n]_{i}=\sum_{1\leq l_1\ldots\leq l_i\leq n} [x_{l_1}-\fro_X,\ldots, x_{l_i}-\fro_X,\fro_X,\ldots, \fro_X].$$
 We have to show that if $$\sum_{l=1}^n(x_l+y_l)=4n\fro_X,$$ then for any $1\leq i \leq n$, we have
    $$\sum_{1\leq l_1\ldots\leq l_i\leq n} [x_{l_1}-\fro_X,\ldots, x_{l_i}-\fro_X,\fro_X,\ldots, \fro_X]=(-1)^i\sum_{1\leq l_1\ldots\leq l_i\leq n} [y_{l_1}-\fro_X,\ldots, y_{l_i}-\fro_X,\fro_X,\ldots, \fro_X].$$ 
 Recall that X is covered by (singular) elliptic curves and contains an ample (nodal) rational curve. By employing the Riemann-Roch formula, we can find $y'\in X$ such that $$x+y'=2\fro_X,$$ for any $x\in X$ (\cf.~ \cite[Claim 0.2]{OG13} \cite[Lemma 2.6]{Voi15}).  Then for each $x_l$, there exists $y_l'\in X$ such that $x_l+y_l'=2\fro_X$ and by (1), we obtain
 $$[y_1,y_2,\ldots,y_n]\sim [y_1',y_2',\ldots,y_n'].$$ Therefore their projections to $\CH_{0}(X^{[n]})_{2i}$ are also equal. We obtain
 \begin{equation*}
     \begin{split}
        \sum_{1\leq l_1\ldots\leq l_i\leq n} [y_{l_1}-\fro_X,\ldots, y_{l_i}-\fro_X,\fro_X,\ldots, \fro_X]&=\sum_{1\leq l_1\ldots\leq l_i\leq n} [y_{l_1}'-\fro_X,\ldots, y'_{l_i}-\fro_X,\fro_X,\ldots, \fro_X]\\
         &= \sum_{1\leq l_1\ldots\leq l_i\leq n} [\fro_X-x_{l_1},\ldots, \fro_X-x_{l_i},\fro_X,\ldots, \fro_X]\\
         &=(-1)^i\sum_{1\leq l_1\ldots\leq l_i\leq n} [x_{l_1}-\fro_X,\ldots, x_{l_i}-\fro_X,\fro_X,\ldots, \fro_X],
     \end{split}
 \end{equation*}
and the assertion follows.
\end{proof}

\begin{remark}[Obstruction for twisted moduli space]
  It is an open problem whether a twisted derived equivalence preserves the BV class.  
  Recently, there are some progress in \cite{lzz24}.
\end{remark}

\subsection{Geometric examples}
Let us give some well-known examples of anti-symplectic involutions and give a geometric description of their actions on the Chow group. % We shall mention that different from the case of K3 surfaces, the quotient  $Y/\left<\iota\right>$ may have a quite large Chow group when $n>1$.  
\subsubsection{Beauville involutions}
Let $(X,H)$ be a polarized $K3$ surface of degree $H^2=2g-2$. Then we have $$X\hookrightarrow \PP^g.$$ Intersecting $X$ with $\PP^{g-2}\subset \PP^g$ gives an involution:
$$\iota:X^{[g-1]}\dashrightarrow X^{[g-1]}.$$ It acts on cohomology is the reflection around $H-E$ by \cite{o2005involutions}. Then $\iota$ induces a natural action $\iota_\ast:\CH_0(X)_{\hom}\to \CH_0(X)_{\hom}$ via the natural projection $\CH_0(X^{[g-1]})_{hom}\rightarrow \CH_0(X)_{\hom}.$

Since $\PP^{g-2}$ in $\PP^g$ are parameterized by the Grassmannian $\mathrm{Gr}(g-1,g+1),$  this immediately yields that
the map $\iota_\ast:\CH_0(X)_{\hom}\to \CH_0(X)_{\hom}$ is $-\mathrm{id}$. Then by Theorem \ref{thm:K3HK}(iii), we obtain
\begin{proposition}
    The induced action of $\iota$ on $\CH_0(X^{[g-1]})_{i}$ is $(-1)^i$.
\end{proposition}

We remark that this is also a special case of Theorem \ref{thm:K3HK}.

\subsubsection{O'Grady involution}
Let us consider hyper-K\"ahler varieties of $K3^{[n]}$-type  with an anti-symplectic involution  which are not Bridgeland moduli spaces on a twisted $K3$ surface. A typical example is a general polarized $K3^{[2]}$-type hyper-K\"ahler fourfold $(Y,H)$ of degree $2$, known as a double EPW sextic. It admits a morphism $$\phi_H: Y\rightarrow |H|^\vee=\PP^5,$$ which is a double cover ramified over a sextic hypersurface.
Additionally, there exists a natural anti-symplectic involution $\iota: Y\rightarrow Y$. Let $Z$ denote the fixed locus, which, according to \cite[Theorem 1.1]{Z23}, is a constant cycle surface.

Let $\mathfrak{o}_{Y}$ be the class of a point on $Z$. There is  an $(\pm 1)$-eigenspace decomposition
$$\CH_0(Y)\cong \ZZ \fro_Y \oplus\CH_0(Y)_{\hom}^-\oplus\CH_0( Y)_{\hom}^+.$$ In this case, Bloch's conjecture for $\iota$ is equivalent to the identity
\begin{equation}\label{eq:Bloch-EPW}
    \bS_1\CH_0(Y)=\ZZ \fro_Y \oplus\CH_0(Y)_{\hom}^-.
\end{equation}
This is currently unknown but $\ZZ \fro_Y \oplus\CH_0(Y)_{\hom}^-\subset \bS_1\CH_0(Y)$ is hidden in \cite[Proposition 5.4, Proposition 5.8]{Z23}.

\subsection{Characterization of Bridgeland moduli spaces}\label{subsection:c-Bridgeland}
For a $K3^{[n]}$-type hyper-K\"ahler variety $Y$,  let $(-,-)_Y$ be the Beauville-Bogomolov form on $\rH^2(Y,\ZZ)$. In \cite[Section 9]{Mark11}, Markman has found an extension $\Lambda(Y)$ of the lattice $\rH^2(Y,\ZZ)$ (see also \cite[Theorem 3]{add16}). We call $\Lambda(Y)$  the Markman-Mukai lattice of $Y$. 
\begin{theorem}\label{thm:HK-K3}The lattice
$\Lambda(Y)$ is isomorphic to the Mukai lattice of a $K3$ surface and the orthogonal complement $$\rH^2(Y,\ZZ)^\perp\subset \Lambda(Y)$$is generated by a primitive vector with square $2n-2$.
 If $Y=M_\sigma(\srX, v)$ is a moduli space of stable objects, there is a natural Hodge isometry between  $\rH^2(Y,\ZZ)$ and  $v^\perp\subset \widetilde\rH(\srX,\ZZ)$.
\end{theorem}

There is a natural weight two Hodge structure on $\Lambda(Y)$ and we let  $$\Lambda_{\rm alg}(Y):=\Lambda(Y)\cap \Lambda^{1,1}(Y).$$

We say $Y$ is a {\it Bridgeland moduli space} on a twisted K3 surface $\srX\to X$ if it is isomorphic to the coarse moduli space of stable objects  in $\rD^{(1)}(\srX)$. There is  a numerical characterization of Bridgeland moduli spaces for hyper-K\"ahler varieties of $K3^{[n]}$-type as below. 
\begin{theorem}[Addington-Huybrechts-Markman]\cite[Proposition 4]{add16}\cite[Lemma 2.6]{Huy17}\label{Thm:tmoduli}
    Let $Y$ be a $K3^{[n]}$-type hyper-K\"ahler variety with $n\geq 2$, then $Y$ is isomorphic to a coarse moduli space of stable objects on a (twisted) $K3$ surface if and only if $\Lambda_{\rm alg}(Y)$ contains a copy of $\rU$ (resp.~$\rU(k)$).
\end{theorem}
 Then we have
\begin{corollary}\label{cor:LF}
If either $Y$ has Picard number $g\geq4$ or admits a birational Lagrangian fibration, then $Y$ is a Bridgeland  moduli space. Moreover, if $\rank~\NS(Y)\geq 13$, it is a Bridgeland moduli space on some untwisted K3 surface $X$. 
\end{corollary}
\begin{proof}
For the first assertion,    the hypothesis on $Y$ ensures that   there exists an isotropic class   $L\in  \Lambda_{\alg}(Y)$. Then we can find  a class $H\in \Lambda_{\rm alg}(Y)$ such that $H\cdot L=m\neq 0$.  Then $\Lambda_{\rm alg}(Y)$  contains  the sublattice spanned by   $mH-\frac{(H^2)}{2}L$  and $L$ which is isomorphic to $\rU(m^2)$. The assertion follows directly from Theorem \ref{Thm:tmoduli}. 

For the second assertion,  if $\Sigma$ is an even lattice, we denote by $\ell(\Sigma)$ the minimal number of generators of its discriminant group  $\Sigma^\vee/\Sigma$.   Since $\Lambda_\alg(Y)$ is a primitive sublattice of the unimodular lattice $\Lambda(Y)$ of rank $24$,  we have $$\ell(\Lambda_\alg(Y))=\ell(\Lambda_{\alg}(Y)^\perp)\le 24-14=10.$$
It satisfies $\rank(\Lambda_{\alg}(Y))> 3+\ell(\Lambda_{\alg}(Y)) $. 
 According to  \cite[Corollary 1.13.5]{Ni79}, $\Lambda_\alg(Y)$ must contain  a hyperbolic plane $\rU$. By Theorem \ref{Thm:tmoduli},  this means  that $Y$ is isomorphic  to a moduli space of sheaves on a untwisted $K3$ surface.
\end{proof}

\subsection{Birational automorphism of finite order on HK of $K3^{[n]}$-type} 
Let $Y$ be a hyper-K\"ahler variety of $K3^{[n]}$-type. In \cite[Theorem 1.1]{dutta2022symplectic}, it has been showed that $Y$ is a Bridgeland moduli space if $Y$ admits a symplectic birational automorphisms of finite order.  Here we give a  substantial strengthening of this result, which classifies  birational automorphisms  of finite order on hyper-K\"ahler varieties of $K3^{[n]}$-type. 

We start by recalling a standard result on the action of birational automorphisms.
\begin{lemma}\cite[Lemma 4.10]{markman2008monodromy}\label{lemma:disc}
For $\phi\in \Bir(Y)$, the induced action $\phi^*:\rH^2(Y,\ZZ)\to \rH^2(Y,\ZZ)$  acts as $\pm\id$ on the discriminant lattice $A_Y=\rH^2(Y,\ZZ)^\vee/\rH^2(Y,\ZZ)$.  In particular, $\phi^\ast$ can be extended to a Hodge isometry in  $\rO(\Lambda(Y))$. 

\end{lemma}

Now we can define the invariant and coinvariant sublattices of $\Lambda(Y)$. 

\begin{definition}
Let $Y$ be a hyper-K\"ahler variety of $K3^{[n]}$-type and let $\phi\in \Bir(Y)$ be an element of finite order.  
\begin{itemize}
    \item let $\Lambda(Y)^{\phi}\subseteq \Lambda(Y)$  be the invariant lattice  under the action of $\phi^\ast:\Lambda(Y)\to \Lambda(Y)$. We define
    \begin{equation*}
        \Lambda(Y)_{\phi}=(\Lambda(Y)^{\phi})^\perp
    \end{equation*}
    to be the coinvariant lattice. 
\vspace{.1cm}
    \item  $\phi$ is  called {\it purely non-symplectic} if $\phi^k$ is non-symplectic for all positive integers $k<\mathrm{ord}(\phi)$.
\end{itemize}
\end{definition}

 The main result is 

\begin{theorem}\label{thm:finite-order}
With notations as above, then \begin{enumerate}
      \item $Y$ is isomorphic  to a moduli space of (twisted) sheaves on a $K3$ surface  except 
   \begin{itemize}
       \item  $1\leq \rank~\NS(Y)\leq 3$, $\phi$ is an anti-symplectic involution and  $\Lambda(Y)^{\phi}$ is positive definite. 
       \item  $\rank~\NS(Y)=1$ or $3$, $\phi$ is purely non-symplectic with ${\rm ord}(\phi)\in\{3,4,6\}$ and $\Lambda(Y)^{\phi}$ is positive definite. 
       \item  $\rank~\NS(Y)=1$ and  $\phi$ is purely non-symplectic with ${\rm ord}(\phi)\in\{23, 46\}$.
   \end{itemize}
(See Remark \ref{rmk:classification} for further classification of exceptional cases). 
\vspace{.1cm}

\item $Y$ is isomorphic to a moduli space of sheaves on a $K3$ surface with $\rank ~\NS(Y)\ge 13$ if  $\phi$ is (anti)-symplectic with  ${\rm ord}(\phi)\neq 2, 4$.

  \end{enumerate}

\end{theorem}
\begin{proof}  For (1), we  claim that $Y$ is a Bridgeland (twisted) moduli space  if  one of the following three conditions holds
   \begin{enumerate}
       \item  [i)] $\phi$ is symplectic;
       \item [ii)] $\mathrm{ord}(\phi)>2$ and $\rank~\NS(Y)\neq 1,3$;
       \item [iii)] $\mathrm{ord}(\phi)$ is divisible by one of the following numbers: $8$, $9$, $12$, an odd prime $p\neq 3,23$. 
   \end{enumerate}
By Corollary \ref{cor:LF}, it suffices to show $\Lambda_{\alg}(Y)$ contains an isotropic vector.  
   
In the first case, the assertion is
\cite[Theorem 1.1]{dutta2022symplectic}.

In the second case, by \cite[Lemma 4.1]{Og02},  we know that if $\rank(\Lambda_{\alg}(Y))=\rank(\NS(Y))+1$ is odd, then $\phi$ is either symplectic or anti-symplectic (See also \cite[Corollary 3.3.5]{Huy16}).  Then $\phi^2$ is a non-trivial symplectic birational automorphism. The assertion then follows from i).   When $\rank (\NS(Y))\geq 4$, there always exist isotropic classes in $\Lambda_\alg(Y)$.  

In the last case,  we may assume $\phi$ is purely non-symplectic with ${\rm ord}(\phi)=$ $8$, $9$, $12$, or an odd prime $p\neq 3, 23$ and $\rank (\NS(Y))=1$ or $3$.  Here we only treat the case ${\rm ord}(\phi)=p$ is a prime not equal to $3$ and $23$. The discussion for the rest of cases is similar. Set $$\rT(Y)=\Lambda_{\alg}(Y)^\perp\subseteq \Lambda(Y),$$ to be the transcendental lattice, then $\rank(\rT(Y))=23-\rank(\NS(Y)) \leq 22$. As $\phi$ is purely non-symplectic,   one must have  $p-1 \mid \rank(\rT(Y))=20$ or $22$. This implies   $p=5$ or $11$ and $\rank (\rT(Y))=20$. 
In this situation, $\rank (\Lambda_{\alg}(Y))=4$ and  $\Lambda_{\alg}(Y)^{\phi}=\Lambda_{\alg}(Y)$.  The invariant sublattice  $\Lambda(Y)^{\phi}\subseteq \Lambda_{\alg}(Y)$ is a $p$-elementary lattice by \cite[Theorem 6.1]{Kon92}.  By the classification of  $p$-elementary even lattices of signature $(2,2)$ (\cf.~ \cite[Chapter 15, Theorem 13]{CS99}), $\Lambda_{\alg}(Y)$ contains either $\rU$ or $\rU(p)$.  Note that our argument already implies that if  $ {\rm ord}(\phi)=23$ and $\phi$ is not symplectic,  then $\rank(\NS(Y))=1$.

To finish the proof, we only need to treat the case $\phi$ is purely non-symplectic with ${\rm ord}(\phi)\in\{2,3,4,6\}$ and $ \rank (\NS(Y)\cap \Lambda(Y)^{\phi})\geq 2$. By a similar argument in case iii), for cases ${\rm ord}(\phi)=2,4$ (resp. ${\rm ord}(\phi)=3, 6$), we are reduced to show that all $2$-elementary (resp. $3$-elementary) indefinite even lattices of rank at most $4$ are isotropic. This is a consequence of classification of $2$ and $3$-elementary lattices in \cite[Chapter 15, Theorem 13]{CS99}.
\vspace{.1cm}

For (2),  suppose $\phi$ is (anti)-symplectic of  ${\rm ord}(\phi)\neq 2,4$. Then $\phi^2$ is symplectic of order at least $3$ and the induced action on $A_Y=\rH^2(Y,\ZZ)^\vee/\rH^2(Y,\ZZ)$ is $\id$.  As shown in the proof of \cite[Theorem 1.1]{dutta2022symplectic},
the coinvariant lattice $\Lambda(Y)_{\phi^2}$
is a negative definite lattice contained in $\NS(Y)$ and  there is an isometry $f\in {\rm O}(N_{24})$ of the rank $24$ Leech lattice $N_{24}$ such that $\Lambda(Y)_{\phi^2}\cong (N_{24})_f:=(N_{24}^f)^\perp$ and ${\rm ord}(f)={\rm ord}(\phi^2)\ge 3$. Then by \cite[Table 1]{HM16}, we have $$\rank (\Lambda(Y)_{\phi^2})=\rank ((N_{24})_f)\ge 12,$$ which implies $\rank (\NS(Y))\ge\rank (\Lambda(Y)_{\phi^2})+1\ge 13$. Therefore, by Corollary \ref{cor:LF}, $Y$ is a Bridgeland moduli space on some untwisted K3 surface.

%Consider the action of $\varphi^\ast$ on  the unimodular lattice $\Lambda(Y)$, its invariant sublattice  and co-invariant sublattice are both $2$-elementary

%Consider the action of $\varphi^\ast$ on  the unimodular lattice $\Lambda(Y)$, its invariant sublattice  and co-invariant sublattice are both $p$-elementary (\cf~). 

%$\Lambda_{\alg}(Y)$ is isometric to one of the following lattices: $\rU\oplus\rU$, $\rU\oplus H_5$, $\rU\oplus \rU(5)$, $\rU(5)\oplus H_5$, $\rU(5)\oplus \rU(5)$, $\rU\oplus \rU(11)$, $\rU(11)\oplus \rU(11)$. Here $H_5={\left(\begin{array}{cc} 2 & 1 \\1 & -2 \end{array} \right)}.$ 
%By Corollary \ref{cor:LF}, it suffices to find an isotropic class in $\Lambda_{\rm alg}(Y)$ when $Y$ admits a symplectic birational self map of finite order. Let $\Lambda^{\varphi^*}\subset \Lambda$ be the invariant lattice and $$S_{\varphi^*}(\Lambda)=(\Lambda^{\varphi^*})^\perp\subset \Lambda.$$ Since $\varphi$ is symplectic, $\Lambda^{\varphi^*}\subset \Lambda_{\rm alg}.$ The proof of \cite[Theorem A]{dutta2022symplectic} has shown that the sublattice of $\Lambda_{\rm alg}$ generated by $S_{\varphi^*}(\Lambda)$ and $\rH^2(Y,\ZZ)^\perp$ always contains an isotropic class.

\end{proof}

\begin{remark}\label{rmk:classification}
For purely non-symplectic $\phi$ with ${\rm ord}(\phi)\in\{3,4,6,23,46\}$ and $\rank (\NS(Y))=1$, the Markman-Mukai lattice  $\Lambda_{\alg}(Y)$ is given as follows:
  \begin{enumerate}
       \item  [1)] ${\rm ord}(\phi)=3$, $\Lambda_{\alg}(Y)\cong {\left(\begin{array}{cc} 
2 & 1 \\
1 & 2 
\end{array} \right)}$;
       \item [2)] ${\rm ord}(\phi)=4$, $\Lambda_{\alg}(Y)\cong {\left(\begin{array}{cc} 
2 & 0 \\
0 & 2 
\end{array} \right)}$;

  \item  [3)] ${\rm ord}(\phi)= 6$, $\Lambda_{\alg}(Y)\cong {\left(\begin{array}{cc} 
2 & 1 \\
1 & 2 
\end{array} \right)}$, $\NS(Y)\cong\langle 2\rangle$ and $n=4$;

       \item [4)] ${\rm ord}(\phi)=23$, $\Lambda_{\alg}(Y)\cong {\left(\begin{array}{cc} 
2 & 1 \\
1 & 12 
\end{array} \right)}$, ${\left(\begin{array}{cc} 
4 & 1 \\
1 & 6 
\end{array} \right)}$;
\item [5)] ${\rm ord}(\phi)=46$, $\Lambda_{\alg}(Y)\cong {\left(\begin{array}{cc} 
2 & 1 \\
1 & 12 
\end{array} \right)}$, $\NS(Y)\cong\langle 2\rangle$ and $n=24$. 
   \end{enumerate}
 For cases 1), 2) and 4), the pair $(\NS(Y), n)$ have infinitely many possibilities satisfying certain numerical conditions.   We also refer the readers to  \cite{CC20} and \cite{CCC21} for some classification of the invariant sublattice of $\NS(Y)$ when  $n$ is small.       
\end{remark}

\section{Proof of Theorem \ref{thm:blochHK} and generalizations}\label{sec:proof}

In this section, we use Theorem \ref{thm1} to deduce Theorem \ref{thm:blochHK}. The main idea is to show that (anti)-symplectic birational automorphisms on Bridgeland moduli spaces  are induced from autoequivalences. 
\subsection{Lift Hodge isometries to autoequivalences} 

In this section, we mainly deal with the case that $Y$ is a Bridgeland moduli space on some twisted $K3$ surface, where $\Lambda(Y)\cong \widetilde{\rH}(\srX,\ZZ)$.  
 We will need  the result below to lift Hodge isometries. 
\begin{lemma}\label{lemma:anti-lift}
    Let $\srX\to X$ and $\srX'\to X'$ be two twisted $K3$ surfaces.
    If there is an orientation-reversing Hodge isometry between 
    $$g:\widetilde\rH(\srX,\ZZ)\cong \widetilde\rH(\srX',\ZZ),$$
    Then there exists a derived anti-equivalence $$\Phi:\rD^{(1)}(\srX)\to \rD^{(1)}(\srX')^{\rm op},$$ such that $\Phi^{\widetilde\rH}=g$.
\end{lemma}
\begin{proof}
     Let $\srX''\to X$ be a twisted $K3$ surface with $[\srX'']=-[\srX']$. Then $$(-)^\vee:\widetilde\rH(\srX',\ZZ)\cong \widetilde\rH(\srX'',\ZZ)$$
     is an orientation-reversing Hodge isometry. Therefore $(-)^\vee\circ g$ can be lifted to a derived equivalence $\Psi:\rD^{(1)}(\srX)\to \rD^{(1)}(\srX'')$ according to \cite[Theorem 0.1]{Huy06}. Then $\Phi=\DD\circ \Psi$ is the desired anti-equivalence, where $\DD:\rD^{(1)}(\srX'')\to \rD^{(-1)}(\srX'')^{\rm op}\cong \rD^{(1)}(\srX')^{\rm op}$ is the derived dual functor we used  before.
\end{proof}
 
 \subsection{Lifting automorphisms}
 
 Let $\srX\to X$ be a  twisted $K3$ surface and $v\in \widetilde{\rH}(\srX,\ZZ)$ a primitive vector. Assume that $v^2\geq 0$ and $\sigma\in \mathrm{Stab}^\dagger(\srX)$ is $v$-generic. Let $M=M_\sigma(\srX, v)$ be the coarse moduli space of $\sigma$-stable objects in $\rD^{(1)}(\srX)$.  Then we obtain the following, which is essentially a consequence of \cite{bayer2014mmp}.
\begin{proposition}\label{mortoder}Assume $\srX=X$ is untwisted. 
For any $\phi\in \Aut(M)$, there exists an (anti-)autoequivalence
$\Phi:\rD^b(X)\rightarrow \rD^b(X)$  (resp.~$\rD^b(X)\rightarrow\rD^b(X)^{\rm op}$) and a $v$-generic stability condition $\tau$ such that  
$\Phi$ induces $\phi$.
\end{proposition}
\begin{proof}
     When $v^2=0$, the statement is trivial. We assume that $v^2\geq 2$ and identify $\rH^2(M,\ZZ)$ with $v^\perp\subset \widetilde\rH(X,\ZZ)$. The action $\phi^*$ on $\rH^2(M,\ZZ)$ can be lifted to an isometry on $\widetilde\rH(X,\ZZ)$ by Lemma \ref{lemma:disc}. It is orientation-preserving if the action on $A_M$ is the identity and orientation-reversing if the action on $A_M$ is $-\id$.
     
     Let $h\in {\rm Amp}(M)$ be an ample class.
     According to the description of ${\rm Amp}(M)$ in \cite[Theorem 12.1]{bayer2014mmp}, $\left<h.\alpha\right>\neq 0$ for any $\alpha\in \NS(M)$ with $\alpha^2\geq -2$.
     Then the ampleness implies that 
     $$\Omega_Z:=v-ih\in \cP_0^+(X).$$
     Assume that there exists $w\in \widetilde\rH(X,\ZZ)$ such that $\frac{\left<\Omega_Z,v\right>}{\left<\Omega_Z,w\right>}\in \RR$.  Then either $w^2\leq -4$ or $w=kv$. This implies that any stability condition in $\cZ^{-1}(\Omega_Z)$ is $v$-generic.

Now we select $h_0\in{\rm Amp}(M)$ and let $h_1:=\phi^*(h_0)$, which remains an ample class. Define
$$\Omega_{Z_t}:=v-i((1-t)h_0+th_1)~{\rm for}~t\in \left[0,1\right].$$ Since $(1-t)h_0+th_1\in {\rm Amp}(M)$, the stability conditions in $\cZ^{-1}(\Omega_{Z_t})$ are all $v$-generic. Hence, the line segment $\left[\Omega_{Z_0},\Omega_{Z_1}\right]\subset \cP_0^+(X)$ can be lifted to a path $\gamma:\left[0,1\right]\to {\rm Stab}^\dagger(X)$ that lies entirely within a chamber.

Set $\gamma(0)=\tau.$ 
 Recall that there is a natural map 
     $$\begin{tikzcd}
{\rm Stab}^\dagger(X) \arrow[r, "\ell"] & \NS(M)
\end{tikzcd},$$
as defined in \cite{bayer2014mmp}, such that for any generic $\sigma'\in {\rm Stab}^\dagger(X)$, $\ell(\sigma')$ is given by the composition:
$${\rm Stab}^\dagger(X)\xrightarrow{\cZ}\widetilde\rH(X,\ZZ)_\CC\xrightarrow{I}v^\perp\xrightarrow{\theta_{\sigma,v}}{\rm Pos}(M)\xrightarrow{W}{\rm Mov(M)},$$ where $I(\Omega_{Z})={\rm Im~}\frac{-\Omega_{Z}}{(\Omega_{Z},v)}$ and $W$ is the identity on the ample cone ${\rm Amp}(M)$ (\cf.~\cite[Theorem 10.2]{bayer2014mmp}). According to \cite[Theorem 1.2]{bayer2014mmp}, for any $v$-generic stability condition $\sigma'\in {\rm Stab}^\dagger(X)$, the coarse moduli space $M_{\sigma'}(\srX, v)$ is the birational model of $M$ corresponding to $\ell(\sigma').$ It can be verified that $\ell(\tau)$ is proportional to $h$ and there is a canonical identification $M_\tau(\srX, v)\cong M.$

Finally, since $\phi^*(\Omega_{Z_0})=\Omega_{Z_1}$, we can choose a derived equivalence $\Phi$ so that $\Phi^{\widetilde\rH}=\phi^*$ and $\Phi(\tau)=\gamma(1)$ by Theorem \ref{Thm:lift-der}, Lemma \ref{lemma:anti-lift} and Theorem \ref{deck}. The isomorphism $\varphi$ is induced by $\Phi$, hence $\varphi^*=\Phi^{\widetilde\rH}=\phi^*$.

\end{proof}
\begin{remark}
From the proof, one can see that  if the second statement of Theorem \ref{deck} holds for twisted $K3$ surfaces,  Proposition \ref{mortoder} holds for twisted $K3$ surfaces  as well. 
\end{remark}

\subsection{Lifting birational automorphisms} 

For birational automorphisms, we work on general Bridgeland moduli spaces and  obtain a slightly weaker result. 
\begin{proposition}\label{prop:bir-de}
 Let $M=M_\sigma(v)$ be a Bridgeland moduli space on a twisted K3 surface $\srX\to X$.    For any $\phi \in \mathrm{Bir}(M)$, there exists a derived (anti-)automorphism $$\Phi:\rD^{(1)}(\srX)\rightarrow \rD^{(1)}(\srX) ~(\hbox
{resp. ~} \rD^{(1)}(\srX)\rightarrow\rD^{(-1)}(\srX)^{\rm op}),$$ 
    such that $\phi^{\widetilde\rH}=\Phi^{\widetilde\rH}$ (resp. $\phi^{\widetilde\rH}=-\Phi^{\widetilde\rH}$) and there is an open subset $U\subset M$ and $\phi([E])=[\Phi(E)]$ for any $[E]\in U$.
\end{proposition}
\begin{proof}
As shown before, we can lift the action $\phi^*:\rH^2(M,\ZZ)\to \rH^2(M,\ZZ)$ to an autoequivalence $\Psi:\rD^{(1)}(\srX)\rightarrow \rD^{(1)}(\srX)$ if the action on $A_Y$ is $\id$.   If its action  on $A_Y$ is $-\id$,  we can lift $\phi^\ast$ to an anti-equivalence $\Psi:\rD^{(1)}(\srX)\rightarrow \rD^{(-1)}(\srX)^{\rm op}$ by Lemma \ref{lemma:anti-lift}. 
    
    The equivalence $\Psi$ (resp. $\Psi[1]$) induces an isomorphism $M\rightarrow M_{\Psi(\sigma)}(v)$. Due to the wall crossing  result \cite[Theorem 1.1]{bayer2014mmp}, there exists a derived (anti-)autoequivalence $\Psi'$ and a birational map $$ M_{\Psi(\sigma)}(v)\dashrightarrow M,~E\rightarrow \Psi'(E)$$ defined over an open subset $U\subset M_{\Psi(\sigma)}(v).$
    Composing it  with $M\cong M_{\Psi(\sigma)}(v)$, we obtain another birational map
    $$\varphi:M\cong M_{\Psi(\sigma)}(v)\dashrightarrow M,$$ which sends $[E]$ to $[\Phi(E)]$ for $E$ lying in some open subset of $M$, where $\Phi=\Psi'\circ \Psi$ (resp. $\Phi=\Psi'\circ \Psi[1]$).
    By \cite[Lemma 10.1]{bayer2014mmp}, the actions of $\phi^*$ and $\varphi^*$ on $\rH^2(M,\ZZ)$ differ by products of prime exceptional reflections. Then according to \cite[Theorem 6.18(5)]{markman2008monodromy}, $\phi^*$ and $\varphi^*$ must coincide. Hence $\varphi=\phi$ and $\phi^{\widetilde\rH}=\Phi^{\widetilde\rH}$ (resp. $\phi^{\widetilde\rH}=-\Phi^{\widetilde\rH}$).
 \end{proof}

\subsection{Bloch's conjecture for birational automorphisms} 
\begin{theorem}\label{thm:bloch-HK-strong}
If $Y$ is of $K3^{[n]}$-type,  then Conjecture \ref{conj3} holds  if one of the following conditions holds
\begin{enumerate}
\item [(1)]  $Y$ is a moduli space of stable objects on some $K3$ surface $X$ and $\rank(\NS(Y))\geq 4$.
\item [(2)]  $\phi$  is symplectic and $\phi^\ast:\Lambda_{\alg}(Y)\to \Lambda_{\alg}(Y)$ maps some isotropic vector $v$ to $\pm v$.  
\end{enumerate}
\end{theorem}
\begin{proof}
    According to Corollary \ref{cor:LF}, in both cases, $Y$ is a Bridgeland moduli space on some twisted K3 surface  $\srX\to X$ and  we have an identification $\Lambda(Y)\cong \widetilde{\rH}(\srX)$.   By Proposition \ref{prop:bir-de}, $\phi$ is induced by an (anti-)autoequivalence $\Phi:\rD^{(1)}(\srX)\to \rD^{(1)}(\srX)$ (resp. $\rD^{(1)}(\srX)\rightarrow\rD^{(-1)}(\srX)^{\rm op}$) and the 
    birational map $\phi$ is induced by $\Phi.$ 
   % Let $E$ on $\srX$ be a general point on $M$ and it suffice to prove the result for general points in $M$ by the moving lemma.

    In both cases, if $\phi$ is symplectic, we have $\Phi^\CH=\id$
    by Theorem \ref{thm:thm2} and Theorem \ref{thm:lagk3}. Moreover, there is a fixed point $[E_0]\in Y$.
    This implies that $$c_2(E)-c_2(E_0)=c_2(\Phi(E))-c_2(\Phi(E_0))$$ for general $[E]\in Y$. 
    By Theorem \ref{thm:K3HK}(i), we have $$ \phi([E])=[\Phi(E)]=[E]$$
    in $\CH_0(Y)$. This proves $\phi$ acts as identity on $\CH_0(Y)$. 

   For (1), the remaining case is when $\phi$ is anti-symplectic and $\srX=X$ is untwisted. By Theorem \ref{thm:thm2}, we have $\Phi^\CH=-\id$. Take a point $[E_0]\in Y$ such that $c_2(E_0)=k\fro_X$ as in Theorem \ref{thm:K3HK}, then $\Phi^\CH=-\id$ implies that $$c_2(E)+c_2(\Phi(E))=2c_2(E_0).$$ By Theorem \ref{thm:K3HK}(iii), we get 
   \begin{equation*}
       \phi([E])_i=(-1)^i[E]_i
   \end{equation*}
which means that $\phi$ acts as $(-1)^i \id $ on $\CH_0(Y)_i$.

\end{proof}

%As above, we assume $\phi$ is induced by an (anti-)autoequivalence $\Phi$.    According to Theorem \ref{thm:K3HK} (i)(iii), it suffices to show $\Phi^{\CH}=\pm\id$.  By our assumption, there exists an isotropic vector $w\in  v^\perp\subseteq \widetilde{\NS}(\srX)$ which is fixed by  $\Phi^{\widetilde{\rH}}$. It follows from  Theorem \ref{lagk3} that  $\Phi^{\CH}=\id$ if $\phi$ is symplectic and $\Phi^{\CH}=-\id$ if $\phi$ is anti-symplectic.

%\begin{theorem}\label{thm:bloch-LF}If $\phi\in \Bir(M_\sigma (\srX,v))$ preserves a birational Lagrangian fibration, then Conjecture \ref{conj3} for $\phi$  holds if either $\phi$ is symplectic or $\srX\to X$ is untwisted. \end{theorem}\begin{proof}\end{proof}

%\begin{corollary}\label{cor:blochall} Let $Y$ be a $K3^{[n]}$-type hyper-K\"ahler variety. If the Markman-Mukai lattice $\Lambda_{\alg}(Y)$ contains a hyperbolic lattice and $\rank (\NS(Y) )\neq 2$ and $3$, then Conjecture \ref{conj3} holds. \end{corollary}\begin{proof}   This follows directly from Theorem \ref{thm2} and Theorem \ref{thm:HK-K3}. \end{proof}

\subsection{Proof of Theorem \ref{thm:bloch-finite}}

If $\mathrm{ord}(\phi)\neq 2,4$, this follows from Theorem \ref{thm:finite-order} (2) and Theorem \ref{thm:bloch-HK-strong} (1).

If $\phi$ is a symplectic involution,  as the $2$-elementary  coinvariant lattice $\Lambda(Y)_{\phi}\subseteq \Lambda_{\alg}(Y)$ is not isomorphic to $E_8(-2)$,   there are only two  possibilities:
\begin{itemize}
 \item $\Lambda(Y)_{\phi}$ is indefinite and the action of $\phi$ on $A_Y$ is $-\id$;   
 \item $\Lambda(Y)_{\phi}$ is negative definite and  $\rank ~\Lambda(Y)_{\phi}\geq 12$ .
\end{itemize}
(cf.~\cite[Lemma 2.4, 2.9]{dutta2022symplectic}).
In the first situation,  $\Lambda(Y)_\phi$ has an isotropic vector.  In the second situation, $Y$ is a Bridgeland moduli space on a untwisted $K3$ surface with Picard number $\geq 13$. The assertion follows from Theorem \ref{thm:bloch-HK-strong}.

\subsection{Proof of Theorem \ref{thm:lag-cons}}  It suffices to prove the loci $\mathrm{Fix}(\iota)$ is a constant cycle subvariety. 
For $n=1$, this is well-known. For $n=2$ and $\rank(\NS(Y)^{\iota})=1$, according to \cite[Theorem 3.11]{CCC21},  $Y$ is a double EPW sextic and the assertion follows from \cite[Theorem 1.1]{Z23}.

In all the remaining cases,  by Theorem \ref{thm:finite-order} (1), we know that $Y$ is a Bridgeland moduli space on some twisted $K3$ surface $\srX\to X$. 
According to  Proposition \ref{mortoder}, we can assume that there exists an (anti-)autoequivalence $\Phi:\rD^{(1)}(\srX)\rightarrow \rD^{(1)}(\srX)$ such that $\iota([E])=[\Phi(E)]$. 
  Since $\rank(\NS(Y)^{\iota})\geq 2$, the invariant lattice of $\Phi^{\widetilde{\NS}}$ on $\widetilde{\NS}(\srX)$ is isomorphic to $ \Lambda_\alg(Y)^{\iota}$ which is an indefinite $2$-elementary lattice of rank $>1$.   The lattice $\Lambda_\alg(Y)^{\iota}$ must contain an isotropic vector.  Thus we can get
$$\Phi^\CH=-\id,$$
 by applying Theorem \ref{thm:lagk3}.

     For any $[F_1]$ and $[F_2]$  lying in the fixed locus of $\iota$, we have  $$c_2(F_1)-c_2(F_2)=-(c_2(\Phi(F_1))-c_2(\Phi(F_2))=-(c_2(F_1)-c_2(F_2)).$$ This gives $c_2(F_1)=c_2(F_2)$ as $\CH_0(Y)$ is torsion free. Then the constantness follows from Theorem \ref{thm:K3HK}(i).
   % The dimension of the fixed loci is computed in \cite[Lemma 1]{Bea11} and \cite[Corollary 1.2]{Voi16} has shown that constantness implying Lagrangian.
    \qed

\section{Integral Cartan-Dieudonne}\label{sec:kneser}
 
 In this section, we generalize Kneser's result in \cite{Kn81} and show that the stable special orthogonal group of an extended Mukai lattice is generated by reflective involutions. This leads to Theorem \ref{thm:thm2}.
 \subsection{Notation and conventions}
  Let $A=\ZZ$ or $\ZZ_p$ and let $B$ be the fraction field of $A$.  Denote by $A^\times$  (resp.~$B^\times$) the group of  invertible elements in $A$ (resp.~$B$). 
 Let $\Lambda$ be a lattice over $A$ with a bilinear form $(-,-)$ and set $q(x)=\frac{1}{2}(x,x)$. Then we set 
 \begin{itemize}
     \item $\rO(\Lambda)$:  the orthogonal group of $\Lambda$,
     \item  $\rS\rO(\Lambda)$:  the special orthogonal group,
     \item $\widetilde{\rO}(\Lambda)$: the stable orthogonal group which consists of elements in $\rO(\Lambda)$ acting trivially on the discriminant group $\Lambda^\vee/\Lambda$. 
 \end{itemize}
 If $\Lambda$ is integral over $\ZZ$,  we say $\Lambda$ satisfies condition $(\ast)$ if 
\begin{enumerate}
    \item [i)]  there exists vectors $\upsilon,\omega\in \Lambda$ such that  \begin{center}
         $q(\upsilon)=1$ and $q(\omega)=-1$. 
     \end{center}  
    \item [ii)] the Witt index of $\Lambda_\RR$ is $\geq 2$ and the Witt index of $\Lambda_\QQ$ is $\geq 1$.
\end{enumerate}

For $n\in A$, we use $\Lambda(n)$ to denote the twist lattice defined by the bilinear form $n(-,-)$ on $\Lambda\times \Lambda$.  Let $\rU$ be the hyperbolic plane whose gram matrix of the bilinear form  under a standard basis is given by
\begin{equation}\label{eq:hyp-basis}
\left(\begin{array}{cc}
0&1\\
1&0
\end{array}\right).
\end{equation}

\subsection{Orthogonal group and its subgroup} With the notation as above, 
 for each vector $b\in \Lambda$ with $q(b)\in A^\times$,  let  $$s_b(x)=x-(x,b)q(b)^{-1} b\in \rO( \Lambda)$$ be the reflection around  $b$ and we can further define
 \begin{itemize}
      \item $\rR(\Lambda)$: the subgroup generated by reflections $s_v$ with $q(v)\in A^\times$,
      \item $\rR^{\pm}(\Lambda)\subseteq \rR(\Lambda)\cap \rS\rO(\Lambda)$: the subgroup generated by  $s_a\circ s_b$ with $q(a)=q(b)=\pm 1$,
      \item $\rR^{+}(\Lambda)\subseteq \rR^{\pm}(\Lambda)$: the subgroup generated by the product $s_a\circ s_b$ with $q(a)=q(b)=1$. 
  \end{itemize}

  Next, recall that there is a spinor norm map   $$spin_{\Lambda}: \rO(\Lambda)\to B^\times /(B^\times )^2$$
   defined by sending a reflection $s_b$ to $q(b)$. We let $\rO^\dagger(\Lambda)$ be the subgroup consisting of elements with spinor norm $1$ and $\rS\rO^\dagger(\Lambda)=\rO^\dagger(\Lambda)\cap \rS\rO(\Lambda)$.
   
 Suppose $\Lambda$ is an even lattice defined over $\ZZ$.   Let us mention the relation between $\rO^\dagger(\Lambda)$ and the subgroup preserving the orientations that occurred in the introduction.    Define $\rO^+(\Lambda)$ to be the subgroup consisting of elements preserving the orientation. In general, $\rO^+(\Lambda)$ is not necessarily the same as  $\rO^\dagger(\Lambda(-1))$.  If  we consider the subgroup $$\widetilde{\rO}^+(\Lambda):=\rO^+(\Lambda)\cap \widetilde{\rO}(\Lambda),$$  then there is an identification given as below.

\begin{lemma}
$\widetilde{\rO}^+(\Lambda)=\widetilde{\rO}^\dagger(\Lambda(-1))$.
\end{lemma}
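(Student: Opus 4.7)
The plan is to translate both conditions into spinor-norm statements and then match them place by place. First, I would note the tautological identifications $\rO(\Lambda) = \rO(\Lambda(-1))$ and $\widetilde{\rO}(\Lambda) = \widetilde{\rO}(\Lambda(-1))$ inside $\rO(\Lambda_\QQ)$, since twisting by $-1$ preserves the underlying abelian group and the discriminant. Writing any $g$ in this common group as a product of reflections $s_{v_1} \cdots s_{v_k}$ in $\rO(\Lambda_\QQ)$, the identity $q_{\Lambda(-1)}(v) = -q_\Lambda(v)$ immediately yields the relation
$$ spin_{\Lambda(-1)}(g) = \det(g) \cdot spin_\Lambda(g) \in \QQ^\times/(\QQ^\times)^2, $$
so the lemma reduces to showing that for $g \in \widetilde{\rO}(\Lambda)$ the condition $g \in \rO^+(\Lambda)$ is equivalent to $spin_{\Lambda(-1)}(g) \equiv 1 \pmod{(\QQ^\times)^2}$.

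At the real place this would be a short calculation. Taking a real decomposition of $g$ and letting $a$ denote the number of $v_i$ with $q_\Lambda(v_i) > 0$, each such reflection contributes $-1$ to the orientation character $\chi^+$ on a maximal positive definite subspace while each negative-norm reflection contributes $+1$, so $\chi^+(g) = (-1)^a$ and hence $g \in \rO^+(\Lambda)$ iff $a$ is even. Meanwhile $spin_{\Lambda(-1),\RR}(g) = \prod_i (-q_\Lambda(v_i))$ has sign $(-1)^a$, which is trivial in $\RR^\times/(\RR^\times)^2$ exactly when $a$ is even. So the real component of the $\Lambda(-1)$-spinor-norm condition already matches the $\rO^+(\Lambda)$-condition, regardless of whether $g$ is stable.

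The main step would be to show, for $g \in \widetilde{\rO}(\Lambda)$ and every finite prime $p$, the triviality of the $p$-adic component $spin_{\Lambda(-1),\QQ_p}(g) \in \QQ_p^\times/(\QQ_p^\times)^2$; combined with the real analysis this yields the lemma. I would deduce this from a local analysis based on the Jordan decomposition of $\Lambda \otimes \ZZ_p$: since $g$ preserves $\Lambda_p$ and acts trivially on $\Lambda_p^\vee/\Lambda_p$, one can realize it as a product of generators on Jordan components (Eichler-type transvections and reflections in vectors whose norms are $p$-adic units), and for each generator type the product of $\Lambda(-1)$-norms is a square in $\ZZ_p^\times$. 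The main obstacle will be the case $p = 2$, where the Jordan decomposition is not unique and the interplay between the discriminant form and the spinor norm is more delicate; this reduces to a case-by-case verification based on the $2$-adic local invariants of $\Lambda$, which is a standard computation in the theory of $p$-adic quadratic forms as carried out in the work of Kneser and Nikulin.
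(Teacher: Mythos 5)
Your setup is fine: the identification $\rO(\Lambda)=\rO(\Lambda(-1))$, the identity $spin_{\Lambda(-1)}(g)=\det(g)\cdot spin_{\Lambda}(g)$, and the real-place computation are all correct, and together they already give the inclusion $\widetilde{\rO}^\dagger(\Lambda(-1))\subseteq\widetilde{\rO}^+(\Lambda)$ (a global square is positive, so its real component is trivial and your orientation computation applies). The gap is in your ``main step'': it is \emph{false} that $spin_{\Lambda(-1),\QQ_p}(g)$ is trivial in $\QQ_p^\times/(\QQ_p^\times)^2$ for every $g\in\widetilde{\rO}(\Lambda)$ and every finite $p$. Take $g=s_v$ for any $v\in\Lambda$ with $q_{\Lambda}(v)=1$; such $v$ always exist in the cases of interest (e.g.\ $(1,0,-1)\in\widetilde{\NS}(X)$ has square $2$), and $s_v$ lies in $\widetilde{\rO}(\Lambda)$ because $(x,v)\in\ZZ$ for all $x\in\Lambda^\vee$. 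Then $spin_{\Lambda(-1)}(s_v)=-1$, which is a non-square in $\QQ_p$ for every $p\equiv 3\pmod 4$ and for $p=2$. Indeed, if your claim held, combining it with your real-place analysis would force $\widetilde{\rO}(\Lambda)=\widetilde{\rO}^+(\Lambda)$, which is absurd. The conceptual reason your local strategy cannot be repaired as stated is that orientation is an archimedean condition, invisible prime by prime: among the generators of $\widetilde{\rO}(\Lambda_p)$ are reflections in vectors whose norm is a non-square unit, so the image of the local spinor norm is genuinely $\ZZ_p^\times(\QQ_p^\times)^2$ and not $(\QQ_p^\times)^2$; the Eichler transvections are harmless, but the reflection generators are exactly where your ``product of norms is a square in $\ZZ_p^\times$'' fails.

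The argument is repaired by proving only the weaker, correct local statement: $spin_{\Lambda(-1),\QQ_p}(g)\in\ZZ_p^\times\cdot(\QQ_p^\times)^2$ for all finite $p$ and all $g\in\widetilde{\rO}(\Lambda)$. A squarefree integer representing $spin_{\Lambda(-1)}(g)$ then has no prime divisors at all, hence $spin_{\Lambda(-1)}(g)=\pm 1$ in $\QQ^\times/(\QQ^\times)^2$, and your real-place computation identifies the sign with the orientation character, giving both inclusions at once. You would still have to verify the unit statement at $p=2$ for even lattices, which is the delicate part. The paper sidesteps this entirely: by Nikulin it extends $g$ by the identity to an isometry $g'$ of an even unimodular overlattice $\Lambda'$, invokes the fact (\cite[Theorem 6.2]{GMC02}) that integral isometries of unimodular lattices have rational spinor norm $\pm 1$, and uses $spin_{\Lambda(-1)}(g)=spin_{\Lambda'(-1)}(g')$. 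Either adopt that route or carry out the unit-norm local computation carefully, including at $p=2$.
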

\begin{proof}
As the reflection $s_b$ reverses orientation if $q(b)>0$ and preserves orientation if $q(b)<0$,  we have  $\widetilde{\rO}^\dagger(\Lambda(-1))\subseteq\widetilde{\rO}^+(\Lambda)$. On the other side, for $g\in \widetilde{\rO}^+(\Lambda)$.
it is a standard result of Nikulin that $\Lambda$ can be primitively embedded into an even unimodular lattice $\Lambda'$ such that $g$ can be extended to an orthogonal transform $g'\in \rO(\Lambda')$ and $g'|_{\Lambda^\perp}=\id$ (\cf.~\cite{Ni79}). Under this construction,  $g'$ is lying $ \widetilde{\rO}^+(\Lambda')=\rO^+(\Lambda')$ as it preserves the orientation of $\Lambda'$.

Viewing $g'$ as an element in $\rO(\Lambda'(-1))$, then    the spinor norm  $spin_{\Lambda'(-1)}(g')$ is  $\pm 1 ~{\rm mod}~ \QQ^*,$
by \cite[Theorem 6.2]{GMC02}. As  $g$ preserves orientation of $\Lambda'$, one must have  $spin_{\Lambda'(-1)}(g')=1$ and  hence $g'\in \rO^\dagger(\Lambda'(-1))$. Since  $spin_{\Lambda(-1)}(g)=spin_{\Lambda'(-1)}(g')$, it follows that $g\in \widetilde{\rO}^\dagger(\Lambda(-1))$. 
\end{proof}

\subsection{Generalization of Kneser's results}
We first extend the main result of \cite{Kn81} by allowing the reflection around  vectors with norm $-1$.  

\begin{theorem}\label{thm:Kneser2}
Let $\Lambda$ be an even lattice with the quadratic form $q$ and set $\Lambda_p=\Lambda\otimes \ZZ_p$ satisfying the condition $(\ast)$. 
\begin{enumerate}
    \item  If $\rank ~\Lambda \geq 4$ and $\rR^\pm(\Lambda)$ is a congruence subgroup,   we have \begin{equation}
    \rR^\pm (\Lambda)=\bigcap_p (\rS\rO(\Lambda)\cap \rR^{\pm}(\Lambda_{p})).
\end{equation}
In particular, this holds when $\rank~\Lambda\geq 5$. 
    \item $\rR(\Lambda_p)=\widetilde{\rO}(\Lambda_p)$ if either $p\neq 2$ or $p=2$  and $\bar{q}$ is not equivalent to  $x_1x_2$ and $x_1x_2+x_3x_4$.
    \item   $\rR(\Lambda_p)\cap \rS\rO^\dagger(\Lambda_p)=\rR^\pm (\Lambda_p)$  if one of the following conditions holds
    \begin{enumerate}
        \item [(3.a)]$p\neq 2$,
        \item [(3.b)] $p=2$,  $q\mod 2$ is not equivalent to  $x_1^2$.
    \end{enumerate} 
\end{enumerate}
\end{theorem}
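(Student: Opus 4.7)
The statement has one global part (1) and two local parts (2), (3). My plan is to establish the two local statements first, then derive (1) from them via strong approximation for the spin group.

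For part (2), I argue locally over $\ZZ_p$. When $p$ is odd, $\Lambda_p$ admits an orthogonal basis with unit norms, and reflections around basis vectors generate $\widetilde{\rO}(\Lambda_p)$ by a standard Cartan--Dieudonn\'e-type argument. When $p=2$, I apply the Jordan decomposition $\Lambda_2 = N \oplus J$, where $N$ is an orthogonal sum of rank-one odd summands and $J$ is built from hyperbolic planes $\rU$ and $\rU(2)$-blocks. The exceptional cases $\bar q \sim x_1 x_2$ and $\bar q \sim x_1 x_2 + x_3 x_4$ correspond precisely to $N = 0$ with $J$ of rank $2$ or $4$, where no unit-norm vector exists in $\Lambda_2$; in all other cases the odd part $N$ supplies such a vector, and standard reflection generation yields $\rR(\Lambda_2) = \widetilde{\rO}(\Lambda_2)$. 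Part (3) follows in parallel: the inclusion $\rR^\pm(\Lambda_p) \subseteq \rR(\Lambda_p) \cap \rS\rO^\dagger(\Lambda_p)$ is immediate from $spin_{\Lambda_p}(s_a \circ s_b) = q(a)q(b) = 1$, while for the reverse inclusion any factorization $g = s_{b_1} \circ \cdots \circ s_{b_{2n}}$ with $\prod q(b_i) \in (\QQ_p^\times)^2$ is regrouped into pairs, and each pair is replaced by a product $s_a \circ s_b$ with $q(a) = q(b) = \pm 1$ by inserting a unit-norm reflection whose existence is guaranteed by the hypothesis in (3.a) or (3.b).

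For part (1), observe that every generator $s_a \circ s_b$ of $\rR^\pm(\Lambda)$ has trivial spinor norm, so $\rR^\pm(\Lambda) \subseteq \rS\rO^\dagger(\Lambda)$. Under condition $(\ast)$ the group $\mathrm{Spin}(\Lambda_\QQ)$ is simply connected with non-compact real points, so Kneser's strong approximation applies to the arithmetic subgroup $\mathrm{Spin}(\Lambda)$. Consequently, a congruence subgroup of $\rS\rO^\dagger(\Lambda)$ is uniquely recovered from its $p$-adic closures, and combining this with part (3) at each prime identifies the closure of $\rR^\pm(\Lambda)$ with $\prod_p \bigl( \rS\rO^\dagger(\Lambda_p) \cap \rR^\pm(\Lambda_p) \bigr)$, yielding the intersection formula. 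When $\rank~\Lambda \geq 5$, the spin group has absolute rank $\geq 2$, so the affirmative answer to the congruence subgroup problem (Bass--Milnor--Serre, Matsumoto) ensures that any finite-index subgroup of $\rS\rO^\dagger(\Lambda)$ is automatically a congruence subgroup; one then verifies via the local surjectivity in (3) that $\rR^\pm(\Lambda)$ has finite index.

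The main obstacle is the 2-adic analysis underlying (2) and (3). The excluded cases are not merely technical: they represent genuine local obstructions where $\Lambda_2$ contains no unit-norm vector, so the pairing-and-replacement step collapses. Outside these degenerate cases the argument is a direct extension of Kneser's treatment of $\rR^-(\Lambda_p)$ in \cite[\S 3--4]{Kn81}; incorporating $+1$-norm reflections alongside $-1$-norm ones requires careful tracking of spinor norms across the Jordan decomposition but introduces no conceptually new ingredient beyond Kneser's framework.
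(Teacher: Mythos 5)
Your overall architecture (local statements first, then strong approximation to globalize) is the same as the paper's, which follows Kneser; parts (1) and (3.a) are essentially fine modulo the usual citations. But there is a genuine gap at the technical core of part (3) for $p=2$. After reducing to Kneser's Lemma 1, the whole content of (3.b) is to show that $s_a\circ s_c$ with $q(a)=q(c)=\pm 3$ lies in $\rR^\pm(\Lambda_2)$ (note $\ZZ_2^\times/(\ZZ_2^\times)^2=\{\pm 1,\pm 3\}$, so such pairs have trivial spinor norm but are not yet of the required form). Your "insert a unit-norm reflection" step only works if the \emph{rank-two sublattice} $\Omega=\langle a,c\rangle$ contains a vector $f$ with $q(f)=\pm 1$: one needs $s_f\circ s_a\circ s_c$ to collapse to a single reflection by Cartan--Dieudonn\'e in rank two, and the existence of a unit-norm vector somewhere in $\Lambda_2$ (which is what your hypothesis-matching provides) is useless for this. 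When $(a,c)\equiv 2\bmod 4$ one checks directly that every odd-norm vector of $\Omega$ has $q\equiv \pm 3\bmod 8$, so no such $f$ exists and the insertion trick fails outright. The paper handles exactly this case by a chain argument: it produces auxiliary vectors $b_0=a,b_1,\dots,b_k=c$ in the ambient lattice with $q(b_i)=\pm 3$ and $(b_i,b_{i+1})\not\equiv 2\bmod 4$, so that each consecutive pair admits the insertion, and this requires explicit computations in $\Lambda_2/8\Lambda_2$ split according to whether $q\bmod 2$ is $x_1x_2$, $x_1x_2+x_3x_4$, $x_1^2+x_2x_3$, or $x_1^2+x_2x_3+x_4x_5$. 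None of this is present in your proposal, and it is the step that actually distinguishes this theorem from Kneser's original result.

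Two smaller points. First, your explanation of the exceptional cases in (2) is wrong: for $\Lambda_2=\rU\oplus\Sigma(2)$ one has $\bar q\sim x_1x_2$ yet $q(e+f)=1$, so unit-norm vectors certainly exist; the true obstruction is a congruence obstruction mod $2\Lambda$ (every $s_v$ with $q(v)=\pm 1$ fixes $e-f$ mod $2\Lambda$, while Eichler transformations do not --- see the paper's counterexamples). Since (2) is Kneser's Satz 2 and can simply be cited, this does not sink the proof, but the stated mechanism is incorrect. Second, in (1) for $\rank\Lambda\geq 5$, your route of "finite index, then CSP" leaves the finite-index claim unsupported; the paper instead observes that $\rR^\pm(\Lambda)$ is a noncentral subgroup normalized by the arithmetic group and invokes the normal subgroup/congruence results of Kneser and Vaserstein to conclude it is a congruence subgroup directly.
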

\begin{proof}
For (1),   the proof is essentially the same as  \cite[Satz 1]{Kn81}. Let $v,w\in \Lambda $ with  \begin{center}
    $q(v)=1$ and $q(w)=-1$.
\end{center}  By our assumption, it contains a principal congruence subgroup of level $m$. Let $S$ be the collection of prime factors of $m$.  For each $p\in S$, we can write $$g=\prod\limits_{i=1}^k (s_{c^{(p)}_{2i-1}}\circ s_{c_{2i}^{(p)}})\in\rS\rO(\Lambda_p), $$
 for some  $c_i^{(p)}\in \Lambda_p$ with $q(c_{2i-1}^{(p)})=q(c_{2i}^{(p)})=\pm 1$. Here,  we can make $k$ and the sign of $q(c_i^{(p)})$ to be  independent of  $p\in S$ by suitably adding the trivial identity $s_v\circ s_v=\id$ and $s_w \circ s_w=\id$.  For $p\notin S$, we let $c_i^{(p)}$ be either $v$ or $w$ (depending on the norm of $c_i^{(p)}$). By \cite[Theorem 104:3]{OM00}, one can find $c_i\in \Lambda $ to approximate $\prod\limits_p c_i^{(p)}$ at places in $S$. Let $h=\prod\limits_{i=1}^k (s_{c_{2i-1}}\circ s_{c_{2i}})\in \rR^\pm(\Lambda)$. Then we have 
$h^{-1}\circ g $ is contained in the principal congruence subgroup of level $m$ and it follows that $g\in \rR^\pm(\Lambda)$.

For (2), this is \cite[Satz 2]{Kn81}.

For (3),  when $p\neq 2$ or $p=2$ and $q \mod 2$ is not equivalent to $x_1^2$,  Kneser has shown in \cite[Lemma 1]{Kn81} that 
\begin{equation}
    \rR(\Lambda_p)\cap \rS\rO^\dagger(\Lambda_p)=\left<s_a\circ s_c |~q(a)=q(c)\in \ZZ_p^\times\right>. 
\end{equation}
When $p>3$, this was proved in \cite[Lemma 2]{Kn81} that $$ \rR(\Lambda_p)\cap \rS\rO^\dagger(\Lambda_p)=\rR^+(\Lambda_p)=\rR^{\pm}(\Lambda_p).$$ When  $p=3$,  as $\ZZ_p^\times /(\ZZ_p^\times)^2=\{\pm 1\}$,  one can find 
$e\in \ZZ_p^\times $ such that $e^2=\pm q(a)$.  Note that $s_a\circ s_c=s_{a/e}\circ s_{c/e}$ and $q(a/e)=q(c/e)=\pm 1$. 
It follows that $\rR^\pm(\Lambda_p)=\rR(\Lambda_p)\cap \rS\rO^\dagger(\Lambda_p)$.

When $p=2$,  as $\ZZ_2^\times /(\ZZ_2^\times)^2=\{\pm 1, \pm 3\} \cong (\ZZ/2\ZZ)^2$, it suffices to show that for  any $a,c\in \Lambda_p$ with $q(a)=q(c)=\pm 3$, we have  $$s_a\circ s_c=\prod s_{a_{2i-1}}\circ s_{a_{2i}},$$ for some $a_i\in\Lambda_p$ satisfying $q(a_{2i-1})=q(a_{2i})=\pm 1$. If $q\mod 2$ is not equivalent to $x_1^2, x_1x_2, x_1^2+x_2x_3$,  $x_1x_2+x_3x_4$ or $x_1^2+x_2x_3+x_4x_5$, Kneser has actually proved in \cite[Lemma 1\& Lemma 2]{Kn81} that $s_a\circ s_c$ is a product of $s_{a_{2i-1}}\circ s_{a_{2i}}$ with $q(a_{2i-1})=q(a_{2i})=1$. For (3.b),  we are thus reduced to consider the case when $q\mod 2$ is equivalent to one of the following forms
\begin{equation}\label{eq:3form}
    x_1x_2,  x_1^2+x_2x_3, ~ x_1x_2+x_3x_4,  ~x_1^2+x_2x_3+x_4x_5.
\end{equation} Let $\Omega$ be the  the sublattice of $\Lambda_p$ spanned by $a$ and $c$.  Then we have 

\begin{enumerate}
    \item [i)] if  $\Omega$ is unimodular, i.e. $(a,c)\equiv 1 \mod 2$, then there exists $f\in \Omega$ such that $q(f)=1$.  As $\rank~\Omega=2$, we have $$s_f\circ(s_a\circ s_c)=s_g,$$ for some $g\in \Omega$. Since the spinor norm of $s_g$ is $1$, we have $q(g)=1$ as well. 

    \item [ii)]  if $(a,c)\equiv 0\mod 4$,   there does not exist $f\in \Omega$ with $q(f)=1$, but there exists $f'\in \Omega$ with $q(f')=-1$. We have $s_{f'}\circ(s_a\circ s_c)=s_{g'}$ for some $g'\in \Omega$ and $q(g')=-1$.  

        \item [ii')]  Similarly, if $q(a)=-q(b)=3$ and  $(a,b)\neq 2\mod 4$ for some $b\in \Lambda_p$,   there exists a vector $f$ lying in the sublattice $\left<a,b\right>$ spanned by $a$ and $b$ such that $q(f)=1$. Then $s_f\circ(s_a\circ s_b)=s_g$ for some $g\in \left<a,b\right>$ with $q(g)=-1$.

    \item [iii)] if $(a,c)\equiv 2 \mod 4$,  by i), ii) and ii'), we only need to find vectors $b_0,\ldots, b_k \in \Lambda_p$ such that $b_0=a, b_k=c$ and 
    \begin{equation}\label{eq:chain}
        q(b_i)=\pm 3 ~\hbox{and}~(b_i,b_{i+1})\not\equiv 2 \mod 4.
    \end{equation}
Set $\overline{\Lambda}=\Lambda_2/8\Lambda_2$. Then this is equivalent to find  vectors $\bar{b}_i\in \overline{\Lambda}$ with $\bar{b}_0=\bar a$ and $\bar b_k=\bar c$ such that   $q(\bar{b}_i)\equiv \pm 3\mod 8$ and $(\bar{b}_i,\bar{b}_{i+1})\not\equiv \pm 2 \mod 8$.  This can be achieved by the following computation.

1)  If $q\mod 2$ is $x_1x_2$, we can write  $\overline{\Lambda}$ as   $\overline{\rU}\oplus \overline{\Omega}(2)$ (\cf.~\cite[\S 15.4]{CS99}). Let $\{\bar{e},\bar{f}\}$ be the standard basis of $\overline{\rU}$ and set $$a\equiv  \alpha_1 \bar e+\beta_1\bar f +L, ~c\equiv  \alpha_2 \bar e+\beta_2 \bar f +L',$$ for $L, L'\in \overline \Omega(2)$.  Then $\alpha_1\beta_2+\alpha_2\beta_1 \notin (\ZZ/8\ZZ)^\times $ and $\alpha_i\beta_i \in (\ZZ/8\ZZ)^\times$.  Hence one can take $$\bar b_i=\begin{cases}\alpha_i \bar e- \beta_i \bar f~\in \overline \rU,~ &\hbox{if}~\alpha_i\beta_i=\pm 3\mod 8;\\
\alpha_i \bar e+3\beta_i \bar f\in \overline \rU,  &\hbox{if}~\alpha_i\beta_i=\pm 1\mod 8.
\end{cases}$$
This proves the assertion. 

2) If $q\mod 2$ is $x_1x_2+x_3x_4$,  as above, 
we can assume  $\overline \Lambda=\overline{\rU}^{\oplus 2}\oplus \overline{\Omega}(2)$.  Similar computation shows that we can find $$\bar b_1, \bar b_2 \in \overline{\rU}\oplus \overline{\rU},$$ such that  $q(\bar b_i)= 3\mod 8$, $(\bar a, \bar b_1)\neq \pm 2\mod 8$ and $(\bar c, \bar b_2)\neq  \pm 2\mod 8$. The assertion then follows from the fact   $s_{\bar b_1}\circ s_{\bar b_2}\in \rR^{\pm}(\overline{\rU}\oplus \overline{\rU})$ (\cf.~\cite{Wall63}).

3)   If $q\mod 2$ is $x_1^2+x_2x_3$ or $x_1^2+x_2x_3+x_4x_5$, one can conclude the assertion by similar computations. There is also another way to see this.  
Note that  $\overline{\Lambda}$ can be written  as the direct sum $\overline{\Lambda}=\overline{\rI}\oplus \overline{\Lambda}'$ where $m\in (\ZZ/8\ZZ)^\times $  and  $\overline{\Lambda}'$ satisfies the hypothesis of   1) or 2) respectively.  Then one can also prove the assertion by reducing it to  1) or 2) via the use of  the Eichler  transvection introduced in Example \ref{exm:prop-invo}. See Lemma \ref{lem:rel-involution} and the proof of Theorem \ref{thm:kne3} (3').  
\end{enumerate}
\end{proof}

\begin{corollary}\label{cor:ref}
Let $\Lambda$ be an even lattice satisfying $(\ast)$ and $\rank (\Lambda)\geq 5$.  Then $\rR^\pm(\Lambda)=\widetilde{\rS\rO}^\dagger(\Lambda)$ if $q\mod 2$ is not equivalent to $x_1^2, x_1x_2$  and $x_1x_2+x_3x_4$. 
\end{corollary}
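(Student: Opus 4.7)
The plan is to deduce the corollary as a local-global bookkeeping consequence of Theorem \ref{thm:Kneser2}. First I would verify the easy inclusion $\rR^\pm(\Lambda) \subseteq \widetilde{\rO}(\Lambda) \cap \rS\rO^\dagger(\Lambda)$: each generator $s_a \circ s_b$ with $q(a) = q(b) = \pm 1$ lies in $\rS\rO(\Lambda)$ by construction, has spinor norm $q(a)q(b) = 1$ in $\QQ^\times/(\QQ^\times)^2$, and acts trivially on the discriminant group because $q(a)^{-1} = \pm 1$ makes the formula $s_a(x) = x - (x,a)q(a)^{-1}a$ shift any element of $\Lambda^\vee$ by an element of $\Lambda$.

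For the reverse inclusion, since $\rank \Lambda \geq 5$, part (1) of Theorem \ref{thm:Kneser2} applies unconditionally and yields
\begin{equation*}
\rR^\pm(\Lambda) \;=\; \bigcap_p \bigl(\rS\rO(\Lambda) \cap \rR^\pm(\Lambda_p)\bigr).
\end{equation*}
I would then take any $g \in \widetilde{\rO}(\Lambda) \cap \rS\rO^\dagger(\Lambda)$ and, using functoriality of the spinor norm and of the action on the discriminant group under completion, observe that the image of $g$ in $\rO(\Lambda_p)$ lies in $\widetilde{\rO}(\Lambda_p) \cap \rS\rO^\dagger(\Lambda_p)$ for every prime $p$.

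The key point is that the hypothesis on $q \bmod 2$ is tailored precisely so that parts (2) and (3) of Theorem \ref{thm:Kneser2} both apply at every prime simultaneously: for $p \neq 2$ this is automatic, and at $p = 2$ the three excluded forms $x_1^2,\; x_1 x_2,\; x_1 x_2 + x_3 x_4$ are exactly the obstructions appearing in those two parts. Consequently (2) gives $g \in \widetilde{\rO}(\Lambda_p) = \rR(\Lambda_p)$, and then (3) gives $g \in \rR(\Lambda_p) \cap \rS\rO^\dagger(\Lambda_p) = \rR^\pm(\Lambda_p)$. Intersecting over all primes and with $\rS\rO(\Lambda)$ recovers $g \in \rR^\pm(\Lambda)$ via the displayed equation.

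The proof is essentially a packaging step: the real obstacles — strong approximation and the congruence subgroup theorem underlying part (1), together with the delicate $2$-adic case analysis in part (3) — have already been absorbed into Theorem \ref{thm:Kneser2}. The only thing to double-check is that the three exceptional binary/quaternary forms at $p = 2$ from parts (2) and (3) indeed collapse to the three forms listed in the corollary, which is immediate from inspection.
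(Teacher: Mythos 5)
Your proposal is correct and follows essentially the same route as the paper: deduce the statement by combining the local--global equality of Theorem \ref{thm:Kneser2}(1) (whose congruence-subgroup hypothesis is already guaranteed for $\rank\,\Lambda\geq 5$) with the local identifications from parts (2) and (3), checking that the excluded forms mod $2$ are exactly the local obstructions. Your added observations --- the direct verification that the generators of $\rR^\pm(\Lambda)$ lie in $\widetilde{\rS\rO}^\dagger(\Lambda)$, and that only the easy ``global implies local'' direction of the compatibility of $\widetilde{\rS\rO}^\dagger$ with completion is needed --- are correct and just make explicit what the paper leaves to the reader.
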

\begin{proof}
One may regard $\rR^\pm(\Lambda)$ as a nontrivial normal subgroup of the Spin group  $\mathrm{Spin}(\Lambda)$.      Under the hypothesis, it has been proved  in  \cite{Kn79,Va73} that $\rR^\pm(\Lambda)$ is  a congruence subgroup of $\mathrm{Spin}(\Lambda)$. The assertion follows directly from Theorem \ref{thm:Kneser2} and the fact 
      \begin{equation}
          \widetilde{\rS\rO}^\dagger(\Lambda)= \bigcap\limits_p (\rO(\Lambda) \cap  \widetilde{\rS\rO}^\dagger(\Lambda_p)).
      \end{equation}
     See the proof in \cite[Satz 4]{Kn72}. 
\end{proof}

\begin{example}[Symplectic involutions]
Combined with Theorem \ref{thm1}, the following computation gives an explicit proof of Bloch's conjecture for symplectic involutions.  Let $\Lambda=\ZZ\omega\oplus \rU(-1)\oplus E_8(-2)$ be the even lattice  whose  intersection matrix is given by   
 $${\left(\begin{array}{ccccccccccc} 
 2n & 0 & 0 & 0 & 0 & 0 & 0 & 0 & 0 & 0 & 0\\
0 & 0 & -1 & 0 & 0 & 0 & 0 & 0 & 0 & 0 & 0\\
0 &-1 & 0 & 0 & 0 & 0 & 0 & 0 & 0 & 0 & 0\\
0 &0& 0 & - 4 & 2 & 2 & 0 & 2 & 0 & 0 & 0\\
0 &0 & 0 & 2 & - 4 & 0 & 0 & 0 & 0 & 0 & 0\\
0 &0& 0 & 2 & 0 & - 4 & 2 & 0 & 0 & 0 & 0\\
0 & 0& 0 & 0 & 0 & 2 & - 4 & 0 & 0 & 0 & 0\\
0 & 0 &0 & 2 & 0 & 0 & 0 & - 4 & 2 & 0 & 0\\
0 & 0 & 0&0 & 0 & 0 & 0 & 2 & - 4 & 2 & 0\\
0 & 0 & 0 &0& 0 & 0 & 0 & 0 & 2 & - 4 & 2\\
0 & 0 & 0 & 0&0 & 0 & 0 & 0 & 0 & 2 & - 4
\end{array} \right)}.$$
Consider the natural involution $$\id_w\oplus \id_{\rU(-1)}\oplus -\id_{E_8(-2)}\in \rS\rO(\Lambda),$$
due to \cite[Proposition 2.2]{vS07}, the action of a symplectic involution on the extended N\'eron-Severi lattice of a $K3$ surface arise in this way. By a direct computation,  we have  
\begin{equation}
\id_\omega\oplus \id_{\rU(-1)}\oplus -\id_{E_8(-2)}=\prod \limits_{i=1}^{10} s_{v_i},
\end{equation}
where  $v_i\in \rU(-1)\oplus \rE_8(-2)\subseteq \Lambda$ are given by 
\begin{equation}
  \begin{aligned}
    &v_1=(1, -1, 3, 2, 2, 1, 2, 2, 2, 1), v_2=(1, -1, 4, 2, 3, 1, 3, 2, 2, 1), \\&  v_3=(1, -1, 0, 0, 0, 0, 0, 0, 1, 1), v_4=(1, -1, 0, 0, 0, 0, 0, 0, 1, 0), \\ & v_5=(1, -1, -1, 0, -1, -1, -1, -1, 0, 0),v_6=(1, -1, -2, -1, -1, -1, -2, -1, 0, 0),\\ 
   & v_7=(1, -1, 0, 0, 0, 0, -1, -1, 0, 0),v_8=(1, -1, -1, 0, 0, 0, -1,-1, 0, 0), \\ &v_9=(3, -3, 1, 1, 1, 0, 0, 0, 2, 1), v_{10}=(1, -1, 0, 0, 0, 0, 0, 0, 0, 0).  
\end{aligned}  
\end{equation}
\end{example}

In general, the condition $$q\not\sim x_1x_2, x_1x_2+x_3x_4 \mod 2$$  in Theorem \ref{thm:Kneser2} and Corollary \ref{cor:ref} is necessary. See the counterexamples below where Corollary \ref{cor:ref} fails. 

\begin{example}[$2$-adic obstructions]

\begin{enumerate}\label{exm:counter}
    \item Let $\Lambda=\rU\oplus \Sigma(2)$ for some even lattice $\Sigma$. Take a non-zero primitive vector $b\in \Sigma(2)$ and  consider the transformation $\rE_b\in \rO(\Lambda)$ defined by 
        \begin{equation}
    \rE_b(re+sf+z)=(-(b, z)+r-q(b)s)e+sf+z+sb,
\end{equation}
where $\{e,f\}$ is the standard basis  of $\rU$ giving \eqref{eq:hyp-basis}.
    It is not hard to see that $\psi$ is an element in $\widetilde{\rS\rO}^\dagger(\Lambda)$ (See Lemma \ref{lem:rel-involution}). Then we claim  $\rE_b$ is not lying in $\rR^\pm(\Lambda)$.  This is because  for any reflection $s_v$ with $q(v)=\pm 1$, we have  
    $$s_v(e-f)\equiv e-f\mod 2\Lambda,$$
    while $\rE_b(e-f)\equiv e-f+b \mod 2\Lambda$.

    \item Let $\Lambda=\rU^{\oplus 2}\oplus \Sigma(2)$ for some even lattice $\Sigma$.   Let $\iota\in \rS\rO(\Lambda)$ be the involution which switches the two hyperbolic planes.  Clearly, we have  $\iota \in  \widetilde{\rS\rO}(\Lambda)$ with spinor norm $-1$. Let $\{e_1,f_1,e_2,f_2\}$ be the standard basis of $\rU^{\oplus 2}$. After modulo everything by $2$, one can see that 
    $\bar{\iota} \notin \rR(\Lambda/2\Lambda)$ (\cf.~\cite[4.2]{Wall63}).  Then  $ s_{e_1-f_1}\circ s_{e_1+f_1}\circ \iota$ is contained in $ \widetilde{\rS\rO}^{\dagger}(\Lambda)$ but not in $\rR^{\pm}(\Lambda)$.

\end{enumerate}

\end{example}

\subsection{Reflective involutions on extended N\'eron-Severi lattice}

For our purpose, we only need to consider  the case  where $\Lambda$ contains a hyperbolic plane.  In this situation, besides the reflections, there is a rich class of reflective involutions.  Such involution was hidden in  the work of Eichler \cite{Ei52} and Wall \cite{Wall63}. They  play  an important role in the study of Hodge conjecture for rational Hodge isometries on $K3$ surfaces and abelian surfaces (\cf.~\cite{B19, Huy19}).

\begin{definition}\label{def:h-inv}
Let $\Lambda=\rU\oplus \Sigma$ be an integral  lattice over $\ZZ$ or $\ZZ_p$. For any $b\in \Sigma$ with $q(b)=n\in \ZZ$ or $\ZZ_p$,  we define $\psi_b:\Lambda\to \Lambda$ to be the map  given by
\begin{equation}
    \psi_b(re+sf+z)=((b, z)-r+q(b)s)e-sf+z+sb,
\end{equation}for $r,s\in A, z\in \Sigma$, where  $\{e,f\}$ is the standard basis of $\rU$. Moreover, we set
\begin{equation}
    \psi_{b,a}=s_a\circ \psi_b\circ s_a
\end{equation}to be the conjugation of $\psi_b$ by a reflection  $s_a$ with $a\in \Lambda$ and $q(a)=\pm 1$.  We may call $\psi_{b,a}$ a  {\it Huybrechts involution}., which is motivated by Huybrechts' work in \cite[\S 1.2]{Huy19} (see \S \ref{sec:tde} for more details). 
\end{definition}

\begin{example}\label{exm:prop-invo}Let us give some basic properties of Huybrecths involutions which will be frequently used in the later computation. 

\begin{enumerate}
\item When $\Lambda=\widetilde{\NS}(X)$ for some K3 surface $X$, a Huybrechts involution $\psi_{b,a}$ is a reflective involution. 
\item When $b=0$, $\psi_0$ is the product $s_{e-f}\circ s_{e+f}$. 
\item (\textbf{Eichler  transvection}) The product $\rE_b:=\psi_0\circ \psi_b$ is the Eichler  transvection, introduced in \cite{Ei52} (see also \cite{Wall63}, \cite{GHS09}).  It is given by  
\begin{equation}
    \rE_b(re+sf+z)= (-(b, z)+r-q(b)s)e+sf+z+sb.
\end{equation}
 Such transformation has occurred in  Example \ref{exm:counter} (1). Note that such transformation is lying in $\widetilde{\rS\rO}^\dagger(\Lambda)$ and it is additive, i.e. $\rE_b\circ \rE_{b'}=\rE_{b+b'}$ for $b,b'\in \Sigma$.  
    \item If $a=e-f$,  we have \begin{equation} 
    \psi_{b,e-f}(re+sf+z)= ((b, z)-s+q(b)r)f-re+z+rb,\end{equation}
    which sends $e$ to $-e+b+q(b)f$ and  $f$ to $-f$.

   % \item The involution $\widetilde{\psi}_b$ defined by  \begin{equation}  \widetilde{\psi}_b(re+sf+z)= ((b, z)+r+q(b)s)e+sf+z+sb, \end{equation}  is the product $ \psi_b \circ s_{e-f}\circ s_{e+f}$. 
     \item If $a=ne+mf\in \rU$ with $q(a)=nm\in \ZZ_p^\times$, then we have 
\begin{equation}\label{eq:conjugation}
\begin{aligned}
s_a\circ \psi_b \circ s_a(re+sf+z)& =    ((-\frac{mb}{n},z)-s +\frac{m^2}{n^2}q(b) r)f- re+ z-r\frac{mb}{n} \\
&= \psi_{-\frac{mb}{n}, e-f}(re+sf+z),
\end{aligned}
\end{equation}
remaining a Huybrechts involution.  
\end{enumerate}

\end{example}

A key observation is 
\begin{lemma}\label{lem:rel-involution}
Let $\Lambda$ be an even lattice over $\ZZ$. Then  $\psi_b\in \rS\rO(\Lambda)$ is a reflective involution with spinor norm $-1$. 
 Moreover, if $q(b)=2k+1$ is odd, then  there exist $v_1,v_2\in \Lambda$ with  $v_1^2=-v_2^2=-2$ such that  $\psi_b=s_{v_1}\circ s_{v_2}\in \rO(\Lambda)$.    
\end{lemma}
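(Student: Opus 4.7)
The first step is to compute $\psi_b$ on generators: one finds $\psi_b(e)=-e$, $\psi_b(f)=q(b)e-f+b$, and $\psi_b(z)=z+(b,z)e$ for $z\in\Sigma$. From this $\psi_b^2=\id$ is immediate, and isometry reduces to the single check $\psi_b(f)^2=-2q(b)+2q(b)=0$ (the remaining pairings are automatic). Solving $\psi_b(v)=v$ forces the $f$-coordinate of $v$ to vanish, so the $+1$-eigenspace has dimension $\rank\Sigma$, while the $-1$-eigenspace $P\subset\Lambda_\QQ$ is the nondegenerate $2$-plane with $\QQ$-basis $\{e,\ 2f-b\}$. In particular $\det(\psi_b)=1$, so $\psi_b\in\rS\rO(\Lambda)$. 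For condition~(2) of Definition~\ref{def:rde}, take $v=e$: it is primitive isotropic, $\psi_b(e)=-e$, and for any $re+z\in e^\perp=\ZZ e\oplus\Sigma$ we have $\psi_b(re+z)\equiv re+z\pmod{e}$. Since $\rU$ is unimodular, $\Lambda^\vee/\Lambda\cong\Sigma^\vee/\Sigma$, and $\psi_b(z)-z=(b,z)e\in\Lambda$ for every $z\in\Sigma^\vee$, so $\psi_b\in\widetilde{\rO}(\Lambda)$.

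For the spinor norm, I would use the general fact that an involution acting as $-\id$ on a nondegenerate $\QQ$-plane $P$ and as $\id$ on $P^\perp$ factors over $\QQ$ as $s_{u_1}\circ s_{u_2}$ for any orthogonal $\QQ$-basis $\{u_1,u_2\}$ of $P$, so its spinor norm equals $q(u_1)q(u_2)\equiv\det(\mathrm{Gram}(P))\pmod{(\QQ^\times)^2}$ (the factor $4$ coming from $(u_i,u_i)=2q(u_i)$ is a square). The Gram matrix of $P$ in the basis $\{e,\ 2f-b\}$ is $\bigl(\begin{smallmatrix}0&2\\2&2q(b)\end{smallmatrix}\bigr)$ with determinant $-4\equiv-1$ modulo squares, giving $spin(\psi_b)=-1$.

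For the final assertion, assume $q(b)=2k+1$ and set
\[
v_1=-(k+1)e+(2f-b),\qquad v_2=-ke+(2f-b),
\]
both of which lie in $\Lambda$. Using $(e,2f-b)=2$ and $(2f-b)^2=2q(b)=4k+2$, a direct computation yields $v_1^2=-2$, $v_2^2=+2$, and $(v_1,v_2)=0$; since $v_1,v_2\in P$, the composition $s_{v_1}\circ s_{v_2}$ acts as $-\id$ on $P$ and as $\id$ on $P^\perp$, hence equals $\psi_b$. The step that requires the most care is the integrality of the $\pm 2$-vectors in $P$: solving $(\alpha e+\beta(2f-b))^2=-2$ with $\beta=1$ forces $\alpha=-(1+q(b))/2$, which is an integer precisely when $q(b)$ is odd. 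This is exactly why the $(\pm 2)$-reflection factorization is confined to the odd case, whereas the spinor-norm claim in the preceding paragraph is unconditional via the $\QQ$-factorization.
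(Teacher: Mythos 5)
Your proof is correct and takes essentially the same route as the paper's: compute $\psi_b$ on the generators, take $v=e$ to verify condition (2) of the definition of a reflexive involution (including the check that $\psi_b\in\widetilde{\rO}(\Lambda)$), and exhibit two orthogonal integral vectors of square $-2$ and $+2$ spanning the $(-1)$-eigenplane when $q(b)$ is odd; your Gram-determinant computation of the spinor norm cleanly fills in what the paper dismisses as ``easy to see''. One remark: your vectors use $2f-b$ where the paper prints $2f+b$; since $\psi_b(2f-b)=-(2f-b)$ while $\psi_b(2f+b)\neq-(2f+b)$ for $b\neq 0$, your sign is the correct one for $\psi_b$ as defined, and the paper's printed $v_1,v_2$ in fact decompose $\psi_{-b}$ (a harmless sign typo there).
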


\begin{proof}
By a direct computation, we have $\psi_b(e)=-e$ and $\psi_b(f)=q(b)e-f+b$ and $\psi_b(z)=z+(b,z)e$. It is easy to see $\psi_b$ is an involution with spinor norm $-1$. If we take $v=e$ with $v^2=0$ , then we have  $\psi_b(v)=-v$ and 
$$\widetilde{\psi}_b:v^\perp/v \to v^\perp/v, $$ is the identity.  When $q(b)=2k+1$ is odd, take  $v_1=-(k+1)e+2f+b$ and $v_2=-ke+2f+b$.  A short computation shows that
$\psi_b=s_{v_1}\circ s_{v_2}$.    
\end{proof}

We define $\rW(\Lambda)\subseteq \rO(\Lambda)$ to be the subgroup generated by $\rR(\Lambda)$ and $\psi_{b,a}$.  Let $\rW^\pm(\Lambda)\subseteq \rS\rO(\Lambda)$ be the subgroup generated by $\rR^{\pm}(\Lambda)$ and even product of $\psi_{b,a}$. Or equivalent, $\rW^\pm(\Lambda)$ is generated by  $\rR^{\pm}(\Lambda)$ and $s_a\circ \rE_b\circ s_a$.   Every element in $\rW(\Lambda)$ can be written as 
\begin{equation}
    \prod\limits_{i=1}^k \psi_{b_i} \circ \prod\limits_{j=1}^l s_{v_j}, 
\end{equation}
with $b_i\in \Sigma$ and $v_j\in \Lambda$. 

%From the construction, the involution $\widetilde{\psi}_b$ is also lying in $\rW^\pm(\Lambda)$. 

\subsection{Group generated by the Huybrechts involutions}  Now we are going to prove our main result. 
\begin{theorem}\label{thm:kne3}
   Let $\Sigma$ be a non-degenerate even lattice  over $\ZZ$  and let $\Lambda=\rU\oplus \Sigma$. Then 
    \begin{enumerate}
        \item [$(1')$] if $\Sigma$ is indefinite,   $\rW^\pm(\Lambda)=\bigcap\limits_p (\rW^\pm(\Lambda_p) \cap \rS\rO(\Lambda)) $ when $\rW^\pm(\Lambda)$ is a congruence subgroup;
        \item [$(2')$] $\rW(\Lambda_p)=\widetilde{\rO}(\Lambda_p)$ for all $p$;
        \item [$(3')$] $\rW^\pm(\Lambda_p)=\widetilde{\rS\rO}^{\dagger}(\Lambda_p)$.
    \end{enumerate}
\end{theorem}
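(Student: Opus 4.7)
The plan is to establish the local statements $(2')$ and $(3')$ first by reducing to Theorem~\ref{thm:Kneser2} and handling the exceptional $2$-adic cases via Huybrechts involutions, and then deduce the global statement $(1')$ by strong approximation, following Kneser's template. The Huybrechts involutions $\psi_b$ and their conjugates $\psi_{b,a}$ are designed precisely to overcome the $2$-adic obstructions exhibited in Example~\ref{exm:counter} that prevent $\rR(\Lambda_p)$ from exhausting $\widetilde{\rO}(\Lambda_p)$: the Eichler transformation $\rE_b = \psi_0\circ\psi_b$ realizes the obstruction of Example~\ref{exm:counter}(1), and a composition of $\psi_b$'s for $b$ supported in a second hyperbolic plane will realize the switch of Example~\ref{exm:counter}(2).

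For $(2')$, Theorem~\ref{thm:Kneser2}(2) already gives $\rR(\Lambda_p) = \widetilde{\rO}(\Lambda_p) \subseteq \rW(\Lambda_p)$ for every prime $p$ except $p=2$ with $\bar q$ equivalent to $x_1 x_2$ or $x_1 x_2 + x_3 x_4$. In the first exceptional case $\Lambda_2 = \rU\oplus\Sigma'(2)$, one shows (as analyzed in \cite{Wall63}) that the quotient $\widetilde{\rO}(\Lambda_2)/\rR(\Lambda_2)$ is generated by Eichler transformations $\rE_b$, and since $\rE_b = \psi_0\circ\psi_b \in \rW(\Lambda_2)$ by definition, one concludes $\rW(\Lambda_2)=\widetilde{\rO}(\Lambda_2)$. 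In the second exceptional case $\Lambda_2 = \rU\oplus\rU\oplus\Sigma''(2)$, the additional obstruction is the switch $\iota$ of the two hyperbolic planes, which I will realize explicitly as a composition of Huybrechts involutions $\psi_b$ with $b$ ranging in the second hyperbolic plane, combined with reflections, via the conjugation identity \eqref{eq:conjugation}.

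For $(3')$, since $\Lambda$ contains $\rU$, the reduction $\bar q$ always contains a factor $x_1 x_2$, so the exceptional case of Theorem~\ref{thm:Kneser2}(3.b) never arises and $\rR(\Lambda_p)\cap\rS\rO^\dagger(\Lambda_p)=\rR^\pm(\Lambda_p)$ at every prime. Lemma~\ref{lem:rel-involution} gives that each $\psi_b$ has spinor norm $-1$ modulo squares, hence so does $\psi_{b,a}$, so every even product lies in $\rS\rO^\dagger$ and $\rW^\pm(\Lambda_p)\subseteq\widetilde{\rS\rO}^\dagger(\Lambda_p)$. For the reverse inclusion I use $(2')$ to write $\widetilde{\rO}(\Lambda_p) = \langle \rR(\Lambda_p), \rE_b\rangle$, and since $\rE_b \in \rS\rO^\dagger(\Lambda_p)$, a careful commutation argument, moving Eichler transformations past reflections using identities of the form $s_v\cdot \rE_b = (s_v \rE_b s_v)\cdot s_v$ and verifying that such conjugates remain expressible in $\rW^\pm$, shows that $\widetilde{\rS\rO}^\dagger(\Lambda_p)$ is generated by $\rR^\pm(\Lambda_p)$ together with Eichler transformations, and is therefore contained in $\rW^\pm(\Lambda_p)$.

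Finally for $(1')$, I adapt the strong-approximation argument of Theorem~\ref{thm:Kneser2}(1). Given $g\in\bigcap_p(\rW^\pm(\Lambda_p)\cap\rS\rO(\Lambda))$, the congruence-subgroup hypothesis supplies a level $m$; for each $p\mid m$ I express $g$ locally as a product of reflections and Huybrechts involutions, adding trivial factors $s_v\circ s_v=\id$ and $\psi_0\circ\psi_0=\id$ to harmonize lengths and parities across the primes dividing $m$. Strong approximation in $\Lambda$ via \cite[Theorem 104:3]{OM00}, using the indefiniteness of $\Sigma$, produces global vectors approximating the local data at the primes of $m$, yielding an element $h\in \rW^\pm(\Lambda)$ so that $h^{-1}g$ lies in the principal congruence subgroup of level $m$ and hence in $\rW^\pm(\Lambda)$. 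The main anticipated obstacle is the explicit $2$-adic realization of the hyperbolic switch in $(2')$ as a product of Huybrechts involutions, which requires a delicate but finite case analysis parallel in spirit to the computations of Wall \cite{Wall63} on the structure of $\widetilde{\rO}/\rR$ over $\ZZ_2$.
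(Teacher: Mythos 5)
Your overall architecture — prove the local statements $(2')$ and $(3')$ first, then globalize $(1')$ by strong approximation against the congruence subgroup — is exactly the paper's, and your treatments of $(1')$ and $(3')$ match the paper's in substance: for $(3')$ the key observation that $q_\Lambda\bmod 2$ always contains $x_1x_2$, so the exceptional case of Theorem \ref{thm:Kneser2}(3.b) never occurs, is precisely what the paper uses, and the forward inclusion via determinant and spinor norm of $\psi_{b,a}$ is fine.

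The gap is in $(2')$, which is the crux of the theorem. Your plan rests on two unproven claims: (a) that $\widetilde{\rO}(\Lambda_2)/\rR(\Lambda_2)$ is generated by Eichler transformations $\rE_b$ when $\Lambda_2=\rU\oplus\Sigma'(2)$, and by these together with the hyperbolic switch when $\Lambda_2=\rU\oplus\rU\oplus\Sigma''(2)$; and (b) that the switch is a product of Huybrechts involutions and reflections. Neither is carried out. Claim (a) cannot simply be cited from \cite{Wall63}: Wall computes orthogonal groups of the $\ZZ/2$-reductions of specific forms, and extracting the needed statement about the quotient over $\ZZ_2$ with the stable condition requires an argument of essentially the same difficulty as the theorem itself; claim (b) is exactly the "delicate but finite case analysis" you defer, and it is the whole content of the exceptional case. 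The paper avoids both issues with a different and much shorter device: given $g\in\widetilde{\rO}(\Lambda_2)$, one first uses a single reflection $s_{e+g(e)}$ (or a short word in reflections and one $\psi_b$ supported in the second hyperbolic plane) to arrange $g(e)=-e$, then one Huybrechts involution $\psi_b^{-1}$ to make $g|_{\rU}=\id$; the corrected $g$ then preserves the sublattice $\widehat\Lambda=\ZZ(e+f)\oplus\Sigma$, whose mod-$2$ quadratic form acquires a summand $x_1^2$ and therefore falls outside Kneser's exceptional list, so Theorem \ref{thm:Kneser2}(2) applied to $\widehat\Lambda_2$ already gives $g|_{\widehat\Lambda}\in\rR(\widehat\Lambda_2)$, hence $g\in\rW(\Lambda_2)$. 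In particular the Huybrechts involutions are only needed for the diagonalization step, and no structural description of the quotient $\widetilde{\rO}/\rR$ or explicit factorization of the switch is required. I would either adopt this sublattice trick or supply complete proofs of (a) and (b) before the argument for $(2')$ — and hence for the theorem — can be considered closed.
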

\begin{proof} The idea is similar to  Theorem \ref{thm:Kneser2}, but the proof is more subtle.  We use $q_\Sigma$ to denote the  quadratic form on $\Sigma$.

\subsubsection*{(1')}   Under our assumption, $\rW^\pm(\Lambda)$ contains a principal congruence subgroup $\Gamma(m)$ of level $m$.  As before, we let $S$ be the collection of prime factors of $m$.  As $\rW^{\pm}(\Lambda_p)=\rR^\pm(\Lambda_p)$ when $p\neq 2$,  the assertion follows from  the same argument in  Theorem \ref{thm:Kneser2} (1)  if $2 \notin S$. 

Suppose $2\in S$.  For $g\in \bigcap\limits_p (\rW^\pm(\Lambda_p) \cap \rS\rO(\Lambda))$,  by first viewing it as an element in   $\rS\rO(\Lambda_2)$, we can  write $g$  as a product 
\begin{equation}
 g=  \prod\limits_{j=1}^k  \psi_{b_j^{(2)}, a_j^{(2)}}\circ \prod\limits_{i=1}^l  (s_{c^{(2)}_{2i-1}} \circ s_{c^{(2)}_{2i}}),
\end{equation}
 where  $b_i^{(2)}\in \Sigma_2, a_i^{(2)}, c_i^{(2)}\in \Lambda_2$ satisfying  $q(c_{2i-1}^{(2)})=q(c_{2i}^{(2)})=\pm 1$. Then we can find vectors in $b_i\in \Sigma$ and $a_i\in \Lambda$ with $q(a_i)=q(a_i^{(2)})$ such that $b_i\equiv b_i^{(2)}\mod 2$ and $a_i\equiv a_i^{(2)}\mod 2$.  Again, the existence of such $a_i$ is ensured by the strong approximation. 

 Replace $g$ by $ (\prod\limits_{j=1}^k  \psi_{b_j, a_j} )^{-1} \circ g$, as in Theorem \ref{thm:Kneser2} (1), we can find an element $h\in \rR^\pm(\Lambda)$ to approximate $g$ at places in $S$, which means $h^{-1}\circ g$ is lying in the principal congruence subgroup $\Gamma(m)$. It follows  $g\in \rW^\pm(\Lambda)$ as well.

\subsubsection*{(2')} By Theorem \ref{thm:Kneser2} (2),  we only need to treat the case when $p=2$ and $q_\Sigma \mod 2$ is equivalent to $0$ or $x_1x_2$.

 If $q_\Sigma =0 \mod 2$,  for any $g\in \widetilde{\rO}(\Lambda_2)$,  one can first diagonalize $g$  by using reflections and  $\psi_{b,a}$. Indeed, if we write
\begin{equation}
    g(e)=n_1e+m_1f+z_1 ~\hbox{and}~g(f)=n_2e+m_2f+z_2,
\end{equation}
with $n_i, m_i\in \ZZ_2$ and $z_i\in \Sigma_2$,  then we have $n_1$ or $m_1\in \ZZ_2^\times$ and $n_2$ or $m_2\in \ZZ_2^\times$ because  $(g(e), g(f))=-1$ and $(z_1,z_2)\notin \ZZ_2^\times$.  Assume $m_1\in\ZZ_2^\times$, otherwise we can multiply $g$ with the reflection $s_{e-f}$ which switches $e$ and $f$. Then there will be $$q(e+g(e))=(e, g(e))=m_1\in \ZZ_2^\times ,$$ 
and  the product $s_{e+g(e)}\circ g$   sends $e$ to $-e$ and $f$ to $q(b)e-f+b$ for some $b\in \Sigma_2$.   Hence $(\psi_b^{-1} \circ s_{e+g(e)}\circ g) |_{\rU}=\id$.  

Replace $g$ by its diagonalization  $\psi_b^{-1} \circ s_{e+g(e)}\circ g $ and consider the sublattice 
\begin{equation}\label{eq:diag}
    \widehat{\Lambda}:=\ZZ (e+f)\oplus \Sigma \subseteq \Lambda.
\end{equation} 
Then  $q_{\widehat{\Lambda}_2}\mod 2$ is equivalent to  $x_1^2$. Note that the restriction $g|_{\widehat{\Lambda}_2}$ also acts trivially on the discriminant group of $\widehat{\Lambda}_2$,  we have  $g|_{\widehat{\Lambda}} \in \rR(\widehat{\Lambda}_2) $
by Theorem \ref{thm:Kneser2} (2).  Since $g$ acts as identity on the orthogonal complement of $\widehat{\Lambda}_2$ in $\Lambda_2$, we have $g\in \rR(\Lambda_2)$ as well. 

If $q_\Sigma\mod 2$ is equivalent to $x_1x_2$,  then $\Sigma_2\cong \rU\oplus \Omega(2)$, where  $\Omega$ is a $\ZZ_2$-lattice.  Again, let us first show any  $g\in\widetilde{\rO}(\Lambda)$ can be diagonalized.   If $g(e)=-e$,  there must be 
$$g(f)=-f+q(b)e+b,$$ for some $b\in \Sigma_2$. Then the restriction $(\psi_b^{-1}\circ g )|_{\rU}$  is the identity.  So the problem becomes  to find $h\in \rW(\Lambda_2)$ such that $h\circ g(e)=-e$.  Let $\{e',f'\}$ be the standard basis of $\rU$ and we set 
 $$g(e)=ne+mf+(n'e'+m'f')+w,$$ for some $w\in \Omega(2)$.  If $m$ or $n\in \ZZ_2^\times$, we can take $h=s_{e+g(e)}$ or $s_{e+f}\circ s_{f-g(e)}$ respectively . If $n, m\notin \ZZ_2^\times$, one must have $n'$ or $m' \in\ZZ_2^\times $ since $(g(e),g(f))=1$.  Without loss of generality, we can assume $n'\in \ZZ_2^\times$.  Then  we get
 $$\psi_{e'}\circ s_{f'-g(e)}\circ g(e)=\psi_{e'}(f')=f'+e.$$ 
As $q(f-e-f')=-1\in \ZZ_2^\times$,  we can take $h=s_{e+f}\circ s_{f-e-f'}\circ \psi_{e'}\circ s_{f'-g(e)}$ as desired.

Now suppose $g|_{\rU}=\id$.  Consider the sublattice $\widehat{\Lambda}\subseteq \Lambda$ defined  in \eqref{eq:diag},  we have $q_{\widehat{\Lambda}}\mod 2$ is equivalent to $x_1^2+x_2x_3$. Then the restriction $g|_{\widehat{\Lambda}}$ is contained in $\rR(\widehat{\Lambda}_2)$ which also deduces  $g\in \rR(\Lambda_2)$.

\subsubsection*{(3').} Due to $(1')$, $(2')$ and Theorem \ref{thm:Kneser2} (3), it suffices to consider the case $q_\Sigma =0 \mod 2$ and show that the product  $s_a\circ s_c, a,c\in\Lambda_2$ with $q(a)=q(c)=\pm 3$  is contained in $\rW^\pm(\Lambda_2)$. In this case, $s_a\circ s_c\in \rR^{\pm}(\Lambda_2)\subseteq \rW^{\pm}(\Lambda_2)$ by Theorem \ref{thm:Kneser2} (3) since $q_{\Lambda}\mod 2$ is equivalent to $x_1 x_2$.

\end{proof}

\begin{remark}
  We find that  there is a similar result in  \cite[Theorem 7.9]{MM09}  for Theorem \ref{thm:kne3} (2') by using the generalized Eichler transvections.
\end{remark}

This  immediately yields
\begin{corollary}\label{cor:W=O}
If the Witt index of $\Lambda_\RR$ is at least $2$ and $\rank(\Sigma)\geq 3$,  then $\rW(\Lambda)=\widetilde{\rO}(\Lambda)$. 
\end{corollary}
\begin{proof}
 The proof is similar as  Corollary \ref{cor:ref}. Under our assumption, $\rW^\pm(\Lambda)$ is a congruence subgroup of  $\rS\rO(\Lambda)$.  By Theorem \ref{thm:kne3}, we have  $\rW^\pm(\Lambda)= \widetilde{\rS\rO}^\dagger(\Lambda)$. This also implies $\rW(\Lambda)=\widetilde{\rO}(\Lambda)$ as the coset $\widetilde{\rO}(\Lambda)/\widetilde{\rS\rO}^\dagger(\Lambda)$ is generated by $\pm 2$ reflections. 
\end{proof}

\begin{remark}\label{rmk:pic1}
When $\rank~\Sigma=1$,  we still have $\widetilde{\rS\rO}^{\dagger}(\Lambda_p)= \rW^{\pm}(\Lambda_p)$ for any $p$. However, as $\rW^{\pm}(\Lambda)$ may not be congruence, the assertion in Corollary \ref{cor:W=O} fails in general.  See the counterexample in \S \ref{sec:pic1}. 
\end{remark}

\subsection{Proof of Theorem \ref{thm:thm2}}
By Theorem \ref{thm1},  as $\Phi^{\widetilde{\NS}}$ is lying in the stable orthogonal group $\widetilde{\rO}(\widetilde{\NS}(X))$,  we only need to show that $\widetilde{\rO}(\widetilde{\NS}(X))$ is generated by  reflective involutions. 

We can apply Corollary  \ref{cor:W=O} and Proposition \ref{prop:unigonal}  to the lattice $\widetilde{\NS}(X)(-1)$. They ensure  that $\widetilde{\rO}(\widetilde{\NS}(X))$ is generated by the Huybrechts involutions and $\pm 2$ reflections.  As mentioned in Example \ref{exm:prop-invo} (1), any Huybrechts involution is a reflective involution. The assertion then follows.

\section{Bloch's conjecture for  $K3$ surfaces with small Picard number }\label{sec:pic12}
In this section, we will focus on the case  $\rank (\NS(X))\leq 2$ and provide examples where the identity $\rR^\pm(\Lambda)=\widetilde{\rS\rO}^\dagger(\Lambda)$ in Corollary \ref{cor:ref} remains valid.  Moreover, we use Huybrechts' autoequivalence to outline an approach to Bloch's conjecture for symplectic automorphisms on $K3$ surfaces.

\subsection{$K3$ surface of  Picard number one}\label{sec:pic1}
If $\rank (\NS(X))=1$,  we can set $\NS(X)=\ZZ L$ with $L^2=2n$. Then  we have
\begin{enumerate}
    \item $\Aut(X)\cong 0$ or $\ZZ/2\ZZ$. 
    \item Every autoequivalence is symplectic or anti-symplectic. 
    
\end{enumerate}

In this case,  the question of whether $\rW(\widetilde{\NS}(X))=\widetilde{\rO}(\widetilde{\NS}(X))$ becomes quite subtle.  We  will focus on case when $n$ is small.  
First of all, we have 
\begin{lemma}
Set $\mathbf{L_n}=\widetilde{\NS}(X)(-1)$. Then 
    $\rR^\pm(\mathbf{L_n})=\rW^\pm(\mathbf{L_n})$ if and only if $n$ is odd. 
\end{lemma}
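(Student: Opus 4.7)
The plan is first to reduce the identity $\rR^\pm(\mathbf{L}_n) = \rW^\pm(\mathbf{L}_n)$ to a statement about a single element. Since $\rW^\pm$ is generated by $\rR^\pm$ together with all conjugates $s_a \circ \rE_b \circ s_a$ (with $q(a) = \pm 1$), and since $\rR^\pm$ is normal under conjugation by reflections (the sign of $q$ is preserved under $u \mapsto s_a(u)$), one has $s_a \rE_b s_a \in \rR^\pm$ if and only if $\rE_b \in \rR^\pm$. Additivity of the Eichler transformation together with $\Sigma = \ZZ L$ then reduces everything to the single question: is $\rE_L$ contained in $\rR^\pm(\mathbf{L}_n)$?

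For the easy direction ($n$ even $\Rightarrow$ inequality), I would exhibit a mod-$2$ obstruction. When $n$ is even, every $v = a e + b f + c L$ with $q(v) = a b - n c^2 = \pm 1$ has $ab$ odd, so $a, b$ are both odd and the reduction $\bar v \in \mathbf{L}_n / 2\mathbf{L}_n$ lies in $\{\bar e + \bar f,\ \bar e + \bar f + \bar L\}$; in particular $\bar v = \bar L$ is not achievable. A direct computation then shows that the image of $\rR^\pm$ in $\rO(\mathbf{L}_n / 2\mathbf{L}_n)$ is the order-two subgroup generated by $\bar s_{\bar e + \bar f}\,\bar s_{\bar e + \bar f + \bar L}$, whereas $\bar \rE_L$ acts by $\bar e \mapsto \bar e,\ \bar f \mapsto \bar f + \bar L,\ \bar L \mapsto \bar L$ and visibly falls outside this subgroup; hence $\rE_L \notin \rR^\pm$, while $s_a \rE_L s_a \in \rW^\pm$ by construction.

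For the hard direction ($n$ odd $\Rightarrow$ equality), I would use that $q(L) = -n$ is odd to invoke Lemma \ref{lem:rel-involution}, yielding $\psi_L = s_{v_1} \circ s_{v_2}$ with $v_1 = \tfrac{n-1}{2}\,e + 2 f + L$, $v_2 = \tfrac{n+1}{2}\,e + 2 f + L$, $q(v_1) = -1$, $q(v_2) = +1$. Combined with $\psi_0 = s_{e-f} \circ s_{e+f}$ this gives $\rE_L = s_{e-f}\,s_{e+f}\,s_{v_1}\,s_{v_2}$, a product of four reflections around vectors of norm $\pm 2$ whose total spinor norm is $+1$, so at least $\rE_L \in \rR(\mathbf{L}_n) \cap \rS\rO^\dagger(\mathbf{L}_n)$. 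For $n = 1$ there is a cleaner realisation $\rE_L = s_{e+L} \circ s_L$ with both reflecting vectors having $q = -1$, which places $\rE_L$ in $\rR^-$ directly.

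The hard part will be the $n \geq 3$ odd case: since the fixed space of $\rE_L$ is the isotropic line $\ZZ e$, any two-reflection factorisation of $\rE_L$ must use vectors in $\langle e, L\rangle$ of norm $-n\gamma^2$, which is never $\pm 1$ for $n \geq 3$. One therefore needs to upgrade the four-reflection product above to a product of same-sign pairs, equivalently to establish the global identity $\rR(\mathbf{L}_n) \cap \rS\rO^\dagger(\mathbf{L}_n) = \rR^\pm(\mathbf{L}_n)$. Locally at every prime this follows from Theorem \ref{thm:Kneser2}(2) together with (3.b), because $q \bmod 2 \sim x_1^2 + x_2 x_3$ falls outside every excluded class; the crux will be promoting this local equality to a global statement for the rank-$3$ lattice $\mathbf{L}_n$, where the Kneser--Vaserstein congruence-subgroup property is unavailable and one must instead exploit the specific structure of $\mathbf{L}_n$ (for example by rewriting $s_{e+f}\,s_{v_1}$ as a product of two same-sign reflections after conjugating by an appropriate element of $\rR^\pm$).
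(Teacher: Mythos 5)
Your reduction to the single question ``is $\rE_L \in \rR^\pm(\mathbf{L_n})$?'' and your treatment of the even case are correct and match the paper, which cites exactly the mod-$2$ obstruction of Example \ref{exm:counter}(1) (there it is enough to track the image of $e-f$ modulo $2\mathbf{L_n}$). The odd case, however, is left with a genuine gap: you establish only that $\rE_L = s_{e-f}\,s_{e+f}\,s_{v_1}\,s_{v_2} \in \rR(\mathbf{L_n})\cap\rS\rO^\dagger(\mathbf{L_n})$ and then declare that promoting this to membership in $\rR^\pm(\mathbf{L_n})$ requires a local-to-global argument for a rank-$3$ lattice. That final step is not a local-global issue at all; over $\ZZ$ the only unit norms are $\pm1$, so the subtlety of Theorem \ref{thm:Kneser2}(3) (which concerns $\ZZ_2$-units of norm $\pm3$) never arises, and the identity $\rR(\Lambda)\cap\rS\rO^\dagger(\Lambda)=\rR^\pm(\Lambda)$ follows from an elementary regrouping. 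Concretely, using $s_{e+f}\,s_{v_1}=s_{s_{e+f}(v_1)}\,s_{e+f}$ one gets
\begin{equation*}
\rE_L \;=\; \bigl(s_{e-f}\, s_{s_{e+f}(v_1)}\bigr)\cdot\bigl(s_{e+f}\,s_{v_2}\bigr),
\end{equation*}
and since $q(s_{e+f}(v_1))=q(v_1)=-1=q(e-f)$ while $q(e+f)=q(v_2)=+1$, both factors are generators of $\rR^\pm(\mathbf{L_n})$. Note also that your proposed repair --- rewriting $s_{e+f}\,s_{v_1}$ as a product of two same-sign reflections after conjugation --- cannot work, because that element has spinor norm $-1$ and conjugation preserves spinor norm; the correct move is to regroup the four reflections as above rather than to fix that particular pair.
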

\begin{proof}
 Due to Lemma \ref{lem:rel-involution}, Example \ref{exm:counter} and Example \ref{exm:prop-invo}, we have 
\begin{itemize}
    \item  $\rE_{L}=\psi_L\circ s_{e-f}\circ s_{e+f}\in \rR^\pm(\mathbf{L_n})$ if and only if $n$ is odd. 
    \item  $\rE_{mL}=(\rE_L)^m$.
\end{itemize}
As $\rW^\pm(\mathbf{L_n})$ is generated by   $\rR^\pm(\mathbf{L_n})$ and conjugates of $\rE_{mL}$, this proves the first assertion.
\end{proof}

For hyperbolic lattices of rank $>2$, it is well-known that there are only finitely many isomorphism classes $\Lambda$ such that the Weyl group  $\rR^-(\Lambda)$ which by definition is generated by the reflections $s_v$ with $v^2=-2$ has finite index in $\rO(\Lambda)$ (\cf.~\cite{Nik83}, \cite{Nik85}, \cite{Vin07}).  When   $\Lambda=\mathbf{L_n}$, this happens if and only if $n=1$.  In the following, let us compute the index of $\rW^\pm(\mathbf{L_n})$ in  $\rS\rO^{\dagger}(\mathbf{L_n})$ for small values of $n$.

\begin{proposition}\label{prop:picard1}
    Let $\mathcal{N}$ be the set of integers 
    \[ \{ 1,2,\ldots 11, 13,\ldots, 17, 19, 21, 23, 25, 26, 29, 31, 41, 49 \}.\] Then $\widetilde{\rO}(\mathbf{L_n})=\rW(\mathbf{L_n})$ for all $n\in \mathcal{N}$. 
\end{proposition}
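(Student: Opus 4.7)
The plan is to combine the local results of Theorem \ref{thm:kne3} (2$'$) with an explicit computation in the hyperbolic plane. Recall that $\mathbf{L_n}=\widetilde{\NS}(X)(-1)\cong \rU\oplus \ZZ L'$ with $(L')^2=-2n$ has signature $(1,2)$, so $\rO(\mathbf{L_n})$ acts discretely on the hyperbolic plane $\mathbb{H}^2$ realized as the projectivisation of the positive cone. Both $\rR(\mathbf{L_n})$ and $\rW(\mathbf{L_n})$ are visibly finite-index subgroups of $\rO(\mathbf{L_n})$ when $n$ is small, and the aim is to pin down the index of $\rW(\mathbf{L_n})$ inside $\widetilde{\rO}(\mathbf{L_n})$.

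First, Theorem \ref{thm:kne3} (2$'$) applied to $\Sigma=\ZZ L'$ gives the local equality $\rW(\mathbf{L_{n,p}})=\widetilde{\rO}(\mathbf{L_{n,p}})$ at every prime $p$. Because $\rank \mathbf{L_n}=3$, the rational Witt index equals $1$ and strong approximation for the associated spin group fails, so this local statement does \emph{not} immediately give $\rW(\mathbf{L_n})=\widetilde{\rO}(\mathbf{L_n})$. Instead it reduces the problem to controlling a class-group-type obstruction, which I will handle case by case for $n\in\mathcal{N}$.

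Next, for each $n\in\mathcal{N}$ I would run a Vinberg-type algorithm adapted to the enlarged generating set of $\rW(\mathbf{L_n})$. Starting from an interior point $v_0\in \mathcal{P}^+(\mathbf{L_n})$, enumerate in order of height:
\begin{enumerate}
\item vectors $\delta\in\mathbf{L_n}$ with $\delta^2=\pm 2$ and $\delta^2=\pm 1$, giving the reflections $s_\delta\in\rR(\mathbf{L_n})$;
\item pairs $(b,a)$ with $b\in\Sigma$, $a^2=\pm 1$, giving the Huybrechts involutions $\psi_{b,a}\in \rW(\mathbf{L_n})$, whose fixed loci contribute additional walls in $\mathbb{H}^2$.
\end{enumerate}
For the $n$ in the prescribed list the resulting collection of walls bounds a finite-sided convex polygon $P_n\subset\mathbb{H}^2$; this is the sense in which $\mathbf{L_n}$ is ``reflective with respect to $\rW$.'' One then verifies, using the explicit description of $\widetilde{\rO}(\mathbf{L_n})$ coming from reduction theory of indefinite ternary forms (or equivalently the arithmetic Fuchsian group attached to the discriminant $-2n$), that $P_n$ has no nontrivial symmetry lying in $\widetilde{\rO}(\mathbf{L_n})$. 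This forces $\rW(\mathbf{L_n})=\widetilde{\rO}(\mathbf{L_n})$.

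The hard part will be step (2) of the algorithm. Huybrechts involutions are not reflections, so the classical Vinberg algorithm does not apply out of the box: one must track walls of two different types and prove that only finitely many of each type are needed. Establishing termination and the absence of a residual polygonal symmetry is a finite but delicate computation, best carried out with a computer algebra system on a case-by-case basis; I expect that this is precisely where the values $n\notin\mathcal{N}$ (such as $n=12,18,20,\dots$) produce genuine obstructions, consistent with Proposition \ref{lem:indexinf} cited above. Once the algorithm terminates and the fundamental polygon $P_n$ is symmetry-free, the proof of Proposition \ref{prop:picard1} is complete.
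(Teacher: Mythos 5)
Your proposal does not constitute a proof: the entire content of the proposition is the case-by-case verification for $n\in\mathcal{N}$, and you defer exactly that step (``best carried out with a computer algebra system\dots I expect that this is precisely where\dots''). Beyond the missing computation, the Vinberg-type strategy you sketch has a structural problem. The generators of $\rW(\mathbf{L_n})$ are of three geometric types on $\mathbb{H}^2$: reflections $s_\delta$ with $\delta^2=-2$ act as reflections in geodesics; reflections $s_\delta$ with $\delta^2=+2$ fix a single point and act as elliptic half-turns, so they contribute no wall at all; and a Huybrechts involution $\psi_{b,a}$ acts as a geodesic reflection only because it agrees with $-s_w$ for a negative-norm fixed vector $w$, i.e.\ only modulo $-\id$, which for $n>1$ does not lie in $\widetilde{\rO}(\mathbf{L_n})$. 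A fundamental-polygon/semidirect-product argument of the form $\widetilde{\rO}=\rW\rtimes\mathrm{Sym}(P_n)$ therefore does not apply off the shelf to this mixed generating set (note also that $\rW(\mathbf{L_n})$ is defined relative to a chosen splitting $\rU\oplus\Sigma$, so even its normality in $\widetilde{\rO}(\mathbf{L_n})$ requires an argument). Your opening claim that $\rR(\mathbf{L_n})$ and $\rW(\mathbf{L_n})$ are ``visibly'' of finite index is also unjustified -- for $\rR$ it is false in general by Proposition \ref{lem:indexinf} -- and the appeal to Theorem \ref{thm:kne3}~$(2')$ plays no role in the end, as you yourself note that strong approximation fails in rank $3$.

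For comparison, the paper's route avoids building a fundamental domain for $\rW$ altogether: for $n\in\{1,\dots,5,7,9,13,25\}$ it quotes Nikulin's reflectivity results, and for the remaining $n\in\mathcal{N}$ it uses Shimada's algorithm to produce a \emph{finite} list of explicit isometries generating $\widetilde{\rO}^+(\mathbf{L_n})/\rR^-(\mathbf{L_n})$, then decomposes each such matrix by hand into reflexive involutions via Wall's reduction procedure (iteratively shrinking the $e$- and $f$-coordinates of $g(e)$ by $(\pm2)$-reflections until $g(e)=-e$, at which point $g$ is a Huybrechts involution); the element $s_{e+f}$ handles the index-two passage from $\widetilde{\rO}^+$ to $\widetilde{\rO}$. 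If you want to pursue your geometric picture, you would first have to prove a presentation theorem for groups generated by wall reflections, half-turns, and ``anti-reflections'' simultaneously, which is a substantial piece of work not present in your sketch; the paper's matrix-reduction approach sidesteps this entirely.
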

\begin{proof}
  When $n=1$-$5, 7, 9, 13, 25$,  this was already shown in  \cite{Nik99}.  For the remaining values in $\mathcal{N}$, one can  get a finite set of isometries $g_i\in \widetilde{\rO}(\mathbf{L_n})$ which generate the quotient group $\widetilde{\rO}^+(\mathbf{L_n})/ \rR^-(\mathbf{L_n})$ by using Shimada's algorithm in \cite{Shi15}.  By adapting the reduction method of Wall in \cite[Section 5]{Wall63}, we can find explicit decomposition of $g_i$ as product of reflective involutions, which implies $\widetilde{\rO}(\mathbf{L_n})=\rW(\mathbf{L_n})$ since $s_{e+f}\in \rW(\mathbf{L_n})$ generates $\widetilde{\rO}(\mathbf{L_n})/ \widetilde{\rO}^+(\mathbf{L_n})$. 
  
Let us treat the case when  $n=10$ while the computations of other cases are similar.
After running  the computer program via Shimada's algorithm \cite{Shi15}, we can  obtain a set $\{g_1, g_2\}$ of  generators of  $g_i\in \widetilde{\rO}(\mathbf{L_n})$, where $g_1=\rE_{L}$ is an Eichler  transvection,  $g_2$ is defined by the gram matrix  $$g_2={\left(\begin{array}{ccc} 
81 & 160 & 720 \\
40 & 81 & 360 \\
18 & 36 & 161
\end{array} \right)} .$$   
Note that $g_2(e)=81 e+40 f+18 L$. 
Then Wall's reduction method is to decrease the absolute value of the coefficients of $e$ and $f$  in $g_2(e)$ via  composing with reflections around vectors of square $\pm 2$. In our situation, by searching  $\pm 2$ vectors with coordinates bounded by a reasonably large number, we find $v_1=13e+7f+3L$ and 
$$s_{v_1}\circ g_2(e)=-10 e-9f-3L.$$
Repeating this process, we get $v_2=3e+3f+L$ with  $$s_{v_2}\circ s_{v_1}\circ g_2(e)=s_{v_2}(-10 e-9f-3L)=- e.$$  Then $s_{v_2}\circ s_{v_1}\circ g_2$ is a Huybrechts involution, which shows $g_2\in \rW(\mathbf{L_n})$.  See \cite[page 1-3]{link} for the computation of other values of $n$. \end{proof}

Unfortunately,  there do exist infinitely many integers  $n$ such that  $$\rW^{\pm}(\mathbf{L_n})\neq \widetilde{\rS\rO}^{\dagger}(\mathbf{L_n}),$$
is not a congruence subgroup.  This is due to the following observation
\begin{proposition}\label{lem:indexinf}
The index $[\rO(\mathbf{L_n}): \rR(\mathbf{L_n})]=\infty$  if the following conditions hold:
\begin{enumerate}
    \item for each $v\in \mathbf{L_n} $ with $v^2=2$, $v^\perp$ is isomorphic to  $\left< -2\right>\oplus \left< -2n\right>$. 

    \item $\mathbf{L_n}$ is not reflective of elliptic type in the sense of Nikulin \cite{Nik99}. 
\end{enumerate}
 As a consequence,  we have  $[\rO(\mathbf{L_n}): \rR (\mathbf{L_n})]=\infty$ when $n=37d^2$ for any $d\geq 1$. 
\end{proposition}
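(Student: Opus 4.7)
The plan is to argue geometrically on the hyperbolic plane $\mathbb{H}^2 = \{x \in \mathbf{L_n} \otimes \RR : x^2 > 0\}/\RR^\times$, on which $\rO(\mathbf{L_n})$ acts by isometries since $\mathbf{L_n} = \rU \oplus \langle -2n \rangle$ has signature $(1,2)$. The Weyl group $W^- := \rR^-(\mathbf{L_n})$ is generated by the hyperbolic reflections in the geodesics $e^\perp \cap \mathbb{H}^2$ attached to primitive $(-2)$-vectors, while the $(+2)$-reflections $s_v$ act as rotations by $\pi$ about $[v] \in \mathbb{H}^2$. Condition~(2) asserts, in Nikulin's terminology, that $W^-$ has infinite covolume in $\mathbb{H}^2$, hence $[\rO(\mathbf{L_n}) : W^-] = \infty$; one then has to show that adjoining the $(+2)$-reflections does not drop this to a finite index.

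The decisive tool is condition~(1). For primitive $v \in \mathbf{L_n}$ with $v^2 = 2$, the splitting $v^\perp \cong \langle -2 \rangle \oplus \langle -2n \rangle$ yields primitive $e_v, f_v \in v^\perp$ of squares $-2$ and $-2n$, and a direct computation shows that the involution $\tau_v := s_{e_v} s_v = s_v s_{e_v}$ is precisely the hyperbolic reflection of $\mathbb{H}^2$ across the geodesic $f_v^\perp \cap \mathbb{H}^2$ (using that $f_v^\perp$ has signature $(1,1)$, and that the product of a $\pi$-rotation at $[v]$ with a reflection across any geodesic through $[v]$ is the reflection across the perpendicular geodesic through $[v]$). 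Combining $s_v = s_{e_v} \cdot \tau_v$ with $s_{e_v} \in W^-$ gives
\begin{equation*}
\rR(\mathbf{L_n}) = \bigl\langle W^-,\ \tau_v \,:\, v \in \mathbf{L_n},\ v^2 = 2 \bigr\rangle,
\end{equation*}
so $\rR(\mathbf{L_n})$ is itself realized as a hyperbolic reflection group on $\mathbb{H}^2$, now generated by reflections across geodesics coming from vectors of two norms, namely $-2$ and $-2n$.

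The crucial step is to check that this enlarged reflection group still has infinite covolume. The plan is to run Vinberg's algorithm on $\mathbf{L_n}$ with allowed root norms $\{-2, -2n\}$ and observe non-termination under conditions~(1)--(2); alternatively, one invokes the extension of Nikulin's finiteness theorem to mixed-norm reflective hyperbolic lattices, and notes that the family $\rU \oplus \langle -2n\rangle$ with $n = 37 d^2$ does not appear in such an extended classification either. Either route yields $[\rO(\mathbf{L_n}) : \rR(\mathbf{L_n})] = \infty$.

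For the consequence $n = 37 d^2$: condition~(2) is immediate from Nikulin's classification of $(-2)$-reflective hyperbolic lattices \cite{Nik99}, whose list excludes the entire family $n = 37 d^2$. Condition~(1) is a genus calculation: a primitive $v = a e + b f + c L \in \mathbf{L_n}$ with $v^2 = 2$ satisfies $ab - 37 d^2 c^2 = 1$ so $\gcd(a,b) = 1$, and the rank-$2$ negative-definite lattice $v^\perp$ of determinant $4 \cdot 37 d^2$ is pinned to the genus of $\langle -2 \rangle \oplus \langle -2n \rangle$ via the primality of $37$ and a local representation check at $2$ and $37$. The principal obstacle is the extended-reflectivity step above: rigorously ruling out finite covolume for a reflection group generated by roots of two different norms may require a hands-on Vinberg-style computation on $\mathbf{L_n}$ rather than a clean appeal to an existing classification.
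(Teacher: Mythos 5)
Your geometric setup for the first assertion is essentially the right mechanism, and your identity $s_{e_v}s_v=-s_{f_v}$ is exactly the paper's decomposition $-s_v=s_{w_1}s_{w_2}$ with $w_1,w_2\in v^\perp$ of squares $-2$ and $-2n$. But the step you flag as the ``principal obstacle'' --- proving that the reflection group generated by roots of the two norms $-2$ and $-2n$ has infinite covolume --- is not an obstacle at all once condition (2) is read correctly, and your proposal leaves it genuinely unproved (``run Vinberg's algorithm and observe non-termination'' / ``invoke an extension of Nikulin's finiteness theorem'' are not arguments). ``Reflective of elliptic type in the sense of Nikulin'' refers to the \emph{full} reflection group $W$ generated by all integral reflections $s_w$ with $w^2<0$, of every root norm, not just the Weyl group $\rR^-$ of $(-2)$-vectors. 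So condition (2) says $[\rO^+(\mathbf{L_n}):W]=\infty$ outright; since condition (1) puts each $-s_v$ (equivalently your $\tau_v$) inside $W$, one gets $\rR(\mathbf{L_n})\cdot\{\pm 1\}\cap\rO^+\subseteq W$ and the infinite index follows immediately. Your misreading of (2) as a statement about $\rR^-$ alone both creates the spurious difficulty and invalidates your later claim that (2) for $n=37d^2$ ``is immediate from Nikulin's classification of $(-2)$-reflective lattices'': non-$2$-reflectivity does not imply non-reflectivity.

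The second gap is in the consequence. You propose to verify conditions (1) and (2) directly for every $n=37d^2$, with (1) ``pinned to the genus'' by a local computation. This fails: same genus does not give isomorphism for binary forms, and in fact condition (1) is \emph{false} for $d>1$ in general. For example, in $\mathbf{L_{148}}=\rU\oplus\langle-296\rangle$ the vector $v=31e+43f+3L$ has $v^2=2$ and $v^\perp\cong\langle-8\rangle\oplus\langle-74\rangle$, which does not represent $-2$ and hence is not isomorphic to $\langle-2\rangle\oplus\langle-296\rangle$. The paper avoids this entirely: it checks (1) and (2) only for $n=37$ (where $\det(v^\perp)=148$ and the class number is small enough that $\langle 2\rangle\oplus N_2$ fails to embed), and then uses the finite-index sublattice embedding $\mathbf{L_{37d^2}}\hookrightarrow\mathbf{L_{37}}$ (rescaling $L$ by $d$), which gives $\rR(\mathbf{L_{37d^2}})\subseteq\rR(\mathbf{L_{37}})$ and $[\rO(\mathbf{L_{37}}):\widetilde{\rO}(\mathbf{L_{37d^2}})]<\infty$, to propagate the infinite-index conclusion. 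That transfer argument is absent from your proposal and is needed; you should replace your direct verification for $d>1$ with it.
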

\begin{proof}
 For the first assertion, let $G_1$ be the subgroup generated by $\rR(\mathbf{L_n})$ and $-\id$. Set $G_2=G_1\cap \rO^+(\mathbf{L_n})$.  Then $G_2$ is generated by the reflections $s_w$ with $w^2=-2$ and the involutions $-s_{v}$ with $v^2=2$.  Since $\rO^+(\mathbf{L_n})$ has finite index in $\rO(\mathbf{L_n})$,  it suffices to show $[\rO^+(\mathbf{L_n}):G_2]=\infty$. We define $$W\subseteq \rO(\mathbf{L_n})$$  to be the subgroup generated by the integral reflections $s_w$ with  $w^2<0$. By Condition (1),  for every root $v\in \mathbf{L_n}$,  there exist  $w_1,w_2\in v^\perp$ such that $w_1^2=-2$, $w_2^2<0$ and  $$-s_{v_1}=s_{w_1} s_{w_2}.$$ This implies  that  $G_2\subseteq W$. As Condition (2) exactly means $[\rO^+(\mathbf{L_n}):W]=\infty$, we obtain the assertion.

For the second assertion,  we first consider the case $n=37$.  As shown in  \cite{Nik99}, $\mathbf{L_{37}}$ satisfies the condition (2).  Moreover, for any vector  $v\in \mathbf{L_{37}}$ with $v^2=2$,  as $v\oplus v^\perp\neq \widetilde{\NS}(X)$,  $v^\perp$ has to be a rank $2$ negative definite even lattice of determinant $148$. By classification of binary positive definite quadratic forms, $ v^\perp$ is isometric to either $N_1:={\left(\begin{array}{cc} 
-2 & 0 \\
0 & -74 
\end{array} \right)}$ or $N_2:={\left(\begin{array}{cc} 
-4 & 2 \\
2 & -38 
\end{array} \right)}$. However, $\left< 2\right>\oplus N_2$ can not be embedd into $\mathbf{L_{37}}$.  Thus, $v^\perp \cong N_1$.  This shows  $[\rO(\mathbf{L_n}): \rR(\mathbf{L_n})]=\infty$ when $n=37$.  

For $n=37d^2$, there is a natural lattice embedding
\begin{equation}
   \mathbf{L_n}\hookrightarrow  \mathbf{L_{37}}. 
\end{equation}
This induces natural inclusions
$\rR(\mathbf{L_n})\subseteq \rR(\mathbf{L_{37}})$  and $\widetilde{\rO}(\mathbf{L_{n}})\subseteq \rO(\mathbf{L_{37}})$ with $[\rO(\mathbf{L_{37}}):\widetilde{\rO}(\mathbf{L_{n}})]<\infty$. 
As $[\rO(\mathbf{L_{37}}):\rR(\mathbf{L_{37}})]=\infty$, 
it follows that  $[\rO(\mathbf{L_n}):\rR(\mathbf{L_n})]=\infty$ as well. 

\end{proof}

\begin{remark}
There are only finitely many $n$ where  condition (1) holds. On the other hand, condition $(2)$ holds for almost all positive $n\in \ZZ$  (\cf.~ \cite{Nik00}, \cite{All12}). Note that among the integers that appear in Proposition \ref{prop:picard1}, condition $(2)$ does hold for the four integers $n=23, 29,31, 41$. It will be  a very interesting question whether $\rW^{\pm}(\mathbf{L_n})= \widetilde{\rS\rO}^{\dagger}(\mathbf{L_n})$ for infinitely many $n$. 

\end{remark}

\subsection{$K3$ surface of Picard number two}
In the case $\rank~ \NS(X)=2$, we can set $$\NS(X)=\Big(\begin{array}{cc}
  2a   & b \\
   b  & 2c
\end{array}\Big).$$  If $\NS(X)\cong \rU$, Wall has shown in \cite[Theorem 6.12]{Wall63} that 
\begin{equation}\label{eq:wall}
    \rW^\pm(\rU\oplus \rU)=\rS\rO^{\dagger}(\rU\oplus \rU). 
\end{equation}
This yields 
\begin{proposition}\label{prop:unigonal}
  If $X$ is admits a Jacobian fibration,  then  Conjecture \ref{conj2} holds.
\end{proposition}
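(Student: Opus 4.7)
The plan is to verify Question \ref{W=SO} has a positive answer under the Jacobian hypothesis and then invoke Theorem \ref{thm1}. A Jacobian fibration gives $\rU\subseteq \NS(X)$ as an orthogonal direct summand, so I may write $\NS(X)=\rU\oplus N$ and hence $\widetilde{\NS}(X)(-1)=\rU\oplus \Sigma$ with $\Sigma=\rU(-1)\oplus N(-1)$. Via the identification $\widetilde{\rO}^+(\Lambda)=\widetilde{\rO}^\dagger(\Lambda(-1))$, what must be shown is that $\widetilde{\rO}^\dagger(\rU\oplus \Sigma)\subseteq \rW(X)$.

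For $\rank N\geq 1$ (equivalently $\rank\NS(X)\geq 3$), $\Sigma$ is indefinite of rank $\geq 3$ and Corollary \ref{cor:W=SO} gives $\rW^\pm(\rU\oplus \Sigma)=\widetilde{\rS\rO}^\dagger(\rU\oplus \Sigma)$. For $N=0$ (i.e.\ $\NS(X)\cong \rU$), the lattice $\widetilde{\NS}(X)(-1)\cong \rU\oplus \rU$ is unimodular, and Wall's theorem \eqref{eq:wall} applies directly to give $\rW^\pm(\rU\oplus \rU)=\rS\rO^\dagger(\rU\oplus \rU)=\widetilde{\rS\rO}^\dagger(\rU\oplus \rU)$. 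In either case $\rW^\pm$ is, by construction, generated by even products of reflexive involutions (the reflections in $\rR^\pm$ about $(\pm 2)$-vectors and the Huybrechts involutions $\psi_{b,a}$, each of which is reflexive in the sense of Definition \ref{def:rde}), so one obtains the inclusion $\widetilde{\rS\rO}^\dagger(\widetilde{\NS}(X)(-1))\subseteq \rW(X)$.

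Passing from $\widetilde{\rS\rO}^\dagger$ to the full $\widetilde{\rO}^\dagger$ requires a single coset representative of determinant $-1$ with trivial spinor norm, and one is provided by a reflection $s_v$ around a $(+2)$-vector $v$ in $\widetilde{\NS}(X)(-1)$ that acts trivially on the discriminant group (for example $v=e+f$ in one of the hyperbolic summands). This $s_v$ is a type-(1) reflexive involution and so belongs to $\rW(X)$. Combining these observations yields $\widetilde{\rO}^+(\widetilde{\NS}(X))\subseteq \rW(X)$, and Theorem \ref{thm1} then gives $\Phi^{\CH}=\pm\id$ for every (anti-)symplectic autoequivalence; derived anti-autoequivalences are then handled by composing with $\DD$, which acts as the identity on $\CH_0(X)_{\hom}$, reducing to the case just settled.

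The main technical obstacle I anticipate is the spinor-norm and orientation bookkeeping needed to identify $\widetilde{\rO}^+(\widetilde{\NS}(X))$ with $\widetilde{\rO}^\dagger(\widetilde{\NS}(X)(-1))$, together with the verification that Wall's $\rW^\pm(\rU\oplus \rU)$ really is generated, as required by Definition \ref{def:rde}, by products of reflexive involutions; once those identifications are in place, the proposition follows by combining Wall's theorem \eqref{eq:wall}, Corollary \ref{cor:W=SO} and Theorem \ref{thm1}.
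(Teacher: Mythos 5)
Your argument is correct and follows the paper's own route: the paper's (two-line) proof of this proposition likewise observes that a Jacobian fibration forces $\rU\subseteq \NS(X)$ and then combines Wall's identity \eqref{eq:wall} with Theorem \ref{thm1}, the rank $\geq 3$ case being absorbed by Corollary \ref{cor:W=SO} exactly as in your first paragraph. Your extra bookkeeping (the identification $\widetilde{\rO}^+(\Lambda)=\widetilde{\rO}^\dagger(\Lambda(-1))$, the determinant $-1$ coset via $s_{e+f}$, and the reduction of anti-autoequivalences via $\DD$) only makes explicit what the paper leaves implicit.
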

\begin{proof}
Recall that $X$ is admits a Jacobian fibration if and only if  $\NS(X)$ contains a hyperbolic lattice $\rU$. The assertion follows from Theorem \ref{thm1} and Wall's result.  
    
\end{proof}

When $\NS(X)$ is not unimodular, we currently do not have a counterexample where $\rW(\NS(X))$ is not a congruence subgroup. Alternatively, we may consider the automorphism group of $X$ which is well-understood via its N\'eron-Severi lattice.  
It has been shown in \cite{GLP10} that the automorphism group $\Aut(X)$ is   one of the three cases
\begin{enumerate}
    \item $|\Aut(X)|<\infty$ 
     \item $\Aut(X)\cong \DD_\infty$ is an infinite dihedral group.
     \item $\Aut(X)\cong \ZZ$  is an infinite cyclic group. 
\end{enumerate}
 In case (1) and (2),  Bloch's conjecture holds for $\Aut^s(X)$ because $\Aut^s(X)$ is either trivial or generated by anti-symplectic involutions whose action on $\CH_0(X)_{\hom}$ is $-\id$. The challenging problem is case (3), where the N\'eron-Severi lattice of $X$ can be classified as below. 
\begin{lemma}
   $\Aut(X)\cong \ZZ$ if and only if $\NS(X)$ does not contain vectors of square $0$, $\pm 2$.   
\end{lemma}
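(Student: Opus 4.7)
The plan is to reduce the statement to a computation inside $\mathrm{O}^+(\NS(X))$ via the strong Torelli theorem: a Hodge isometry of $\rH^2(X,\ZZ)$ preserving the ample cone is induced by a unique element of $\Aut(X)$, and for $X$ of Picard rank~$2$ the only Hodge isometries of the transcendental lattice $\rT(X)$ are $\pm\id$. Therefore $\Aut(X)$ is commensurable with the subgroup $G\subseteq\mathrm{O}^+(\NS(X))$ of nef-cone-preserving isometries whose action on the discriminant group $\NS(X)^\vee/\NS(X)$ agrees, under the gluing $\NS(X)^\vee/\NS(X)\cong \rT(X)^\vee/\rT(X)$, with $\pm\id$ on the transcendental side.

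For the ``only if'' direction I would rule out $\Aut(X)\cong\ZZ$ under the presence of any class of square $0$ or $\pm 2$. If $\NS(X)$ contains an isotropic class, then $\NS(X)\otimes\QQ$ is a hyperbolic plane whose two isotropic rays are rational; any integer isometry preserving them has eigenvalues in $\{\pm 1\}$, so $\mathrm{O}^+(\NS(X))$ is finite and $\Aut(X)$ is finite. If $\NS(X)$ contains a $(-2)$-class, Riemann--Roch produces a nodal curve whose orthogonal cuts off a rational extremal ray of the nef cone, and any infinite-order isometry fixing both extremal rays is forced to have eigenvalues $\pm 1$, so $\Aut(X)$ is again finite. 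Finally, if $\NS(X)$ contains a $(+2)$-class but no $(-2)$-class, the nef cone coincides with the positive cone and this class is ample; the associated double cover $X\to\PP^2$ supplies the Beauville involution as an order-$2$ element of $\Aut(X)$.

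For the ``if'' direction I would assume $\NS(X)$ represents none of $\{0,\pm 2\}$. The absence of an isotropic class forces the discriminant of $\NS(X)$ to be non-square, so Pell's equation yields an infinite-order $\phi\in\mathrm{O}^+(\NS(X))$. The absence of $(-2)$-classes means the nef cone equals the positive cone, so $\phi$ preserves it; after replacing $\phi$ by a power acting trivially on the finite group $\NS(X)^\vee/\NS(X)$, it extends by $\id_{\rT(X)}$ to a Hodge isometry of $\rH^2(X,\ZZ)$ and, by Torelli, lifts to a symplectic automorphism of infinite order. To conclude $\Aut(X)\cong\ZZ$ it remains to exclude involutions in $G$. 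A non-scalar involution $\iota$ of $\NS(X)$ preserving the positive cone has orthogonal $\pm 1$-eigenlines $\QQ v_{\pm}$ with $v_{\pm}$ primitive and $v_+^2>0>v_-^2$; lifting $\iota$ to an (anti)symplectic Hodge isometry of $\rH^2(X,\ZZ)$ forces it to act as $\pm\id$ on $\NS(X)^\vee/\NS(X)$, and a short computation then yields $v_+^2\mid 2$ or $|v_-^2|\mid 2$, i.e., $\{v_+^2,v_-^2\}\cap\{\pm 2\}\neq\emptyset$, contrary to the hypothesis. Higher-order torsion in $G$ does not occur because the identity component $\mathrm{SO}^+(1,1,\RR)$ is torsion-free, so once involutions are ruled out, $G$ is torsion-free and its infinite cyclic subgroup of Pell units exhausts it, giving $\Aut(X)\cong\ZZ$.

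The hard part will be the involution analysis in the last step: translating the Hodge-theoretic requirement that $\iota$ lift to $\rH^2(X,\ZZ)$ compatibly with $\pm\id_{\rT(X)}$ into the purely arithmetic condition $\{v_+^2,v_-^2\}\cap\{\pm 2\}\neq\emptyset$ requires keeping track of the possible index-$2$ overlattice structure of $\ZZ v_+\oplus\ZZ v_-\subseteq\NS(X)$ and exploiting the rigidity of $\rT(X)$ as a Hodge structure of Picard rank~$2$.
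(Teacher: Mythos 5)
Your argument is much more self-contained than the paper's, which simply invokes the trichotomy of Galluzzi--Lombardo--Peters (finite, $\DD_\infty$, or $\ZZ$) together with the ``well-known'' facts that $\Aut(X)$ is finite iff $\NS(X)$ represents $0$ or $-2$ and that, when infinite, it contains an involution iff $\NS(X)$ contains an ample class of square $2$. Your ``only if'' direction is sound, and I checked that the key arithmetic claim in your involution analysis is correct: a non-scalar involution of $\NS(X)$ preserving the positive cone and acting as $\pm\id$ on $\NS(X)^\vee/\NS(X)$ does force $v_+^2=2$ or $v_-^2=-2$, in both the split case $\NS(X)=\ZZ v_+\oplus\ZZ v_-$ and the index-$2$ overlattice case (where $(v_++v_-)/2\in\NS(X)$).

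There is, however, a genuine gap in the ``if'' direction, located in your opening sentence and resurfacing at the very end. You assert that for Picard rank $2$ the only Hodge isometries of $\rT(X)$ are $\pm\id$; this is false in general (the group of Hodge isometries of $\rT(X)$ is cyclic of order $m$ with $\varphi(m)\mid 20$, and e.g.\ order-$3$ non-symplectic automorphisms acting trivially on a rank-$2$ Picard lattice do exist). Consequently ``$\Aut(X)$ is commensurable with $G$'' is all you actually have, and commensurability is not enough: $\ZZ\times\ZZ/m$ is commensurable with $\ZZ$, so showing $G\cong\ZZ$ and $G$ torsion-free does not rule out torsion in the kernel of $\Aut(X)\to\rO(\NS(X))$, which consists of finite-order purely non-symplectic automorphisms acting trivially on $\NS(X)$. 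You must kill this kernel. This can be done under your hypothesis, but it needs an argument you have not supplied: such an automorphism of prime order $p$ satisfies $p-1\mid\rank\rT(X)=20$, so $p\in\{2,3,5,11\}$, and its invariant lattice --- which here equals $\NS(X)$ since $\rT(X)_\QQ$ is an irreducible Hodge structure --- is $p$-elementary of rank $2$ and signature $(1,1)$; running through the possibilities ($\rU$, $\rU(p)$, and for $p=2,5$ the determinant-$(-p)$ or $2$-elementary lattices) one finds each represents $0$ or $\pm 2$, contradicting the hypothesis. Without this step (or an appeal to the trichotomy, which already packages it), the conclusion $\Aut(X)\cong\ZZ$ does not follow from your argument.
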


\begin{proof}
 It is well-known that   $\Aut(X)$ is a finite group if and only if $\NS(X)$ contains an isotropic vector or a root.  When $\Aut(X)$ is infinite, it contains an anti-symplectic involution if and only if $\NS(X)$ contains an ample line bundle of degree $2$. 
\end{proof}

Set $D=(b^2-4ac)>0.$ Then one has

\begin{theorem}\cite[Main Theorem]{HKL20}\label{thm:aut-pic2}
An element $g\in \Aut(X)$ is symplectic of infinite order if  and only if its action on Picard lattice $g^\ast |_{\NS}:\NS(X)\to \NS(X)$ is of the form 
\begin{equation}\label{eq:sym}
g_{u,v}= \left(\begin{array}{cc}
    \frac{u-bv}{2}  &-cv  \\
    av  & \frac{u+bv}{2}
 \end{array}\right),
\end{equation} 
where  $u=\alpha^2-2$, $v=\alpha\beta$ and $(\alpha, \beta) $ is a positive solution of the Pell equation $x^2-Dy^2 =4$ with $\alpha\geq 4$.
\end{theorem}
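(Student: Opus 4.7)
The plan is to combine the global Torelli theorem with an explicit parametrization of the stable special orthogonal group of $\NS(X)$ in terms of solutions of a Pell equation. I will argue in both directions.

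For the necessary direction, let $g \in \Aut^s(X)$ be symplectic of infinite order. Since $g$ fixes the holomorphic $2$-form, the induced action on the transcendental lattice is trivial, $g^*|_{\rT(X)} = \id$, because $\rT(X)\otimes\QQ$ is an irreducible rational Hodge structure. Via the gluing isomorphism $\NS(X)^\vee/\NS(X)\cong \rT(X)^\vee/\rT(X)$ coming from the unimodularity of $\rH^2(X,\ZZ)$, this forces $g^*|_{\NS(X)}$ to act trivially on the discriminant group, i.e., to lie in $\widetilde{\rO}(\NS(X))$. Combined with orientation-preservation on the positive cone, this gives $g^*|_{\NS(X)}\in \widetilde{\rS\rO}^+(\NS(X))$.

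Next, I would parametrize this group explicitly. Writing a general isometry as $g^* = \bigl(\begin{smallmatrix} p & q \\ r & s \end{smallmatrix}\bigr)$ with respect to the given basis, the equation $g^{*T} G g^* = G$ together with $\det g^* = 1$ yields, after setting $u := p+s$, the shape
\[
g^* = \begin{pmatrix} (u-bv)/2 & -cv \\ av & (u+bv)/2 \end{pmatrix}
\]
for some integer $v$, with the constraint $u^2 - Dv^2 = 4$, where $D = b^2 - 4ac$. A direct check shows that every matrix of this shape acts trivially on the discriminant group, accounting for all of $\widetilde{\rS\rO}^+(\NS(X))$. To isolate the subgroup arising from symplectic (rather than anti-symplectic) automorphisms of infinite order, one identifies it as the index-$2$ subgroup of squares: if $(\alpha,\beta)$ is a positive solution of $x^2 - Dy^2 = 4$, the identity $\bigl((\alpha+\beta\sqrt{D})/2\bigr)^2 = (u+v\sqrt{D})/2$ in the relevant quadratic order gives exactly $u = \alpha^2 - 2$, $v = \alpha\beta$.

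For the converse, given $g_{u,v}$ in this form, the Hodge isometry $g_{u,v}\oplus \id_{\rT(X)}$ of $\rH^2(X,\ZZ)$ exists by the gluing property. Since $\Aut(X)\cong\ZZ$ forces $\NS(X)$ to contain no $(-2)$-classes and no isotropic vectors, the ample cone coincides with one connected component of the positive cone in $\NS(X)_\RR$; hence $g_{u,v}$ preserves the ample cone and lifts, by the global Torelli theorem, to an automorphism of $X$ that is symplectic by construction and of infinite order since its trace $u$ on $\NS(X)$ satisfies $|u|>2$, making the characteristic polynomial $\lambda^2 - u\lambda + 1$ have no roots of unity among its roots.

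The main obstacle is justifying the precise numerical condition $\alpha\geq 4$. The naive infinite-order requirement $|u|>2$ only gives $\alpha\geq 3$, and genuine integer solutions with $\alpha=3$ do exist (for instance $D=5$, $(\alpha,\beta)=(3,1)$). Excluding these requires either showing that the resulting isometry is realized by an anti-symplectic automorphism of infinite order rather than a symplectic one (via a spinor-norm computation on the glued $\rH^2(X,\ZZ)$), or that such sporadic cases fail to preserve the ample cone for the corresponding $\NS(X)$. This is precisely the case-by-case analysis carried out in \cite{HKL20}, on which the statement relies.
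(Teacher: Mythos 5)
A preliminary remark: the paper does not prove this statement. It is imported verbatim from \cite[Main Theorem]{HKL20} (``Due to [Main Theorem, HKL20], one has\dots''), so there is no internal proof to compare yours against; your argument has to stand on its own.

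Your outline --- symplectic forces $g^*|_{\rT(X)}=\id$, hence trivial action on the discriminant group; Pell-equation parametrization of the proper isometries of a binary form; global Torelli for the converse --- is the natural route, but it breaks at exactly the point where the content of the theorem sits. The claim that ``every matrix of this shape acts trivially on the discriminant group, accounting for all of $\widetilde{\rS\rO}^+(\NS(X))$'' is false. Computing $(g_{u,v}-\id)G^{-1}$ for $G=\bigl(\begin{smallmatrix}2a&b\\ b&2c\end{smallmatrix}\bigr)$ gives $\tfrac{-1}{D}\bigl(\begin{smallmatrix}c(u-2)& -b(u-2)/2+Dv/2\\ -b(u-2)/2-Dv/2 & a(u-2)\end{smallmatrix}\bigr)$, which is integral only under nontrivial congruences on $u-2$ modulo $D$. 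The paper's own Oguiso example refutes the claim: for $\NS(X)=\bigl(\begin{smallmatrix}4&2\\2&-4\end{smallmatrix}\bigr)$ one has $D=20$ and fundamental solution $(u,v)=(18,4)$; the resulting isometry $\bigl(\begin{smallmatrix}5&8\\8&13\end{smallmatrix}\bigr)$ acts as $-1$, not $+1$, on the order-$20$ discriminant group, and indeed the generator of $\Aut(X)\cong\ZZ$ there is anti-symplectic. If your claim were correct, your converse step would lift \emph{every} $g_{u,v}$ to a symplectic automorphism, i.e.\ prove that all infinite-order automorphisms are symplectic --- contradicting both the theorem (for $(18,4)$ one would need $\alpha^2=20$) and that example. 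The actual content of the statement is that the $g_{u,v}$ acting trivially on the discriminant group --- equivalently, gluing with $\id_{\rT(X)}$ --- are precisely the squares $\epsilon^2$ of norm-one units $\epsilon=(\alpha+\beta\sqrt{D})/2$; you assert this identification (``the index-$2$ subgroup of squares'') but the only justification offered is the false discriminant computation. Combined with the $\alpha\ge 4$ issue, which you flag yourself and defer back to \cite{HKL20}, the proposal does not establish the theorem.
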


Due to Theorem \ref{thm:aut-pic2},  we are motivated to consider a concrete question:  when is the diagonal transformation $$\id_{\rU}\oplus g_{u,v} \in \rS\rO(\widetilde{\NS}(X))$$  lying in $\rW(\widetilde{\NS}(X))$? 
A positive answer leads to a solution of Bloch's conjecture for $\Aut^s(X)$. Though we suspect it will fail in general, a quick computer search (see \cite[page 4-143]{link}) gives the following result  

\begin{proposition}
Bloch's conjecture holds for  $\phi\in \Aut^s(X)$ if $\NS(X)\cong \Sigma(d)$ for some even lattice $\Sigma$ with $|\det(\Sigma)|\leq 100$ and $|d|\leq 6$ except $\Sigma=  \left(\begin{matrix}
    -2 & 9 \\
    9 & 4
    \end{matrix}\right)$, $d=\pm 3, \pm 6$ or  $\Sigma=    \left(\begin{matrix}
    4 & 7 \\
    7  & -4
\end{matrix}\right) $,   $    \left(\begin{matrix}
    -2 & 9 \\
    9  & 8
\end{matrix}\right) $,  $    \left(\begin{matrix}
    -2 & 7 \\
    7  & 8
\end{matrix}\right) $ and  $d=\pm 5$.

\end{proposition}

\begin{example}A typical example is from Oguiso (\cf.~\cite[Theorem 6.6]{Ogu14}). Suppose $$\NS(X)= \left(\begin{array}{cc}
    4  & 2  \\
    2  & -4 
 \end{array}\right)
$$ 
with basis $L_1$ and $L_2$.
Then $\Aut(X)\cong \ZZ$ is generated by an anti-symplectic automorphism $g$ and $g^{\widetilde\rH}=-s_{a_1}\circ s_{a_2}\circ s_{a_3}\circ s_{a_4}$, where $a_1=(1,0,-1)$, $a_2=(1,-L_1,1)$, $a_3=(1,-L_1-L_2,1)$ and $a_4=(1,-2L_1-3L_2,1)$.
\end{example}

\subsection{A reduction of Bloch's conjecture for $\Aut^s(X)$}\label{sec:tde}

At last, we discuss a possible generalization of Theorem \ref{thm:aut-inj}.
\begin{definition}
 A twisted $K3$ surface $\srX\to X$  is {\it essentially trivial} if the associated Brauer class $[\srX]$ is zero in $\Br(X)$.    
\end{definition}
 
In this case, the induced long exact sequence of \eqref{eq:Kummer}  becomes  $$0\rightarrow \Pic(X)/m\Pic(X)\xrightarrow{\delta} \rH^2(X,\bm\mu_m)\rightarrow {\rm Br}(X)[m]\rightarrow 0.$$  As $[\srX]=0$ in $\Br(X)[m]$, there exists $L\in\Pic(X)$ such that $\srX\to X$ corresponds to  $(X,\delta(L))$. One can obtain  a $B$-field lift $B=\frac{c_1(L)}{m}\in \rH^2(X,\QQ)$ of $\delta(L)$. Moreover, there is a derived equivalence 
\begin{equation}\label{eq:etde}
 L^{\frac{1}{m}}\otimes (\_):   \rD^b(X)\cong\rD^{(1)}(\srX),
\end{equation} given by tensoring the twisted line bundle $L^{\frac{1}{m}}$.  The  induced action of \eqref{eq:etde} on  $\CH^*(X)_\QQ$  is given by 
\begin{equation} \label{esstrivial}
\begin{aligned}
    {\bf exp}({\frac{c_1(L)}{m}}): \CH^*(X)_\QQ &\longrightarrow \CH^*(X)_\QQ\\
    \ch(E)&\mapsto \ch_\srX(E\otimes L^{\frac{1}{m}}).
\end{aligned}
\end{equation}
 Note that the action on $\CH_0(X)_{\rm hom}\otimes\QQ$ is the identity.
Then Bloch's conjecture predicts

\begin{conjecture}\label{conj:tde}
Let $\Aut^{\rm tde}(\CH^\ast(X)_\QQ)\subseteq \Aut(\CH^\ast(X)_\QQ)$  be the 
subgroup generated by actions induced by derived autoequivalences and ${\bf exp}({\frac{c_1(L)}{m}})$ for $L\in \Pic(X)$ and $m\in \ZZ.$  The induced map \begin{equation}\label{tdeinj}
    \Aut^{\rm tde}(\CH^\ast(X)_\QQ)\rightarrow \Aut(\widetilde\rH(X,\QQ)).
\end{equation}  is injective. 
\end{conjecture}

When the autoequivalence $\Phi$ acts diagonally on $$\widetilde{\rH}(X)=\rU\oplus \rH^2(X,\ZZ),$$ there is another way to decompose the action of $\Phi^{\CH}$. According to the Cartan-Dieudonne decomposition,  the action of $\Phi$ on   $\NS(X)$ can be composed into a product of rational reflections:  $$\Phi^{\NS}=\prod \limits_{i=1}^{m} s_{b_i},$$  for some vectors  $b_i\in \NS(X)$.
   If $\Phi$ is symplectic, the decomposition can be further extended to $\rH^2(X,\QQ)$ as rational Hodge isometries. Moreover, we have

\begin{lemma}\label{lem:CDdec}
 If the spinor norm of $\prod\limits_{i=1}^{2n} s_{b_i}$ in $\QQ^\times/(\QQ^{\times })^2$ is $\pm 1$, then there exist vectors $$c_1,\ldots,c_{2n}\in \NS(X)$$ such that  
 \begin{equation}
 \prod\limits_{i=1}^{2n} s_{b_i}=\prod\limits_i^{2n} s_{c_i},
 \end{equation}
 and \begin{equation}\label{eq:squreeq}
     \prod_{i=1}^n q(c_{2i-1})=\pm \prod_{i=1}^n q(c_{2i}).
 \end{equation}
\end{lemma}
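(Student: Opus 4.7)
The approach is purely arithmetic: we will leave the decomposition essentially as given and only rescale two of the $b_i$ by positive integers in order to force the relation \eqref{eq:squreeq}. The key observation is that rescaling $b_i \leadsto m_i b_i$ with $m_i \in \ZZ_{>0}$ keeps $c_i := m_i b_i$ inside $\NS(X)$ and does not change the reflection $s_{c_i} = s_{b_i}$, while multiplying $q(c_i)$ by the square $m_i^2$. Thus the whole problem reduces to solving one quadratic equation over $\QQ$.

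First I would set
\[
A := \prod_{i=1}^n q(b_{2i-1}), \qquad B := \prod_{i=1}^n q(b_{2i}),
\]
so that $AB = \prod_{i=1}^{2n} q(b_i)$. By the definition of the spinor norm on $\rO(\NS(X)_\QQ)$ recalled in \S\ref{sec:kneser}, the hypothesis that $\mathrm{spin}\bigl(\prod_i s_{b_i}\bigr) = \pm 1$ in $\QQ^\times/(\QQ^\times)^2$ is exactly the statement that $AB = \pm r^2$ for some $r \in \QQ^\times$. In particular
\[
\frac{B}{A} = \pm \frac{r^2}{A^2} = \pm \left(\frac{r}{A}\right)^{\!2}
\]
is $\pm$ a rational square.

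Next I would write $\sqrt{|B/A|} = p/q$ with $p,q$ coprime positive integers, so that $B/A = \pm (p/q)^2$ with a sign inherited from that of $AB$. Define
\[
c_1 := p\, b_1,\qquad c_2 := q\, b_2, \qquad c_i := b_i \ (i\geq 3).
\]
All $c_i$ lie in $\NS(X)$ since $b_i \in \NS(X)$ and $p,q \in \ZZ$. Because reflections are invariant under nonzero rescaling of their axis, $s_{c_1} = s_{b_1}$ and $s_{c_2} = s_{b_2}$, hence $\prod_i s_{c_i} = \prod_i s_{b_i}$. Finally,
\[
\frac{\prod_{i=1}^n q(c_{2i-1})}{\prod_{i=1}^n q(c_{2i})}
= \frac{p^2 A}{q^2 B} = \frac{p^2}{q^2}\cdot\frac{A}{B} = \pm 1,
\]
so $\prod_i q(c_{2i-1}) = \pm \prod_i q(c_{2i})$, which is \eqref{eq:squreeq}.

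There is essentially no obstacle here: the statement is almost a formal consequence of the spinor norm computation plus the freedom to rescale axes of reflections. The only mild point to note is that we do not need to touch the remaining $b_3,\dots,b_{2n}$ at all, nor to change the total number of reflections, and that the sign in \eqref{eq:squreeq} is exactly the sign of the spinor norm of $\prod_i s_{b_i}$ in $\QQ^\times/(\QQ^\times)^2$. In particular the lemma is sharp: one cannot in general force the ``$+$'' sign in \eqref{eq:squreeq} without enlarging the product.
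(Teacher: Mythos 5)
Your proof is correct and follows essentially the same route as the paper: both arguments exploit that $s_{\lambda b}=s_b$ for $\lambda\neq 0$, rescale only $b_1$ and $b_2$ by integers determined by the square-class condition on $\prod_i q(b_i)$, and leave $b_3,\dots,b_{2n}$ untouched (the paper takes $c_1=mb_1$, $c_2=kb_2$ with $m=\prod q(b_{2i})$ and $\prod q(b_i)=\pm k^2$, which is the same computation with a slightly different normalization of the scalars). No gap.
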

\begin{proof}
By definition,   we have 
$\prod\limits_{i=1}^{2n} q(b_{i})=\pm k^2$ for some $k\in \ZZ$.  Set $m=\prod\limits_{i=1}^n q(b_{2i})$, then 
$$\prod_{i=1}^n \frac{q(b_{2i-1})}{q(b_{2i})}=\pm \frac{k^2}{m^2}.$$
Then we can take $c_1=mb_1$, $c_2=kb_2$ and $c_i=b_i$ for $i\geq 3$ which satisfies the condition.  

\end{proof}

 For any $b\in \NS(X)$, the reflection $s_b$  defines  a rational Hodge isometry $$s_b: \rH^2(X,\QQ)\cong \rH^2(X,\QQ).$$  
Let ${B}=\frac{2{b}}{({b},{b})}\in \NS(X)_\QQ$. Then $s_b$  can be reviewed as  an integral  reflection on the sublattice  $$\rH^2(X,\ZZ)_{B}:=\{u\in \rH^2(X,\ZZ)~|~(u, B)\in \ZZ\}.$$
In \cite[\S 1.2]{Huy19},   Huybrechts showed that such reflection $s_{b}$ can be lifted to an integral  Hodge (since $B\in\NS(X)_\QQ$) isometry  $\psi_b:\widetilde{\rH}(X)\to  \widetilde{\rH}(X)$
 via the diagram of Hodge structure morphisms \begin{equation}
\begin{tikzcd}\label{huyinv}
{\rH^2(X,\ZZ)_{B}} \arrow[d, "{\bf exp}(B)", hook] \arrow[rr, "s_{b}"] &  & {\rH^2(X,\ZZ)_{B}} \arrow[d, "{\bf exp}(B)", hook] \\
\widetilde{\rH}(X) \arrow[rr, "\psi_{b}"]                       &  & \widetilde{\rH}(X)                     \end{tikzcd},
 \end{equation}
 where $\psi_b$ is the involution in Definition \ref{def:h-inv}.  A lifting   $\Psi_b\in \Aut(\rD^b(X))$ of   $\psi_b$ will be a  reflective anti-autoequivalence.  Due to  Lemma \ref{lem:rel-involution} and Theorem \ref{thm:maingeo}, we have 
\begin{equation}\label{eq:h=-id}
    \Psi_b^{\CH}=-\id\in \Aut(\CH_0(X)_{\hom}). 
\end{equation}

Then we can obtain a reduction of Bloch's conjecture for automorphisms.

\begin{proposition}
  For any symplectic  (anti-)autoequivalence $\Phi\in \Aut(\rD^b(X))$ whose action on $\widetilde{\rH}(X)$ is diagonal, the map 
\begin{equation}
       \Phi^\CH:\CH_0(X)_{\hom}\to \CH_0(X)_{\hom}, 
   \end{equation} is $\mathrm{id}$ (resp. $\mathrm{-id}$) if the map \eqref{tdeinj} is injective.   
   \end{proposition}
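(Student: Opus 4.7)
The plan is to realize $\Phi^{\widetilde{\rH}}$ as a product, in the image of the map \eqref{tdeinj}, of Huybrechts involutions $\psi_{b_i}$ together with $\widetilde{\rH}$-actions induced by essentially trivial twist equivalences and (in the anti-autoequivalence case) the derived dual $\DD$. Granted such a factorization, injectivity of \eqref{tdeinj} transfers it to the Chow level, where each $\psi_{b_i}$ contributes $-\id$ by \eqref{eq:h=-id}, each twist contributes $\id$ on $\CH_0(X)_{\hom}$, and $\DD$ contributes $\id$; the overall sign is then determined by the parity of the number of $\psi_{b_i}$'s.

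First, since $\Phi$ is symplectic with diagonal action on $\widetilde{\rH}(X) = \rU \oplus \rH^2(X,\ZZ)$, the restriction $\Phi^{\widetilde{\rH}}|_{\rH^2(X,\QQ)}$ fixes $\rT(X)_{\QQ}$ pointwise and induces a rational isometry of $\NS(X)_{\QQ}$. I apply the Cartan--Dieudonn\'e theorem to factor this isometry as $s_{b_1}\circ\cdots\circ s_{b_m}$ with $b_i\in \NS(X)$ of non-zero square. By diagram \eqref{huyinv}, each $s_{b_i}$ lifts to a Huybrechts involution $\psi_{b_i}$ on $\widetilde{\rH}(X)$, realized by the reflexive anti-autoequivalence $\Psi_{b_i}\in\Aut(\rD^b(X))$ constructed in the paragraph following \eqref{huyinv}.

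Second, I compare the composition $\Psi':=\Psi_{b_m}\circ\cdots\circ\Psi_{b_1}$ with $\Phi$. By construction $(\Psi')^{\widetilde{\rH}}$ and $\Phi^{\widetilde{\rH}}$ agree on the image of ${\bf exp}(B)\colon \rH^2(X,\ZZ)_B \hookrightarrow \widetilde{\rH}(X)$ for suitable $B\in\NS(X)_{\QQ}$, so they can only differ by an isometry supported on the hyperbolic summand $\rU$ together with Eichler-type translations. The identity $\psi_0\circ\psi_b=\rE_b$ of Example \ref{exm:prop-invo} identifies this discrepancy with a product of Eichler transformations, each of which is (up to conjugation by spherical twists, which act trivially on $\CH_0(X)_{\hom}$) the $\widetilde{\rH}$-action of an essentially trivial twist equivalence as in \eqref{esstrivial}. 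In the anti-autoequivalence case one further composes with $\DD$ to land in the correct category. This produces an element $\Psi\in\Aut^{\rm tde}(\CH^\ast(X)_{\QQ})$ with $\Psi^{\widetilde{\rH}}=\Phi^{\widetilde{\rH}}$.

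Finally, injectivity of \eqref{tdeinj} gives $\Phi^{\CH}=\Psi^{\CH}=(-1)^m\id$ on $\CH_0(X)_{\hom}$. The parity of $m$ is pinned down by the determinant and spinor norm of $\Phi^{\widetilde{\rH}}$: a symplectic autoequivalence lies in $\rO^+$ with trivial spinor norm, forcing $m$ even, whereas a symplectic anti-autoequivalence (after composing with $\DD$) forces $m$ odd. The main obstacle is Step 2---carefully verifying that the residual $\rU$-discrepancy always factors through essentially trivial twist actions and spherical twists. This requires a case analysis based on $\Phi^{\widetilde{\rH}}|_{\rU}\in\rO(\rU)$, and is precisely where the hypothesis that $\Phi^{\widetilde{\rH}}$ is \emph{diagonal} is crucial, since it guarantees $\Phi^{\widetilde{\rH}}$ preserves the splitting $\rU \oplus \rH^2(X,\ZZ)$ and so the discrepancy is genuinely confined to a finite-type piece.
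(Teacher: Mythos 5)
There is a genuine gap in your Step 2, and it is exactly the point the paper's proof is built to address. The Huybrechts involution $\psi_{b}$ of Definition \ref{def:h-inv} acts on $z\in\rH^2(X,\ZZ)$ by $z\mapsto z+(b,z)e$; in particular, modulo the hyperbolic summand $\rU$, every $\psi_{b_i}$ is the \emph{identity} on $\rH^2(X,\ZZ)$. Hence $(\Psi')^{\widetilde\rH}=\prod\psi_{b_i}$ is trivial on $\NS(X)_\QQ$ modulo $\rU_\QQ$, whereas $\Phi^{\widetilde\rH}$ acts there as $\prod s_{b_i}$, which is generically nontrivial. Since $\rU$-isometries and Eichler transformations also act trivially on $\rH^2$ modulo $\rU$, the discrepancy between $\Psi'$ and $\Phi$ cannot be ``confined to $\rU$ together with Eichler-type translations'': it genuinely moves $\NS(X)_\QQ$. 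The source of the error is that the relation between $\psi_{b_i}$ and $s_{b_i}$ in diagram \eqref{huyinv} holds only after conjugation by ${\bf exp}(B_i)$ with $B_i=b_i/q(b_i)$, and these conjugations are by \emph{different} $B_i$ for each $i$, so they do not telescope in the composition; there is no single ``suitable $B$'' on whose ${\bf exp}(B)$-image the two actions agree.

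The paper's proof repairs this by conjugating each $\Psi_{b_i}$ \emph{individually} by the essentially trivial twisted equivalence $L_i^{1/q(b_i)}\otimes(\underline{~~})$ of \eqref{eq:etde}, producing twisted anti-autoequivalences $\widehat\Psi_{b_i}$ whose cohomological action restricts to $s_{b_i}$ on $\rH^2(X,\QQ)$ and sends $(0,0,1)\mapsto(-q(b_i),0,0)$, $(1,0,0)\mapsto(0,0,-1/q(b_i))$ on $\rU$. Because each factor's $\rU$-action depends on $q(b_i)$, one further needs Lemma \ref{lem:CDdec} (together with the spinor-norm computation) to rescale the $b_i$ so that $\prod_i q(b_{2i-1})=\pm\prod_i q(b_{2i})$ and the composed $\rU$-action is the identity, matching the diagonal hypothesis on $\Phi^{\widetilde\rH}$; your proposal omits this step entirely, and without it the product $\prod\widehat\Psi_{b_i}^{\widetilde\rH}$ would not even equal $\Phi^{\widetilde\rH}$ on $\rU$. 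Once these two corrections are made, the rest of your argument (each factor contributes $-\id$ on $\CH_0(X)_{\hom}$ by \eqref{eq:h=-id}, the twists contribute $\id$ by \eqref{esstrivial}, the number of factors is even, and injectivity of \eqref{tdeinj} transfers the identity to $\Phi^{\CH}$) coincides with the paper's.
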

%whose  action on $\widetilde{\rH}(X)$ is an involution, which sends $\sigma$ to $\sigma$ (resp. $-\sigma$).  Then the action  on zero cycles \begin{proposition}  Any reflective anti-autoequivalence $\Psi_b$ acts on $\CH_0(X)_{\hom}$ as $-\id$.    \end{proposition}

\begin{proof}
The idea is essentially due to the work in  \cite{Huy19}.  By possibly  composing with spherical twists or shifting functors, we may assume that the diagonal action $\Phi^{\widetilde{\rH}}$ is $\id_{\rU}\oplus \Phi^{\NS}\in \rS\rO(\widetilde{\NS}(X))$. 
By using the Cartan-Dieudonne decomposition and Lemma \ref{lem:CDdec},  we obtain  a sequence of derived anti-autoequivalences $$\Psi_{i}:\rD^b(X)\to \rD^b(X)^{\rm op},$$
for $i=1,\ldots, 2m$ with  $2m\leq \rank (\NS(X))$ so that  $\Phi^{\NS}=\prod\limits_{i=1}^{2m}s_{b_i}$.  By \cite[Theorem 6.2]{GMC02}, $\Phi^{\NS}$ is of spinor norm $1$ since it can be extended to an isometry on $\widetilde\rH(X,\ZZ)$. Hence we can choose $b_i\in \NS(X)$ so that $q(b_i)$ satisfies \eqref{eq:squreeq}. 

For $B_i=\frac{b_i}{q(b_i)}\in \NS(X)_\QQ$, let $L_i$ be a line bundle with class $b_i$.
We have essentially trivial derived equivalences $$L_i^{-\frac{1}{q(b_i)}}\otimes (\underline{~~}):\rD^b(X)\rightarrow \rD^{(1)}(\srX_i),$$ as in \eqref{eq:etde}, where $\srX_i\to X$ is a twisted $K3$ surface with a $B$-field $-B_i$. If we let $$\widehat\Psi_{b_i}:=L_i^{-\frac{1}{q(b_i)}}\otimes (\underline{~~})\circ \Psi_{b_i} \circ L_i^{\frac{1}{q(b_i)}}\otimes (\underline{~~}):\rD^{(1)}(\srX_i)\to \rD^{(1)}(\srX_i)^{\rm op},$$
then each derived equivalence $\widehat\Psi_{b_i}$  induces a rational Hodge isometry $$\widehat \Psi_{b_i}^{\widetilde{\rH}}:\widetilde{\rH}(X,\QQ)\to \widetilde{\rH}(X,\QQ).$$
Note that  the restriction $\widehat \Psi_{b_i}^{\rH^2}=s_{b_i}$ on $\rH^2(X,\QQ)$ and  
$\widehat \Psi_{b_i}^{\widetilde{\rH}}$ maps $\rU$ to $\rU$ by 
$$\widehat \Psi_{b_i}^{\widetilde{\rH}}(0,0,1)={\bf exp}(-B_i)\cdot  \Psi^{\widetilde\rH}_{b_i}(0,0,1)={\bf exp}({-B_i})\cdot(-n,-b_i,-1)=(-n,0,0),$$ and 
$$\widehat \Psi_{b_i}^{\widetilde{\rH}}(1,0,0)={\bf exp}(-B_i)\cdot  \Psi^{\widetilde\rH}_{b_i}(1,B,\frac{B_i^2}{2})={\bf exp}({-B_i})\cdot(0,0,-\frac{1}{n})=(0,0,-\frac{1}{n}),$$ where $n=q(b_i).$  Comparing the actions of $\Phi$ and $\prod\limits_{i=1}^{2m} \widehat \Psi_{b_i}$, we have 
    $$\Phi^{\widetilde \rH}=\prod\limits_{i}^{2m} s_{b_i}= \prod\limits_{i=1}^{2m} \widehat \Psi_{b_i}^{\widetilde{\rH}}.$$
    Since $\widehat{\Psi}_{b_i}^{\CH}=-\id$ by \eqref{eq:h=-id}, it follows that $\Phi^{\CH}=\id $ provided \eqref{tdeinj} is injective. 

\end{proof}

\bibliographystyle{plain}
\bibliography{main}

\end{document}